\newtheorem{thm}{Theorem}[section]
\newtheorem{prop}[thm]{Proposition}
\newtheorem{lemma}[thm]{Lemma}
\theoremstyle{remark}
\newtheorem{assump}[thm]{Assumption}
\newtheorem{remark}[thm]{Remark}
\theoremstyle{definition}
\newtheorem{defn}[thm]{Definition}
\numberwithin{equation}{section}
\def\Ham{\mathcal{H}}
\def\dG{\Ham_{\text{dG}}}
\def\M{\Ham_{\text{M}}}
\def\Ang{\mathcal{A}}
\def\h{\mathfrak{h}}
\def\q{\mathfrak{q}}
\def\eig#1#2{\lambda_{#1,#2}}
\def\eigone#1{\eig{1}{#1}}
\def\tpsi{\Psi}
\def\Ro{R_{o}}
\def\Ri{R_{i}}
\def\R{\mathbb{R}}
\def\Oh{\mathcal{O}}
\let\phi=\varphi
\let\epsilon=\varepsilon
\DeclareMathOperator{\curl}{curl}
\DeclareMathOperator{\Div}{div}
\DeclareMathOperator{\dist}{dist}
\DeclareMathOperator{\dom}{Dom}
\DeclareMathOperator{\spec}{Spec}
\title{Lack of diamagnetism and the Little--Parks effect}
\author{S\o ren Fournais}
\author{Mikael Persson Sundqvist}
\address[S\o ren Fournais]{Aarhus University, Department of
  Mathematics, Ny Munkegade 118, 8000 Aarhus C, Denmark}
\email{fournais@imf.au.dk}
\address[Mikael Persson Sundqvist]{Lund University, Department of Mathematical 
Sciences, Lund, Sweden}
\email{mickep@maths.lth.se}
\subjclass[2010]{81Q10; 35PXX,82D55}
\keywords{Eigenvalues, Magnetic Schr\"odinger operator, monotonicity, 
superconductivity, Ginzburg--Landau model}
\begin{document}

\begin{abstract}
When a superconducting sample is submitted to a sufficiently strong external 
magnetic field, the superconductivity of the material is lost. In this paper we 
prove that this effect does not, in general, take place at a unique value of the 
external magnetic field strength. Indeed, for a sample in the shape of a narrow 
annulus the set of magnetic field strengths for which the sample is 
superconducting is not an interval. This is a rigorous justification of the 
Little--Parks effect.
We also show that the same oscillation effect can happen for disc-shaped samples 
if the external magnetic field is non-uniform. In this case the oscillations 
can even occur repeatedly along arbitrarily large values of the Ginzburg--Landau 
parameter $\kappa$.
The analysis is based on an understanding of the underlying spectral theory for 
a magnetic Schr\"{o}dinger operator. It is shown that the ground state energy 
of such an operator is not in general a monotone function of the intensity of 
the field, even in the limit of strong fields.

\end{abstract}

\maketitle

\section{Introduction}
\subsection{Discussion}
We will consider the Ginzburg--Landau model of superconductivity. If a 
$2$-dimensional superconducting sample with Ginzburg--Landau parameter $\kappa$ 
is submitted to a uniform magnetic field of strength $\sigma$, then (by a 
theorem of Giorgi and Phillips~\cite{giph}) there exists a field strength 
$\overline{H_{C_3}(\kappa)}$ such that if 
$\sigma > \overline{H_{C_3}(\kappa)}$, 
then the sample will be in its normal state, i.e. superconductivity is lost 
altogether. It is at first sight natural to expect this phenomenon to mark a 
monotone transition, i.e. to expect that the material is in its superconducting 
(possibly mixed) state for all $\sigma<\overline{H_{C_3}(\kappa)}$.

Indeed, such a monotonicity result has been proved recently in a number of 
geometric situations and in both $2$ and $3$ dimensional 
settings~\cite{fohe3,fohe5,fohe6,fope} in the case where the Ginzburg--Landau 
parameter $\kappa$ is large (it also follows from asymptotic expansions obtained 
in other works such as \cite{MR2496304,MR3073418}). However, Nature does not 
support this monotonicity 
in general. The famous Little--Parks effect~\cite{lipa} shows that for narrow 
cylinders (or $2$D annuli) one has an oscillatory behavior instead of 
monotonicity.\footnote{In connection to the Little--Parks effect one often 
discusses the (solid) disc as another example, where the effect of surface 
superconductivity provides a localization to the boundary and therefore 
effectively introduces non-trivial topology which should give oscillations. 
However, as already the early studies of Saint-James~\cite{saja} show 
(see also~\cite{fohe5}), in the case of the solid disc these oscillations are 
superposed on a linear background and are not strong enough to break the 
monotonicity of the background.}

In this paper we will establish such `oscillatory' effects rigorously in 
different geometric settings.

The lack of monotonicity comes from the topology/geometry of the annulus. It is 
natural 
to ask whether one can get such an oscillatory effect for (non-vanishing) 
magnetic fields defined on domains without topology. From the previous 
investigations \cite{fohe3} we know this to be impossible for a uniform 
magnetic field, but how about more general fields?
The analysis of constant magnetic fields tells us that this question is 
linked to a 
purely spectral problem, namely whether the first eigenvalue of the 
Schr\"{o}dinger operator $(-i\nabla + B {\mathbf F})^2$ is monotone 
increasing in the parameter (strength of the magnetic field) $B$ for 
sufficiently large values of $B$. This property has been called `strong 
diamagnetism' and has been proved for large classes of magnetic fields---it is 
even `generically' satisfied \cite{fohe3,fohe5,fohe6,fope,MR2496304,MR3073418}. 
However, we produce counterexamples in the general case.

\subsection{Ginzburg--Landau theory}
The Ginzburg--Landau theory of superconductivity is based on the energy 
functional
\begin{align}\label{eq:DefGL}
\mathcal{G}_{\kappa,\sigma}(\psi,\mathbf{A})
&=\int_{\Omega} |(-i\nabla+\kappa\sigma \mathbf{A})\psi|^2
-\kappa^2|\psi|^2+\frac{\kappa^2}{2}|\psi|^4\,dx\nonumber \\
&\quad+(\kappa\sigma)^2\int_{\widetilde{\Omega}} |\curl \mathbf{A}-\beta|^2\,dx.
\end{align}
Here $\kappa > 0$ is a material parameter (the Ginzburg--Landau parameter), 
$\sigma\geq 0$ is a parameter measuring the intensity of the external magnetic 
field. The domain $\Omega \subseteq {\mathbb R}^2$ is the part of space occupied 
by the superconducting material. For $\widetilde{\Omega}$ there are two natural 
choices. One can take $\widetilde{\Omega} = {\mathbb R}^2$. That will not be our 
choice here because for reasons of simplicity we want to avoid an unnecessary 
technical complication connected with unbounded domains in ${\mathbb R}^2$ 
(for details on how to handle this issue see \cite{giti,gism}). One 
can also---and that will be our convention here---take $\widetilde{\Omega}$ to 
be the smallest simply connected domain containing $\Omega$, i.e. the union of 
$\Omega$ and all the `holes' in $\Omega$. The function 
$\beta \in L^2(\widetilde{\Omega})$ is the profile of the external magnetic 
field.

In the setting of bounded $\Omega \subset {\mathbb R}^2$ the functional 
$\mathcal{G}_{\kappa,\sigma}$ is naturally defined on 
$(\psi, \mathbf{A}) \in H^1(\Omega) 
\times H^1(\widetilde{\Omega}, {\mathbb R}^2)$.
The functional is immediately seen to be gauge invariant, 
$\mathcal{G}_{\kappa,\sigma}(\psi,\mathbf{A}) 
= \mathcal{G}_{\kappa,\sigma}(\psi e^{-i \kappa \sigma \phi} ,\mathbf{A}
+\nabla \phi)$. 
The vector field ${\mathbf A}$ models the induced magnetic vector potential. 
The function $\psi$ measures the superconducting properties of the material, 
with $|\psi(x)|$ being a measure of the local density of Cooper pairs.

We say that a minimizer $(\psi, \mathbf{A})$ of the Ginzburg--Landau functional 
is trivial if 
$\psi\equiv 0$ and $\curl\mathbf{A}=\beta$. 
In each of the situations we will encounter, the notation $\mathbf{F}$ will be 
reserved for a fixed choice of vector potential with $\curl \mathbf{F} = \beta$.
For trivial minimizers we clearly have
$\mathcal{G}_{\kappa,\sigma}(\psi,\mathbf{A})=0$. For a nontrivial
minimizer the functional must be negative, since one gets from the 
Euler-Lagrange equations of a minimizer that 
\[
\mathcal{G}_{\kappa,\sigma}(\psi,\mathbf{A}) = 
-\frac{\kappa^2}{2} \| \psi \|_4^4,
\]
if $(\psi, \mathbf{A})$ is a minimizer.

We define the set
\[
\mathcal{N}(\kappa):=
\{\sigma>0~\mid~\mathcal{G}_{\kappa,\sigma}
\text{ has a nontrivial minimizer 
$(\psi,\mathbf{A})$}\}.
\]
Following \cite{lupa1} one typically defines the third critical field to be 
given by $\sup \mathcal{N}(\kappa)$, which is finite by \cite{giph}. However, 
unless $\mathcal{N}(\kappa)$ is an interval, this definition is not the only 
natural one to take---see \cite{fohe3,fohebook} for a discussion. We will see below 
that this is not always the case.

\subsection{Oscillations in the third critical field}
Let $\Omega=\{x\in\mathbb{R}^2~\mid~\Ri<|x|<\Ro\}$ denote the annulus with 
inner radius $\Ri$ and outer radius $\Ro$, let $\beta \equiv 1$.
In this case we will write $D = \widetilde{\Omega} = B(0,\Ro)$ i.e. the disc of 
radius $\Ro$.

\begin{thm}\label{thm:main3}
There exists an annulus $\Omega$ and a $\kappa_0>0$ such the set 
$\mathcal{N}(\kappa_0)$ is not an interval.
\end{thm}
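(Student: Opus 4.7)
The plan is to translate the statement about $\mathcal{N}(\kappa_0)$ into a spectral statement about the first eigenvalue $\lambda_1(B)$ of the magnetic Neumann Laplacian $(-i\nabla+B\mathbf{F})^2$ on $\Omega$, and then to exhibit non-monotonicity of $B\mapsto \lambda_1(B)$ using the nontrivial topology of the annulus (this is the essence of the Little--Parks effect).

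\emph{Step 1 (linear--nonlinear equivalence).} For $\kappa$ sufficiently large I would establish the equivalence
\[
\sigma \in \mathcal{N}(\kappa) \iff \lambda_1(\kappa\sigma) < \kappa^2.
\]
The direction ``$\Leftarrow$'' is elementary: inserting the trial pair $(\varepsilon \psi_1, \mathbf{F})$ into $\mathcal{G}_{\kappa,\sigma}$, where $\psi_1$ is a ground state of $(-i\nabla+\kappa\sigma \mathbf{F})^2$, gives negative energy for small $\varepsilon$, so any minimizer is nontrivial. The converse uses the magnetic penalty $(\kappa\sigma)^2\|\curl \mathbf{A}-\beta\|_2^2$ in \eqref{eq:DefGL} to force the induced potential of a minimizer to lie close to $\mathbf{F}$ modulo gauge, reducing the question to the linear spectral problem. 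Both directions, with quantitative control, are available in or adapted from \cite{fohe3,fohebook}.

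\emph{Step 2 (separation of variables).} In the symmetric gauge $\mathbf{F}(x)=\tfrac{1}{2}(-x_2,x_1)$ and polar coordinates, rotational invariance yields the fiber decomposition
\[
\lambda_1(B) = \min_{m \in \mathbb{Z}} \lambda_1^{(m)}(B),
\]
where each $\lambda_1^{(m)}(B)$ is the first Neumann eigenvalue on $(\Ri,\Ro)$ of a one-dimensional operator whose effective potential contains the Aharonov--Bohm type term $(m-Br^2/2)^2/r^2$.

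\emph{Step 3 (oscillations).} Each branch $B\mapsto\lambda_1^{(m)}(B)$ is real-analytic, but the envelope $\min_m$ develops corners each time the optimizing index jumps. The key analytic input is that, for a suitably narrow annulus, between two consecutive corners $\lambda_1$ first strictly decreases and then strictly increases, producing a strict local maximum at some $B_\star$ with $\lambda_1(B_\pm)<\lambda_1(B_\star)$ at nearby points $B_\pm$. In the formal narrow-annulus limit $\Ro-\Ri\to 0$ with $\bar r=(\Ri+\Ro)/2$ fixed, the branches degenerate to $(m-B\bar r^2/2)^2/\bar r^2$, giving an explicit sawtooth envelope of amplitude $1/(4\bar r^2)$; the task is to establish quantitative persistence of this picture for a genuine narrow annulus, with explicit error bounds for the branches and their crossings.

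\emph{Step 4 (conclusion).} Choose the annulus and $\kappa_0$ so that $\lambda_1(B_\pm)<\kappa_0^2<\lambda_1(B_\star)$ and so that the equivalence of step~1 applies. Setting $\sigma_i:=B_i/\kappa_0$, the easy direction of step~1 yields $\sigma_1,\sigma_3\in\mathcal{N}(\kappa_0)$, and the hard direction yields $\sigma_2\notin\mathcal{N}(\kappa_0)$, so $\mathcal{N}(\kappa_0)$ is not an interval.

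\emph{Main obstacle.} The delicate point is threading the needle in step~4: the oscillation amplitude of $\lambda_1$ in the narrow-annulus regime is a geometric constant of order $\bar r^{-2}$, while the error terms in the linear--nonlinear equivalence of step~1 depend on $\kappa_0$. Choosing the annulus geometry and $\kappa_0$ jointly so that the spectral oscillation remains visible above the nonlinear corrections, while the reduction stays accurate uniformly across the window containing $B_\pm$ and $B_\star$, is the main technical coupling to get right.
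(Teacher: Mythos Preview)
Your overall strategy matches the paper's: reduce to the linear spectral problem, decompose in angular momentum, exploit the narrow-annulus limit to produce oscillations of $\lambda_1(B)$, and then thread the level $\kappa_0^2$ through a local maximum of $\lambda_1$. The point where your plan diverges---and where you correctly locate the main difficulty---is the hard direction of Step~1.

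You propose to import the linear--nonlinear equivalence from \cite{fohe3,fohebook}, which in its standard form requires $\kappa$ large for a \emph{fixed} domain. This forces you into the coupling you describe under ``Main obstacle'': to make $\kappa_0$ large you would shrink $\bar r$, but then the domain changes along with $\kappa_0$ and the uniformity of the cited results becomes unclear. The paper sidesteps this entirely. Instead of invoking large-$\kappa$ theory, it obtains the hard direction by an elementary energy estimate valid for \emph{any} fixed $\kappa_0$, with all the smallness coming from $R_o-R_i$. The ingredients are: the maximum principle $\|\psi\|_\infty\le 1$ for Ginzburg--Landau minimizers; H\"older to split $\int_\Omega|\mathbf{A}-\mathbf{F}|^2|\psi|^2 \le \|\mathbf{A}-\mathbf{F}\|_{L^4(D)}^2\|\psi\|_{L^4(\Omega)}^2$; the Sobolev/curl bound $\|\mathbf{A}-\mathbf{F}\|_{L^4(D)}\le C\|\curl\mathbf{A}-1\|_{L^2(D)}$ of Lemma~\ref{lem:curl}; and the trivial estimate $\|\psi\|_{L^4(\Omega)}^2\le |\Omega|^{1/2}$. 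The factor $|\Omega|^{1/2}\sim (R_o^2-R_i^2)^{1/2}$ then absorbs the nonlinear error once one takes $\epsilon=(R_o-R_i)^{1/4}$ in the Cauchy--Schwarz splitting. Consequently one first fixes $\kappa_0$ and $B_0<B_1<B_2$ using the explicit spectrum of the limiting circle operator $\mathcal{A}(B)$, and only afterwards shrinks $R_o-R_i$; no joint tuning is needed.

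In short: your plan is sound, but the ``main obstacle'' dissolves if you replace the large-$\kappa$ machinery by this direct argument exploiting the thinness of $\Omega$.
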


\begin{remark}
The mechanism behind this result is a convergence of the magnetic quadratic 
form on the annulus to the corresponding form on the circle. This convergence 
was already noticed in the works \cite{BR,RS}, where also `annuli' of 
non-uniform width were considered. It is likely that one could deduce 
Theorem~\ref{thm:main3} from these works, however, we prefer to give a simple 
independent proof which also emphasizes the connection to the 
Bohm--Aharonov-effect.
\end{remark}

\begin{remark}
\label{rem:largerkappa}
By shrinking the inner radius $\Ri$ of the annulus, we can get $\kappa_0$ as 
large as we want, since the eigenvalues of the limiting problem will then cross 
at a level $1/(2\Ri)^2$. In particular it is possible to have 
$\kappa_0>1/\sqrt{2}$, which means that Theorem~\ref{thm:main3} also applies 
to superconductors of Type II.
\end{remark}

One may criticize the result of Theorem~\ref{thm:main3} on two accounts. One 
could desire not to have the topology fixed a priori, but rather have it 
generated by localization properties of the minimizer. Also most previous 
mathematical analysis has considered the limit of large values of $\kappa$. 
One can show that for sufficiently large values of $\kappa$ the set $\mathcal{N}(\kappa)$ of a superconducting 
sample in the shape of an annulus will behave as the one of the disc with the same outer 
radius, and it is known that for the disc and with constant magnetic field---for sufficiently large values of 
$\kappa$---$\mathcal{N}(\kappa)$ is indeed an interval \cite{fohe3}.

Our next theorem remedies these defects.

\begin{thm}\label{thm:HC3-oscillation}
Let $\Omega$ be the unit disc in $\R^2$. There exists an everywhere positive 
magnetic field $\beta(x)$ such that for all $\kappa_0>0$ there exists 
$\kappa > \kappa_0$ satisfying that $\mathcal{N}(\kappa)$ is not an interval.
\end{thm}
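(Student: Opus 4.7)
The plan is to adapt the annular mechanism of Theorem \ref{thm:main3} to the disc by replacing the topological hole with a strong potential barrier: I would construct a smooth positive $\beta$ whose flux is concentrated in a small central subdisc, so that for large $B$ the ground state of the linearized operator is repelled from the centre and effectively experiences an Aharonov--Bohm potential. The resulting Little--Parks-type oscillations should then persist at arbitrarily large $B$ and translate, via the usual spectral reduction, into non-interval sets $\mathcal{N}(\kappa)$ at arbitrarily large $\kappa$.

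\textbf{Step 1 (spectral reduction).} By standard Ginzburg--Landau theory, $\sigma \in \mathcal{N}(\kappa)$ iff the lowest eigenvalue $\lambda_1(B)$ of the magnetic Neumann operator $(-i\nabla + B\mathbf{F})^2$ on the unit disc, with $\curl\mathbf{F} = \beta$ and $B = \kappa\sigma$, is strictly less than $\kappa^2$. It therefore suffices to exhibit $\beta > 0$ for which $B \mapsto \lambda_1(B)$ is not monotone at arbitrarily large $B$: at any strict local maximum $B_*$ with $\lambda_1(B_*) > \lambda_1(B_* \pm \delta)$, choosing $\kappa$ just below $\sqrt{\lambda_1(B_*)}$ produces the required value.

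\textbf{Step 2 (construction and fibre decomposition).} I would take $\beta$ radially symmetric and smooth with $\beta \equiv M$ on $B(0,r_0)$ and $\beta \equiv \epsilon$ on $\{r_0 + \eta < r \leq 1\}$, where $0 < \epsilon \ll 1 \ll M$. Set $\Phi(r) = \int_0^r s\beta(s)\,ds$ and $\Phi_0 = \Phi(r_0)$. The operator decomposes into radial fibres $H^{(m)}_B$, $m \in \mathbb{Z}$, on $L^2((0,1), r\,dr)$ with effective potential $(m - B\Phi(r))^2/r^2$, and $\lambda_1(B) = \inf_m \lambda^{(m)}(B)$ where $\lambda^{(m)}(B)$ is the lowest fibre eigenvalue. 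Agmon estimates with weight derived from the effective potential then show that, for $M$ and $B$ large, each relevant fibre ground state is exponentially suppressed in $B(0,r_0)$, so that $\lambda^{(m)}(B)$ is well approximated by an eigenvalue of the Dirichlet--Neumann problem on the annulus $\{r_0 < r < 1\}$ with field $\epsilon$ and Aharonov--Bohm flux $B\Phi_0$ piercing the central hole. On this annulus the mechanism of Theorem \ref{thm:main3} should produce strict local maxima of $\inf_m \lambda^{(m)}(B)$ near the half-integer values of $B\Phi_0/(2\pi)$, and these maxima recur indefinitely as $B \to \infty$.

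The \textbf{main obstacle} is to preserve the oscillation amplitude uniformly in $B$. The eigenvalues $\lambda^{(m)}(B)$ grow with $B$ also from the small field $\epsilon$ in the annulus (a surface contribution of order $\Theta_0 \epsilon B$, where $\Theta_0$ is the de Gennes constant), so one must show that the Aharonov--Bohm oscillation amplitude---independent of $B$ to leading order and determined by $r_0$ and the annular geometry---dominates the linear drift across each period in $B$, whose length is of order $1/\Phi_0$. Since $\Phi_0$ can be made as large as desired by increasing $M$ (independently of $\epsilon$), the drift per period can be made arbitrarily small while the amplitude remains fixed, which should close the argument. Making the Agmon tunnelling estimates quantitative enough to keep the annular approximation valid at every local maximum as $B \to \infty$, so that the oscillations are not eventually washed out by subleading corrections, is the delicate technical point.
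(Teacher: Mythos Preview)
The paper's route is quite different and more direct: it proves a three-term asymptotic expansion (Theorem~\ref{thm:intasymptot}) on the disc for any radial field satisfying Assumption~\ref{ass:Magnetic},
\[
\eigone{\Ham(B)}=\Theta_0\delta B-\tfrac13\phi_{\xi_0}(0)^2(\delta B)^{1/2}+\xi_0\phi_{\xi_0}(0)^2\bigl((\Delta_B^{\text{int}})^2+C_1^{\text{int}}\bigr)+\mathcal O(B^{-1/2}),
\]
with $\delta=\tilde\beta(1)$ and $\Delta_B^{\text{int}}=\dist(\Phi B-\xi_0(\delta B)^{1/2}+C_0^{\text{int}},\mathbb Z)$. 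The third term oscillates with period $\sim 1/\Phi$ and fixed amplitude; your ``drift per period versus amplitude'' heuristic is exactly the comparison of the first and third terms and becomes the explicit flux condition $\Phi>\Theta_0\delta/(\xi_0\phi_{\xi_0}(0)^2)$ of Theorem~\ref{thm:main}. The concrete field is $\beta=\delta+(1-|x|)^2$ with small $\delta$. (Your Step~1 also needs care: $\lambda_1(\kappa\sigma)>\kappa^2$ only makes the normal state a \emph{local} minimizer; the paper quotes \cite[Prop.~13.1.7]{fohebook} for the passage to $\mathcal N(\kappa)$.)

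The gap in your Step~2 is the appeal to ``the mechanism of Theorem~\ref{thm:main3}''. That result, through Theorem~\ref{thm:main2}, concerns a \emph{fixed} $B$ and an annulus of \emph{vanishing} width collapsing to the circle operator $\Ang(B)$. In your setting the effective annulus $\{r_0<r<1\}$ has fixed width and $B\to\infty$; the ground state then does not spread over the annulus but concentrates on a $B^{-1/2}$-neighbourhood of the Neumann boundary $r=1$, and the relevant model is the de~Gennes half-line operator of Appendix~\ref{sec:dG}, not $\Ang(B)$. In particular the oscillation amplitude is the universal constant $\tfrac14\,\xi_0\phi_{\xi_0}(0)^2$, not something ``determined by $r_0$ and the annular geometry'', and the genuine technical point is the boundary expansion, not tunnelling through the central barrier. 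Your field (smooth, equal to $\epsilon$ near $r=1$) in fact satisfies Assumption~\ref{ass:Magnetic} with $\delta=\epsilon$, so Theorem~\ref{thm:intasymptot} applies to it directly and your choice of large central flux and small $\epsilon$ does satisfy the flux condition---but the argument then runs through the paper's de~Gennes expansion, not the thin-annulus reduction you outline.
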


In fact, the magnetic field can be chosen as $\beta(x) = \delta+(1-|x|)^2$, 
where $\delta>0$ is some sufficiently small constant.
Theorem~\ref{thm:HC3-oscillation} follows directly from Theorem~\ref{thm:main} 
(or Theorem~\ref{thm:intasymptot}) below using \cite[Prop. 13.1.7]{fohebook}.
Actually, it easily follows from Theorem~\ref{thm:intasymptot} below, that for 
all integers $n>0$ we can choose $\delta$ so small that ${\mathcal N}(\kappa)$ 
will consist of at least $n$ intervals for all $\kappa$ sufficiently large.

\subsection{Lack of strong diamagnetism}\mbox{}\par

For easy reference we collect the notation and assumptions concerning the magnetic fields that we will treat.
We will work on an open set $\Omega$ being one the following three cases $\Omega \in \{ {\mathbb R}^2, B(0,1),  {\mathbb R}^2 \setminus \overline{B(0,1)}\}$.

\begin{assump}\label{ass:Magnetic}
Suppose that $\beta(x) = \tilde{\beta}(|x|) \in L^{\infty}_{\text{loc}}(\Omega)$, 
is a non-negative, radial magnetic field, possessing five continuous derivatives
in an open neighborhood $U$ of the unit circle $\{x\in\R^2: |x|=1\}$.
Define
\begin{align}
\delta := \tilde{\beta}(1) \geq 0,
\end{align}
and assume that $\tilde\beta'(1)=0$ and write
\begin{align}
\tilde{\beta}''(1) =:k .
\end{align}
When $\Omega \in \{ B(0,1),\  {\mathbb R}^2 \setminus \overline{B(0,1)}\}$, 
we assume that
\begin{align}
\Theta_0 \delta < \inf_{x\in \Omega} \beta(x),
\end{align}
where $\Theta_0 <1$ is the spectral constant recalled in Appendix~\ref{sec:dG}. 
When $\Omega = {\mathbb R}^2$, we impose the stronger assumption that 
$\tilde{\beta}(r)$ has a unique, non-degenerate minimum at $r=1$ and that
\begin{align}
\inf_{x \in {\mathbb R}^2 \setminus U} \beta(x) > \delta.
\end{align}
\end{assump}

\begin{remark}
The assumptions assure that ground state eigenfunctions will be localized near $r=1$. For $\Omega = {\mathbb R}^2$, we have $k>0$ by assumption, but that is not necessarily true in the cases with boundary.
\end{remark}

\begin{defn}
We define
\begin{align}
\Phi := \frac{1}{2\pi} \int_{\{ |x| <1 \}} \beta(x)\,dx = \int_0^1 \tilde{\beta}(r) r\,dr,
\end{align}
i.e. $\Phi$ denotes the magnetic flux through the unit disc.
\end{defn}

For a magnetic field satisfying Assumption~\ref{ass:Magnetic} and $B>0$, 
we study the lowest eigenvalue $\eigone{\Ham(B)}$ of the self-adjoint 
magnetic Schr\"{o}dinger operator
\[
\Ham(B)=(-i\nabla+B\mathbf{F})^2
\]
in $L^2(\Omega)$. Here $\mathbf{F}$ is a magnetic vector potential associated
with the magnetic field $\beta$. We refer the reader to Section~\ref{sec:prel} 
for a more complete definition of this operator and the eigenvalue. 

We will study this eigenvalue problem in three cases, namely for 
$\Omega$ the unit disc, the complement of the unit disc 
and the whole plane $\R^2$. If $\Omega$ has a non-empty boundary we impose a 
magnetic Neumann boundary condition.

The next theorem states that if $\Omega$ is the unit disc or its 
complement,
then special choices of magnetic fields satisfying Assumption~\ref{ass:Magnetic} will give that the
function $B\mapsto \eigone{\Ham(B)}$ is \emph{not} monotonically increasing 
for large $B$. Before stating the theorems, we remind the reader that
\[
\xi_0,\quad \Theta_0,\quad \text{and}\quad \phi_{\xi_0}(0)
\]
are universal (spectral) constants coming from the de Gennes model operator---this is 
recalled in Appendix~\ref{sec:dG}.

\begin{thm}
\label{thm:main}
Let $\Omega$ be the unit disc or its complement. Suppose that $\beta$ satisfies Assumption~\ref{ass:Magnetic}.
Assume that $\delta>0$ and
\begin{align}\label{eq:fluxcondition}
\Phi > \frac{\Theta_0}{\xi_0 \varphi_{\xi_0}(0)^2} \delta.
\end{align}
Then for all $B_0>0$ there exist $B_1$ and 
$B_2$, with $B_0<B_1<B_2$, such that
\[
\eigone{\Ham(B_1)}>\eigone{\Ham(B_2)}.
\]
On the other hand, if 
\begin{align}\label{eq:fluxcondition2}
\Phi < \frac{\Theta_0}{\xi_0 \varphi_{\xi_0}(0)^2} \delta.
\end{align}
Then there exists $B_0>0$ such that $B \mapsto \eigone{\Ham(B)}$ is monotone increasing on $[B_0, \infty)$.
\end{thm}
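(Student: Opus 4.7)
The plan is to exploit the radial symmetry by fibering $\Ham(B)$ over the angular momentum $m\in\mathbb Z$, then produce a two-term semiclassical expansion of each fibre eigenvalue and read off an $O(1)$ oscillatory contribution depending on the fractional part of $B\Phi$. Fix the gauge $\mathbf{F} = \tilde{A}(r)\,\widehat{\theta}$ with $\tilde{A}(r) = r^{-1}\int_0^r s\tilde\beta(s)\,ds$, so that $\tilde{A}(1) = \Phi$. Expanding in Fourier modes $e^{im\theta}$ reduces $\Ham(B)$ to $\bigoplus_{m\in\mathbb Z}H_m(B)$, where
\[
H_m(B) = -\frac{1}{r}\partial_r(r\partial_r) + \Bigl(\frac{m}{r} - B\tilde{A}(r)\Bigr)^2,
\]
subject to the Neumann boundary condition at $r=1$, so that $\eigone{\Ham(B)} = \inf_{m\in\mathbb Z}\eigone{H_m(B)}$.

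Agmon-type estimates, using the hypothesis $\Theta_0\delta < \inf_\Omega\beta$, confine the ground state of $H_m(B)$ to a $B^{-1/2}$-neighborhood of $r=1$. Substituting $r = 1\mp\tau/\sqrt{B}$ (disc, resp.\ complement; $\tau>0$) and $m = B\Phi + \sqrt{B}\,\eta$, and using $\tilde{A}(1) = \Phi$, $\tilde{A}'(1) = \delta-\Phi$, the assumption $\tilde\beta'(1) = 0$, and $\tilde{A}''(1) = 2\Phi-\delta$, a Taylor expansion shows that $H_m(B)$ converges at leading order to the de Gennes operator $-\partial_\tau^2 + (\delta\tau\mp\eta)^2$ on $L^2(\R_+)$ with Neumann condition at $\tau=0$, whose ground state energy is $\delta\mu(\mp\eta/\sqrt{\delta})$, minimised at $\eta=\mp\sqrt{\delta}\,\xi_0$ with value $\delta\Theta_0$. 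Refining the quasimode by one further order yields, uniformly for integers $m$ near $B\Phi\mp\sqrt{B\delta}\,\xi_0$,
\[
\eigone{H_m(B)} = B\delta\Theta_0 + \sqrt{B}\,C_0 + \tfrac{1}{2}\mu''(\xi_0)\bigl(m - B\Phi\pm\sqrt{B\delta}\,\xi_0\bigr)^2 + o(1),
\]
with a constant $C_0 = C_0(\delta,k)$ independent of $m$.

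Minimising over the discrete variable $m$ introduces the triangle wave $\rho(B) := \dist(B\Phi\mp\sqrt{B\delta}\,\xi_0,\mathbb Z)\in[0,1/2]$, giving
\[
\eigone{\Ham(B)} = B\delta\Theta_0 + \sqrt{B}\,C_0 + \tfrac{1}{2}\mu''(\xi_0)\,\rho(B)^2 + o(1).
\]
Setting $f(B) = B\Phi\mp\sqrt{B\delta}\,\xi_0$, on every smooth arc of $\rho$ one has $|\rho'(B)| = f'(B)\to\Phi$ as $B\to\infty$. The Feynman-Hellmann identity $\mu''(\xi_0) = 2\xi_0\,\varphi_{\xi_0}(0)^2$ for the de Gennes operator then gives, in the decreasing regime of $\rho$ just after $\rho$ reaches its maximum $1/2$,
\[
\frac{d}{dB}\eigone{\Ham(B)} = \delta\Theta_0 - \xi_0\,\varphi_{\xi_0}(0)^2\,\Phi + o(1).
\]
Under \eqref{eq:fluxcondition} this is strictly negative, producing arbitrarily large $B_1<B_2$ with $\eigone{\Ham(B_1)} > \eigone{\Ham(B_2)}$. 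Under \eqref{eq:fluxcondition2} the derivative is bounded below by a positive constant on every smooth arc, and $\eigone{\Ham(\cdot)}$ is continuous across the half-integer corners, hence monotone increasing on $[B_0,\infty)$ for some $B_0$.

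The main obstacle is producing the two-term expansion uniformly in $m$ with error $o(1)$. This requires carefully constructed quasimodes carrying the $m$-dependent oscillation, together with Agmon decay estimates strong enough to pin down both the $\sqrt{B}$ coefficient $C_0$, which must be shown to be $m$-independent so that the oscillation lives at the $O(1)$ level, and the $O(1)$ coefficient $\mu''(\xi_0)/2$. The accompanying Feynman-Hellmann identity $\mu''(\xi_0) = 2\xi_0\,\varphi_{\xi_0}(0)^2$ for the de Gennes operator is what converts the derivative threshold into the flux condition stated in the theorem; for the complement, a subsidiary check is that the ground state stays below the essential spectrum, which is again ensured by $\Theta_0\delta < \inf_\Omega\beta$.
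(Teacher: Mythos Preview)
Your overall architecture matches the paper: Fourier decomposition in $m$, Agmon localization near $r=1$, rescaling to the de Gennes model, and a three-term expansion whose $O(1)$ part oscillates through the distance of $B\Phi\mp\xi_0\sqrt{\delta B}$ to the integers. The identification $\tfrac12\mu''(\xi_0)=\xi_0\varphi_{\xi_0}(0)^2$ is exactly what the paper uses, and your claim that the $\sqrt{B}$-coefficient is $m$-independent is correct (the paper computes it explicitly as $\pm\tfrac13\varphi_{\xi_0}(0)^2\delta^{1/2}$). One cosmetic point: the paper's expansion has an extra constant shift, so the oscillation depends on $\dist\bigl(B\Phi\mp\xi_0\sqrt{\delta B}-C_0,\mathbb Z\bigr)$ rather than your $\rho(B)$; this does not affect the argument.

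The non-monotonicity direction is essentially fine, though your phrasing in terms of $\tfrac{d}{dB}\eigone{\Ham(B)}$ is not quite legitimate: you cannot differentiate an expansion with $o(1)$ remainder. The paper avoids this by simply comparing values at two explicit sequences $B_1^{(n)}<B_2^{(n)}=B_1^{(n)}+\varepsilon$, chosen so that $\rho(B_1^{(n)})=\tfrac12$ and $\rho(B_2^{(n)})\to\tfrac12-\Phi\varepsilon$; the difference $\eigone{\Ham(B_2^{(n)})}-\eigone{\Ham(B_1^{(n)})}$ is then computed directly from the asymptotics and seen to be negative for small $\varepsilon$. Your argument is easily repaired along these lines.

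The genuine gap is in the monotonicity direction. Knowing that the explicit approximant $g(B)=B\delta\Theta_0+\sqrt{B}\,C_0+\xi_0\varphi_{\xi_0}(0)^2\rho(B)^2$ has $g'(B)\geq c>0$ and that $\eigone{\Ham(B)}=g(B)+o(1)$ does \emph{not} imply that $\eigone{\Ham(B)}$ is monotone: the $o(1)$ error could oscillate wildly. The paper supplies the missing idea. From Feynman--Hellmann one has the explicit formula
\[
\frac{d}{dB}_{\!+}\eigone{\Ham(B)} = 2\Re\langle\psi_B,\mathbf{F}\cdot(-i\nabla+B\mathbf{F})\psi_B\rangle,
\]
and by using $\psi_B$ as a trial state for $\Ham(B+\varepsilon)$ and completing the square one gets, for every $\varepsilon>0$,
\[
\frac{d}{dB}_{\!+}\eigone{\Ham(B)} \;\geq\; \frac{\eigone{\Ham(B+\varepsilon)}-\eigone{\Ham(B)}}{\varepsilon}\;-\;\varepsilon\int_\Omega|\mathbf{F}|^2|\psi_B|^2.
\]
Now the right-hand side \emph{can} be estimated from the asymptotic formula (the difference quotient over a fixed $\varepsilon$) together with the uniform bound $\int|\mathbf{F}|^2|\psi_B|^2\leq K$. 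Sending first $B\to\infty$ and then $\varepsilon\to 0$ gives $\liminf_{B\to\infty}\tfrac{d}{dB}_{\!+}\eigone{\Ham(B)}\geq\Theta_0\delta-\xi_0\varphi_{\xi_0}(0)^2\Phi$, which is positive under \eqref{eq:fluxcondition2}. Without this step (or an equivalent device converting the eigenvalue asymptotics into a one-sided derivative bound) your monotonicity argument does not close.
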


\begin{remark}
In particular, \eqref{eq:fluxcondition} holds for the magnetic field
\begin{align}
\beta(x) = \delta + (1-|x|)^2,
\end{align}
for all $\delta>0$ sufficiently small---the flux in this case is 
$\Phi = \frac{\delta}{2} + \frac{1}{12}$. Therefore, this magnetic field will 
not display monotonicity for large field strength.
\end{remark}

Theorem~\ref{thm:main} is a consequence of the following precise asymptotic 
formulas for the ground state eigenvalue given as Theorem~\ref{thm:extasymptot} 
and Theorem~\ref{thm:intasymptot}.

\begin{thm}
\label{thm:extasymptot}
Suppose that $\Omega$ is the complement of the unit disc, that $\beta$ satisfies Assumption~\ref{ass:Magnetic} and that
$\delta>0$. 
Then there are constants $C_0^{\text{ext}}$ and $C_1^{\text{ext}}$ such that if
\[
\Delta_B^{\text{ext}}
:=\inf_{m\in\mathbb{Z}}\bigl|
m-\Phi B-\xi_0 (\delta B)^{1/2}
-C_0^{\text{ext}}\bigr|,
\]
then, as $B\to+\infty$,
\[
\eigone{\Ham(B)} = \Theta_0\delta B 
+ \frac{1}{3}\phi_{\xi_0}(0)^2 (\delta B)^{1/2} 
+ \xi_0\,\phi_{\xi_0}(0)^2
\bigl((\Delta_B^{\text{ext}})^2+C_1^{\text{ext}}\bigr)+\Oh(B^{-1/2}).
\]
\end{thm}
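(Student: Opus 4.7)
The plan is to exploit the radial symmetry to decompose $\Ham(B)$ into a family of one-dimensional fiber operators indexed by angular momentum, and then perform a boundary-layer analysis near $r=1$, where the ground state localizes.

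Choosing the gauge $\mathbf{F}(r,\theta)=F(r)\hat\theta$ with $rF(r)=\int_0^r\tilde\beta(s)s\,ds$ and decomposing in Fourier modes in $\theta$ yields $\Ham(B)=\bigoplus_{m\in\mathbb{Z}}\Ham_m(B)$ with
\[
\Ham_m(B)=-\partial_r^2-\frac{1}{r}\partial_r+\frac{(m+BrF(r))^2}{r^2}
\]
acting on $L^2((1,\infty),r\,dr)$ with Neumann condition at $r=1$; hence $\eigone{\Ham(B)}=\inf_{m\in\mathbb{Z}}\eigone{\Ham_m(B)}$. An optional substitution $u=r^{-1/2}v$ passes to $L^2((1,\infty),dr)$ at the cost of a harmless $\tfrac{1}{4r^2}$ potential. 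Next, I would perform the boundary-layer rescaling $t=B^{1/2}(r-1)$. Setting $\mu_m:=(m+B\Phi)/B^{1/2}$ and Taylor-expanding around $r=1$ using Assumption~\ref{ass:Magnetic} (so $\tilde\beta(1)=\delta$, $\tilde\beta'(1)=0$, $\tilde\beta''(1)=k$), the rescaled fiber reads
\[
B\bigl(-\partial_t^2+(\delta t+\mu_m)^2\bigr)+B^{1/2}\bigl(\text{curvature and cubic corrections}\bigr)+\Oh(1).
\]
After the further rescaling $s=\delta^{1/2}t$, the leading half-line operator is $\delta$ times the de Gennes operator $\HamD(\xi)$ at parameter $\xi=-\mu_m/\delta^{1/2}$, whose ground state $\delta\mu^{\text{dG}}(\xi)$ is minimized at $\xi=\xi_0$ with value $\delta\Theta_0$. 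Choosing the integer $m$ closest to the real optimum yields the leading $\Theta_0\delta B$; up to sign conventions in the gauge this optimum is the quantity whose lattice-distance produces $\Delta_B^{\text{ext}}$.

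The two subleading terms come from second-order perturbation theory in $B^{-1/2}$. The perturbations to track are (i) the geometric contributions $-\tfrac{1}{r}\partial_r$ and $r^{-2}=1-2B^{-1/2}t+\Oh(B^{-1})$; (ii) the higher Taylor terms of $BrF(r)$; and (iii) the forced deviation $\xi-\xi_0$ due to integrality of $m$. A Feynman--Hellmann-type computation, parallel to the analyses in \cite{fohe3,fohebook}, collects (i)--(ii) into $\tfrac{1}{3}\phi_{\xi_0}(0)^2(\delta B)^{1/2}$ together with an $\Oh(B^{-1/2})$ shift of the optimal value of $\mu_m$; this shift is precisely the constant $C_0^{\text{ext}}$ in the definition of $\Delta_B^{\text{ext}}$. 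Using the identity $(\mu^{\text{dG}})''(\xi_0)=2\xi_0\phi_{\xi_0}(0)^2$ together with $\xi-\xi_0\propto\Delta_B^{\text{ext}}/(\delta B)^{1/2}$ transforms the quadratic Taylor expansion of $\mu^{\text{dG}}$ near $\xi_0$ back to the energy scale as $\xi_0\phi_{\xi_0}(0)^2(\Delta_B^{\text{ext}})^2$, while the residual order-one contributions from (i)--(ii) are collected into $C_1^{\text{ext}}$.

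To make this rigorous with error $\Oh(B^{-1/2})$, the upper bound uses the explicit quasimode $\phi_{\xi_0}(\delta^{1/2}B^{1/2}(r-1))\,e^{im\theta}$ for the near-optimal integer $m$. The lower bound requires Agmon-type exponential decay estimates for ground states (guaranteed by $\delta>0$ and the coercivity in Assumption~\ref{ass:Magnetic}) together with a uniform parabolic-in-$m$ bound $\eigone{\Ham_m(B)}\geq\Theta_0\delta B+c(m-m_*)^2+\dotsb$ valid for all $m\in\mathbb{Z}$ and all large $B$. I expect this last uniform-in-$m$ lower bound to be the \emph{main obstacle}: it is what restricts the effective infimum over the infinitely many angular modes to a bounded neighborhood of the real optimum, thereby producing the integer-lattice distance $\Delta_B^{\text{ext}}$ rigorously and controlling the remainder at the claimed order.
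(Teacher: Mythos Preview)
Your overall architecture matches the paper's: Fourier decomposition in $\theta$, Agmon localization to the boundary on the scale $B^{-1/2}$, change of variable $\rho=(\delta B)^{1/2}(r-1)$, recognition of the de~Gennes operator at leading order, and a perturbative expansion in $B^{-1/2}$. The identification of the three terms in the asymptotics is also correct in spirit.

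There is, however, a genuine gap in the precision of your rigorous argument. The single-term quasimode $\phi_{\xi_0}(\delta^{1/2}B^{1/2}(r-1))e^{im\theta}$ is \emph{not} good enough for an $\Oh(B^{-1/2})$ remainder. Its Rayleigh quotient equals $\Theta_0\delta B+\tfrac{1}{3}\phi_{\xi_0}(0)^2(\delta B)^{1/2}+\langle v_0,\h_2 v_0\rangle+\Oh(B^{-1/2})$, and $\langle v_0,\h_2 v_0\rangle$ differs from the true $\lambda_2$ by the second-order resolvent term $\langle v_0,\h_1 v_1\rangle$; so your upper bound has the wrong order-one constant. The paper instead builds a three-term quasimode $v=v_0+B^{-1/2}v_1+B^{-1}v_2$ with $v_1=-(\h_0-\lambda_0)^{-1}_{\text{reg}}\h_1 v_0$ and $v_2$ solving the next Grushin equation, and verifies $\|(\h-\lambda)v\|=\Oh(B^{-3/2})$ with $\lambda=\lambda_0+B^{-1/2}\lambda_1+B^{-1}\lambda_2$.

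The second issue is your lower-bound strategy. You correctly flag the uniform-in-$m$ control as the obstacle, but the paper does \emph{not} prove a direct parabolic lower bound to $\Oh(B^{-1/2})$ precision. It does something cleaner: it first shows (by comparing $\q_m$ to the de~Gennes form via Agmon) that each $\Ham_m(B)$ has \emph{at most one} eigenvalue below $t\delta B$ for any $t<1$. Given this spectral gap, the quasimode estimate $\dist(\lambda,\sigma(\h))=\Oh(B^{-3/2})$ from the spectral theorem is automatically a \emph{two-sided} bound on $\eigone{\Ham_m(B)}$; no separate matching lower bound is needed. The only remaining role of an $m$-parabolic estimate is the much softer statement that angular momenta with $|m-\Phi B-\xi_0(\delta B)^{1/2}|\gg B^{1/4}$ cannot compete, which follows from the non-degeneracy of the de~Gennes minimum and does not require $\Oh(B^{-1/2})$ precision. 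In short: upgrade the quasimode to three terms, and replace your lower-bound program by a uniqueness-of-low-eigenvalue lemma.
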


\begin{remark}
By a careful reading of the proof, one will realize that the constant 
$C_0^{\text{ext}}$ is independent of $\delta$ but that $C_1^{\text{ext}}$ 
depends on $\delta$. However, for our purposes this extra information is 
irrelevant.
\end{remark}

A similar expansion holds in the interior of the unit disc.
\begin{thm}
\label{thm:intasymptot}
Suppose that $\Omega$ is the unit disc,  that $\beta$ satisfies Assumption~\ref{ass:Magnetic} and that
$\delta>0$. 
Then there exist constants $C_0^{\text{int}}$ and $C_1^{\text{int}}$ such that 
if
\[
\Delta_B^{\text{int}}
:=\inf_{m\in\mathbb{Z}}\bigl|m-\Phi B+\xi_0 (\delta B)^{1/2}
-C_0^{\text{int}}\bigr|,
\]
then, as $B\to+\infty$,
\[
\eigone{\Ham(B)} = \Theta_0\delta B 
-\frac{1}{3}\phi_{\xi_0}(0)^2 (\delta B)^{1/2}
+ \xi_0\,\phi_{\xi_0}(0)^2\bigl((\Delta_B^{\text{int}})^2
+C_1^{\text{int}}\bigr)+\Oh(B^{-1/2}).
\]
\end{thm}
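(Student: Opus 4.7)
The plan mirrors the proof of Theorem~\ref{thm:extasymptot}. First I would exploit the rotational symmetry: with the radial gauge $\mathbf{F}(r,\theta) = a(r)\hat{\theta}$, $ra(r) = \int_0^r s\tilde\beta(s)\,ds$, the Fourier decomposition $\psi(r,\theta) = \sum_m e^{im\theta}u_m(r)/\sqrt{2\pi}$ turns $\Ham(B)$ into the orthogonal sum of fiber operators
\[
\Ham_m(B)u = -\frac{1}{r}(ru')' + \frac{(m - B\tilde{a}(r))^2}{r^2}u, \qquad \tilde{a}(r) := \int_0^r s\tilde\beta(s)\,ds,
\]
acting on $L^2((0,1), r\,dr)$ with Neumann condition at $r=1$. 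Then $\eigone{\Ham(B)} = \inf_{m \in \mathbb{Z}} \eigone{\Ham_m(B)}$, and the task becomes a uniform-in-$m$ semiclassical analysis of these $1$D fibers.

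Next I would fix $m$ and analyse $\Ham_m(B)$. Since $\tilde\beta(1) = \delta > 0$ and the Neumann condition at $r=1$ forces localisation to the boundary, I introduce the boundary-layer variable $\tau = (1-r)(\delta B)^{1/2}$ together with the quasi-momentum
\[
\xi_m(B) := \frac{m - B\Phi}{(\delta B)^{1/2}}.
\]
Using the Taylor expansion $\tilde{a}(r) = \Phi + \delta(r-1) + \tfrac{\delta}{2}(r-1)^2 + \tfrac{k}{6}(r-1)^3 + \Oh((r-1)^4)$ together with the expansions of $1/r$ and $1/r^2$ around $r=1$, the rescaled fiber reads $\delta B\bigl(\HamD(\xi_m) + (\delta B)^{-1/2}\Ham_1 + \Oh(B^{-1})\bigr)$, where $\HamD(\xi)$ is the de Gennes operator of Appendix~\ref{sec:dG} and $\Ham_1$ is an explicit first-order perturbation encoding the inward curvature of $\partial\Omega$ and the next Taylor coefficients of $\tilde{a}$.

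Third, I would apply regular perturbation theory around $\HamD(\xi)$. Writing $\mu(\xi)$ for its ground-state eigenvalue, evaluation of $\langle \Ham_1 \phi_\xi, \phi_\xi\rangle$ at $\xi = \xi_0$ produces the curvature correction $-\tfrac{1}{3}\phi_{\xi_0}(0)^2$---with the sign opposite to Theorem~\ref{thm:extasymptot} because the unit circle curves \emph{into} the disc---so that uniformly in $m$
\[
\eigone{\Ham_m(B)} = \delta B\,\mu(\xi_m(B)) - \tfrac{1}{3}\phi_{\xi_0}(0)^2(\delta B)^{1/2} + R(\xi_m, B) + \Oh(B^{-1/2}),
\]
where $R(\xi, B)$ is a smooth, $\Oh(1)$ remainder. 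Using the standard identities $\mu'(\xi_0) = 0$ and $\mu''(\xi_0) = 2\xi_0\phi_{\xi_0}(0)^2$ for $\HamD$, I Taylor-expand $\mu(\xi) = \Theta_0 + \xi_0\phi_{\xi_0}(0)^2(\xi - \xi_0)^2 + \Oh((\xi-\xi_0)^3)$ and minimise over $m \in \mathbb{Z}$: the optimiser is the integer nearest to $B\Phi - \xi_0(\delta B)^{1/2} + C_0^{\text{int}}$, where $C_0^{\text{int}}$ and $C_1^{\text{int}}$ absorb the constant contributions coming from $\Ham_1$, from $R$, and from the cubic Taylor term. The residual distance is exactly $\Delta_B^{\text{int}}$, producing the stated expansion.

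The proof is then completed by matching upper and lower bounds in the usual way: the upper bound by the trial function $\phi_{\xi_0}(\tau)\chi(r)e^{im_\star\theta}$ at the optimal $m_\star$, and the lower bound by an IMS localisation that uses $\Theta_0\delta < \inf_\Omega \beta$ (from Assumption~\ref{ass:Magnetic}) to confine the ground state to a neighborhood of $r=1$, after which the fiberwise bound applies. The main obstacle is the bookkeeping of all $\Oh((\delta B)^{-1/2})$ contributions---from $1/r$, $1/r^2$, $\tilde{a}''(1)$, $\tilde{a}'''(1) = k$, and the $m$-dependence of $\xi_m$---which must be collected consistently into $C_0^{\text{int}}$ and $C_1^{\text{int}}$. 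Conceptually, however, the only substantive change from the exterior case is the sign flip in the curvature term, which is automatic from the geometry.
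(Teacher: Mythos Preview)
Your strategy is essentially the paper's: Fourier decomposition into fibers $\Ham_m(B)$, boundary-layer rescaling near $r=1$, and perturbation theory around the de Gennes operator, with the sign of the subleading curvature term flipped relative to the exterior case. The paper organizes the expansion slightly differently---writing $m = \Phi B + \mu_1 B^{1/2} + \mu_2$, expanding $\h = \h_0 + B^{-1/2}\h_1 + B^{-1}\h_2$, and computing $\lambda_2$ as an explicit quadratic in $\mu_2$---but this is equivalent to your Taylor expansion of $\mu(\xi)$, since the coefficient $\xi_0\phi_{\xi_0}(0)^2$ arises in both cases from the identity $\mu''(\xi_0)=2\xi_0\phi_{\xi_0}(0)^2$ (cf.\ \eqref{eq:seconddG}).

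Two places where your sketch is thinner than what is actually needed. First, the trial state $\phi_{\xi_0}(\tau)\chi(r)e^{im_\star\theta}$ by itself only gives accuracy $\Oh(B^{1/2})$ in the eigenvalue; to resolve the $\Oh(1)$ term $(\Delta_B^{\text{int}})^2$ and reach the claimed $\Oh(B^{-1/2})$ remainder you must use the full quasimode $v_0 + B^{-1/2}v_1 + B^{-1}v_2$ built via the regularized resolvent $(\h_0-\lambda_0)^{-1}_{\text{reg}}$, exactly as in Lemma~\ref{lem:expansion}. Second, the lower bound does not come from a bare IMS localisation. The paper first proves (the disc analogue of Lemma~\ref{lem:uniqueeigenvalue}) that each fiber $\Ham_m(B)$ has \emph{at most one} eigenvalue below the relevant threshold; combined with the quasimode bound $\|(\h-\lambda)\tilde v\|=\Oh(B^{-3/2})$ this pins down $\eigone{\Ham_m(B)}$ from both sides. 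Without that uniqueness step a quasimode gives only an upper bound, and your ``fiberwise bound'' would need an independent argument to match it from below.
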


\begin{remark}
Notice that for the disc or its complement, the constant magnetic field 
$\beta(x) = \delta >0$ satisfies Assumption~\ref{ass:Magnetic}, so 
Theorems~\ref{thm:extasymptot} and~\ref{thm:intasymptot} imply this special 
case. This agrees with the calculations in~\cite{fohe3} (see 
also~\cite{fohebook}). In the case of constant field~\eqref{eq:fluxcondition} 
is not satisfied, and one does get monotonicity of the ground state energy for 
large magnetic field (this is discussed in detail in~\cite{fohe3}).
\end{remark}

We continue with $\Omega=\R^2$. Here, we are only able to destroy monotonicity
in the case $\delta=0$.

\begin{thm}\label{thm:wholeplane}
Let $\Omega=\R^2$. Then, for all $\delta >0$ and all magnetic fields satisfying Assumption~\ref{ass:Magnetic} there exists a $B_0>0$ such that
$\eigone{\Ham(B)}$ is monotonically increasing for $B>B_0$. However, if 
$\delta =0$, then $B \mapsto \eigone{\Ham(B)}$ is not monotone increasing on 
any unbounded half-interval.
\end{thm}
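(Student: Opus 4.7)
The plan is to exploit the radial symmetry of $\beta$ to decompose $\Ham(B) = \bigoplus_{m\in\mathbb{Z}} \Ham_m(B)$ into fibers indexed by angular momentum. After conjugation by $\sqrt{r}$, each fiber is the one-dimensional operator
\[
\Ham_m(B) = -\partial_r^2 + \frac{(m - BA(r))^2}{r^2} - \frac{1}{4r^2}
\]
on $L^2((0,\infty))$, where $A(r) := \int_0^r s\tilde\beta(s)\,ds$ satisfies $A(1) = \Phi$ and, using $\tilde\beta'(1) = 0$, one has $A'(1) = \delta$, $A''(1) = \delta$, $A'''(1) = k$. Writing $\lambda_{1,m}(B)$ for the ground state of $\Ham_m(B)$, we have $\eigone{\Ham(B)} = \inf_m \lambda_{1,m}(B)$; the two parts of the theorem correspond to markedly different semiclassical regimes in the fibers.

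For the case $\delta > 0$, I would first apply Agmon-type estimates, using the unique non-degenerate minimum of $\tilde\beta$ at $r = 1$ from Assumption~\ref{ass:Magnetic}, to localize low-energy eigenfunctions near that circle. Only modes with $|m - B\Phi| \lesssim 1$ are relevant to the infimum. For such $m$, the effective potential is minimized at $r_m$ solving $A(r_m) = m/B$, i.e.\ $r_m - 1 = (m - B\Phi)/(B\delta) + \Oh(B^{-2})$, and a harmonic approximation then yields
\[
\lambda_{1,m}(B) = B\tilde\beta(r_m) + \Oh(1) = \delta B + \frac{k(m - B\Phi)^2}{2\delta^2 B} + \Oh(1),
\]
where the $\Oh(1)$ correction depends on $m$ only through $r_m - 1 = \Oh(1/B)$. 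Differentiating in $B$ gives $\tfrac{d}{dB}\lambda_{1,m}(B) = \delta + \Oh(B^{-1}) > 0$ for $B$ large. Since $\eigone{\Ham(B)}$ is the infimum of these smooth, eventually monotone functions, it is itself monotone increasing on some $[B_0, \infty)$.

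For the case $\delta = 0$, one has $A(r) = \Phi + \tfrac{k}{6}(r - 1)^3 + \Oh((r-1)^4)$ and the natural localization scale is $B^{-1/4}$. Rescaling $s = B^{1/4}(r - 1)$ and setting $\eta_m(B) := (m - B\Phi) B^{-1/4}$ transforms the fiber into
\[
\Ham_m(B) = B^{1/2}\, T(\eta_m(B)) + \Oh(B^{1/4}), \qquad T(\eta) := -\partial_s^2 + \bigl(\eta - \tfrac{k}{6} s^3\bigr)^2,
\]
uniformly for $\eta_m(B)$ in a bounded set. Let $\mu(\eta)$ be the ground state of $T(\eta)$ on $L^2(\R)$. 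Kato perturbation theory makes $\mu$ real-analytic, the reflection $s \mapsto -s$ gives $\mu(-\eta) = \mu(\eta)$, and a harmonic-oscillator comparison near the unique zero $s_0 = (6\eta/k)^{1/3}$ of the potential shows $\mu(\eta) \to \infty$ as $|\eta| \to \infty$, so $\mu$ attains its infimum $\nu_0$ at some $\eta^*$. Non-constancy of $\mu$ (enforced by its growth at infinity), together with analyticity, forces $\mu(\eta) > \nu_0$ on a punctured neighborhood of $\eta^*$ with generic quadratic vanishing. This leads to
\[
\eigone{\Ham(B)} = B^{1/2} \min_{m\in\mathbb{Z}} \mu(\eta_m(B)) + o(B^{1/2}).
\]
Given any $B_0$, I would choose $B_1 > B_0$ so that the lattice $\{\eta_m(B_1)\}_m$ (spacing $B_1^{-1/4}$) avoids a $\tfrac14 B_1^{-1/4}$-neighborhood of $\eta^*$---possible within every $B$-window of length $1/\Phi$---which gives $\min_m\mu(\eta_m(B_1)) \geq \nu_0 + c_1 B_1^{-1/2}$ with $c_1 > 0$ independent of $B_1$. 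Then I would pick $B_2 \in (B_1, B_1 + 1/\Phi]$ at which some $\eta_{m_0}(B_2) = \eta^*$ exactly (guaranteed by continuous sliding of the lattice through $\eta^*$). These choices give
\[
\eigone{\Ham(B_1)} - \eigone{\Ham(B_2)} \geq c_1 - \frac{\nu_0(B_2 - B_1)}{2\sqrt{B_1}} + o(1) \geq \tfrac{c_1}{2} > 0
\]
for $B_1$ sufficiently large, as required.

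The main technical obstacle is establishing the uniform-in-$m$ semiclassical asymptotic $\lambda_{1,m}(B) = B^{1/2}\mu(\eta_m(B)) + o(B^{1/2})$ in the $\delta = 0$ case: it demands matching a quasi-mode construction with an Agmon/IMS lower bound for the quartic-well operator $(\eta - \tfrac{k}{6} s^3)^2$, analogous to but more delicate than the arguments underlying Theorems~\ref{thm:extasymptot} and~\ref{thm:intasymptot}, since the width $B^{-1/4}$ of the effective well is much larger than the classical magnetic length $B^{-1/2}$. A secondary point is the non-degeneracy of $\mu$'s minimum, needed for the $B^{-1/2}$ lower bound on the oscillation amplitude; this is handled by combining analyticity of $\mu$ with its non-constancy, supplemented if necessary by an explicit second-order perturbation computation at a critical point of $\mu$.
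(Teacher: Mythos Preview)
Your strategy matches the paper's (fiber decomposition, Montgomery-type model for $\delta = 0$, lattice-induced oscillation), but the $\delta = 0$ argument has a genuine gap in the error control. The oscillation you wish to detect lives at order $O(1)$: for the minimizing $m$ one has $\eta_m(B) = O(B^{-1/4})$, so with a quadratic minimum of $\mu$ at $\eta^*$ you get $B^{1/2}[\mu(\eta_m) - \nu_0] = O(1)$. Your stated remainder $o(B^{1/2})$---or the $O(B^{1/4})$ implied by the one-term operator approximation $\Ham_m(B) = B^{1/2}T(\eta_m) + O(B^{1/4})$---therefore swallows the entire effect; in your final displayed inequality you silently replace this remainder by $o(1)$, but nothing in the argument forces the errors at $B_1$ and $B_2$ to cancel to that precision. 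The paper cures this by expanding the rescaled fiber operator to three orders, $\h_0 + B^{-1/4}\h_1 + B^{-1/2}\h_2$: the $B^{1/4}$-correction $\lambda_1 = \langle v_0, \h_1 v_0\rangle$ vanishes by parity (and here it is essential that the Montgomery minimum sits at $\eta^* = 0$, a non-trivial fact supplied by the appendix and the cited uniqueness theorem), so the first non-trivial correction is the order-$1$ term $\lambda_2$, which carries the quadratic $(m - \Phi B - C_1)^2$ contribution, with total remainder $o(1)$. Without pushing the expansion to this level you cannot conclude.

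A smaller issue in the $\delta > 0$ part: ``the infimum of eventually monotone functions is itself monotone'' is not valid as stated, because your asymptotic for $\lambda_{1,m}(B)$ only holds in the window where $|m - B\Phi|$ is bounded, which for fixed $m$ is a neighbourhood of $B \approx m/\Phi$; outside that window the individual fibers are not covered by the expansion (and are in fact not globally increasing in $B$). What does work is that the right-derivative of $\inf_m \lambda_{1,m}(B)$ equals the derivative of the currently minimizing fiber, to which your asymptotic always applies. The paper sidesteps this by first obtaining the global expansion $\eigone{\Ham(B)} = \delta B + k/(4\delta) + O(B^{-1/2})$ and then invoking a Feynman--Hellmann argument to get $\lim_{B\to\infty}\tfrac{d}{dB}\eigone{\Ham(B)} = \delta$.
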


As for the disc and the exterior of the disc, the proof of this result goes
via asymptotic expansions.

\begin{thm}\label{thm:R2deltapos}
Suppose that $\Omega=\R^2$, and that $\beta$ satisfies Assumption~\ref{ass:Magnetic} with $\delta>0$. Then, as $B\to+\infty$,
\[
\eigone{\Ham(B)} = \delta B+\frac{k}{4\delta} + \Oh(B^{-1/2}).
\]
\end{thm}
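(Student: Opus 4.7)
The plan is to exploit the rotational symmetry of $\tilde\beta$, reduce $\Ham(B)$ to a one-parameter family of one-dimensional operators indexed by angular momentum, and then perform a semiclassical perturbation expansion around a harmonic-oscillator approximation centered at $r=1$. Choose the azimuthal gauge $\mathbf{F}(r,\theta) = (\Phi(r)/r)\,\hat\theta$ with $\Phi(r):=\int_0^r s\,\tilde\beta(s)\,ds$, so that $\curl \mathbf F = \tilde\beta$ and $\Phi(1)=\Phi$, $\Phi'(1)=\delta$, $\Phi''(1)=\delta$, $\Phi'''(1)=k$. Since $\Ham(B)$ commutes with rotations, Fourier decomposition in $\theta$ gives
\[
\Ham(B) = \bigoplus_{m\in\mathbb{Z}} H_m(B), \qquad H_m(B) = -\partial_r^2 - \tfrac{1}{r}\partial_r + \tfrac{(m-B\Phi(r))^2}{r^2},
\]
on $L^2(\R_+, r\,dr)$, and the substitution $\psi\mapsto\sqrt{r}\,\psi$ converts $H_m(B)$ into $\tilde H_m = -\partial_r^2 + W_m(r)$ on $L^2(\R_+, dr)$ with $W_m(r) = \bigl((m-B\Phi(r))^2 - 1/4\bigr)/r^2$.

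Next, localize in $m$ and in $r$. For each $m$ the classical minimum of $W_m$ lies at the unique $r_\star=r_\star(m,B)$ with $B\Phi(r_\star)=m$, and a harmonic approximation there gives leading energy $B\tilde\beta(r_\star)$. Since $\tilde\beta$ has a unique non-degenerate minimum at $r=1$ with value $\delta$ and, by Assumption~\ref{ass:Magnetic}, $\tilde\beta\ge\delta+c$ outside a neighbourhood of the unit circle, only the integer $m$ closest to $B\Phi$ can yield $\eigone{\tilde H_m}\le\delta B+\Oh(1)$; for that $m$, $\mu:=m-B\Phi\in[-\tfrac{1}{2},\tfrac{1}{2}]$ and $r_\star-1=\Oh(B^{-1})$. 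A standard Agmon estimate driven by the harmonic confinement of $W_m$ then concentrates the ground state in a window of width $\Oh((B\delta)^{-1/2})$ around $r_\star$, while all other angular sectors contribute at least $\delta B+c$.

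In this optimal sector, expand $W_m$ around $r_\star$ and set $u=\sqrt{B\delta}\,(r-r_\star)$. Using $\Phi'(r_\star)=\delta+\Oh(B^{-1})$, $\Phi''(r_\star)=\delta+\Oh(B^{-1})$, $\Phi'''(r_\star)=k+\Oh(B^{-1})$, a direct Taylor expansion yields on $L^2(du)$
\[
\tilde H_m = B\delta\bigl(-\partial_u^2 + u^2\bigr) - (B\delta)^{1/2}\, u^3 + \Bigl(\tfrac{5}{4}+\tfrac{k}{3\delta}\Bigr)u^4 - \tfrac{1}{4} + R_B(u),
\]
where $R_B$ is a polynomial remainder of size $\Oh(B^{-1/2})$ on Gaussian-weighted states. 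Rayleigh--Schr\"odinger perturbation theory around the Hermite ground state $\phi_0(u)=\pi^{-1/4}e^{-u^2/2}$ then produces the asymptotic eigenvalue: the cubic term vanishes at first order by parity but contributes at second order $-\tfrac{1}{2}\sum_{n\ge 1}|\langle\phi_n,u^3\phi_0\rangle|^2/n = -\tfrac{11}{16}$, while first-order perturbation by $(\tfrac{5}{4}+\tfrac{k}{3\delta})u^4-\tfrac{1}{4}$ gives $\tfrac{3}{4}(\tfrac{5}{4}+\tfrac{k}{3\delta})-\tfrac{1}{4} = \tfrac{11}{16}+\tfrac{k}{4\delta}$. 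The universal $\tfrac{11}{16}$ contributions cancel exactly, leaving $\tfrac{k}{4\delta}$; the $\mu$-dependence enters only through terms of size $\Oh(\mu^2/B)=\Oh(B^{-1})$. As a consistency check, $k=0$ corresponds to the constant field $\beta\equiv\delta$ whose exact ground-state eigenvalue is the lowest Landau level $\delta B$, in agreement with $k/(4\delta)=0$.

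This formal computation is turned into matching upper and lower bounds in the usual way: the upper bound by a cut-off translate of $\phi_0$ used as a trial function in the optimal angular sector; the lower bound via an IMS partition separating the optimal sector from the rest and a Temple-type inequality (or a Grushin/Feshbach reduction) applied inside the localization window to capture the second-order cubic contribution rigorously. The main obstacle is precisely this last step: one must control the action of the cubic perturbation on the Hermite ground state in a weighted norm strong enough to absorb the remainder $R_B$, yet sharp enough to preserve the exact cancellation of the $\tfrac{11}{16}$ term.
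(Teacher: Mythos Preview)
Your formal expansion and the Rayleigh--Schr\"odinger computation are correct, and the overall strategy parallels the paper's: Fourier decompose, localize near $r=1$, and perturb a harmonic oscillator. Your choice to center the expansion at $r_\star$ (the potential minimum in sector $m$) rather than at $r=1$ is a legitimate variant that makes the cubic/quartic structure and the $\tfrac{11}{16}$ cancellation transparent; the paper instead expands at $r=1$, writes $m=\Phi B+\mu_1 B^{1/2}+\mu_2$, shifts by $\mu_1/\sqrt{\delta}$, and finds $\lambda_2=k\bigl(\tfrac{\mu_1^2}{2\delta^2}+\tfrac{1}{4\delta}\bigr)$ directly. The paper also avoids Temple/Feshbach for the lower bound: it shows (Lemma~\ref{lem:uniqueeigenvalue2}) that each $\Ham_m(B)$ has at most one eigenvalue below $\delta B+\omega B^{1/2}$, so the quasimode energy automatically equals that eigenvalue to the stated precision.

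There is, however, a genuine gap in your angular-momentum localization. The assertion that ``only the integer $m$ closest to $B\Phi$ can yield $\eigone{\tilde H_m}\le\delta B+\Oh(1)$'' is false. By your own harmonic approximation $\eigone{\tilde H_m}\approx B\tilde\beta(r_\star)\approx\delta B+\tfrac{k(m-B\Phi)^2}{2\delta^2 B}$, so every $m$ in the window $|m-B\Phi|=\Oh(B^{1/2})$---roughly $B^{1/2}$ integers, not one---produces an eigenvalue $\delta B+\Oh(1)$. Your expansion was carried out only for $|\mu|\le\tfrac12$ (hence $r_\star-1=\Oh(B^{-1})$), and the claim that ``all other angular sectors contribute at least $\delta B+c$'' fails for any $c>k/(4\delta)$ whenever $|m-B\Phi|$ is bounded. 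For the lower bound you must therefore handle the full range $|m-B\Phi|=\Oh(B^{3/4})$ supplied by the Agmon estimate. The paper does this in two stages: for $|m-\Phi B|\le MB^{1/2}$ the complete expansion gives $\eigone{\Ham_m(B)}=\delta B+\tfrac{k\mu_1^2}{2\delta^2}+\tfrac{k}{4\delta}+\Oh(B^{-1/2})\ge\delta B+\tfrac{k}{4\delta}-\Oh(B^{-1/2})$; for $MB^{1/2}\le|m-\Phi B|\le M'B^{3/4}$ a separate quasimode (Lemma~\ref{lem:shooting}) combined with the simplicity result yields $\eigone{\Ham_m(B)}=\delta B+\tfrac{k}{2}\eta^2 B^{1/2}+\Oh(|\eta|B^{1/4})$ with $\eta=(m-\Phi B)/(\delta B^{3/4})$, which is at least $\delta B+cB^{1/2}$ in that range (Proposition~\ref{prop:BetterLocInm}). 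Your $r_\star$-based approach can be repaired analogously, but you must either extend the expansion uniformly over $|m-B\Phi|\le MB^{1/2}$ (where $r_\star-1=\Oh(B^{-1/2})$ and the coefficients $\Phi^{(j)}(r_\star)$ require one further order of Taylor expansion) or supply a separate crude bound for the intermediate range.
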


\begin{thm}\label{thm:R2deltazero}
Let $c_0>0$ and $\Xi$ be the spectral constants from~\eqref{eq:c0} 
and~\eqref{eq:DefXi} respectively. Suppose that 
$\Omega=\R^2$, and that and that $\beta$ satisfies Assumption~\ref{ass:Magnetic} 
with $\delta=0$. There exist constants $C_1$ and $C_2$
such that if
\[
\Delta_B := \inf_{m\in\mathbb{Z}}\bigl|m-\Phi B-C_1\bigr|,
\]
then, as $B\to+\infty$,
\[
\eigone{\Ham(B)} = \Bigl(\frac{k}{2}\Bigr)^{1/2}\Xi B^{1/2}+\frac{c_0}{2}\bigl(\Delta_B^2+C_2\bigr) + o(1).
\]
\end{thm}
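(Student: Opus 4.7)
The plan is to follow the overall strategy of Theorems~\ref{thm:extasymptot} and~\ref{thm:intasymptot}, adapting both the localization scale and the model operator to the fact that with $\delta=0$ the field $\tilde\beta$ has a non-degenerate zero of order two on the critical circle $\{|x|=1\}$. Since $\beta$ is radial, I work in polar coordinates with the gauge $\mathbf{F}(r,\theta)=(A(r)/r)\hat{\theta}$, where $A(r):=\int_0^r \tilde\beta(s)s\,ds$, so that $A(1)=\Phi$. A Fourier decomposition in $\theta$ writes $\Ham(B)$ as a direct sum $\bigoplus_{m\in\mathbb{Z}}H_m(B)$ on $L^2((0,\infty),r\,dr)$, with
\[
H_m(B) \;=\; -\partial_r^2 \;-\; \tfrac{1}{r}\partial_r \;+\; \tfrac{(m-BA(r))^2}{r^2}.
\]
The parameter $\nu:=m-B\Phi$ governs the asymptotics of each mode, and the eventual discrete minimization over $m\in\mathbb{Z}$ is responsible for the term $\Delta_B$.

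Because $\tilde\beta(r)\geq 0$ with equality only at $r=1$ and $\tilde\beta''(1)=k>0$, a standard Agmon estimate localizes low-lying eigenfunctions in the window $|r-1|=\Oh(B^{-1/4+\epsilon})$. This is the scale on which the radial kinetic term $B^{2\gamma}$ balances the sextic behaviour $B^2(r-1)^6\sim B^{2-6\gamma}$ of $(m-BA(r))^2$, giving $\gamma=1/4$. Introducing $t:=B^{1/4}(r-1)$ and using
\[
B\bigl(A(r)-\Phi\bigr) \;=\; \tfrac{k}{6}\,t^3 B^{1/4} \;+\; \Oh(t^4) \;+\; \Oh(t^5 B^{-1/4}),
\]
the rescaled operator $B^{-1/2}H_m(B)$ converges, with $\nu$ in any fixed bounded set, to the sextic anharmonic oscillator $L_0:=-\partial_t^2+\tfrac{k^2}{36}t^6$ on $L^2(\R)$. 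A dilation $t\mapsto(36/k^2)^{1/8}s$ identifies $\inf\spec(L_0)$ with $(k/2)^{1/2}\Xi$, where $\Xi$ is the spectral constant of~\eqref{eq:DefXi}; this provides the leading term $(k/2)^{1/2}\Xi B^{1/2}$.

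For the next-order correction I would treat $H_m(B)-B^{1/2}L_0$ as a perturbation in powers of $B^{-1/4}$ and apply Rayleigh--Schr\"odinger theory around the ground state of $L_0$. The coupling to $\nu$ enters as $-2\nu\cdot\tfrac{k}{6}t^3 B^{-1/4}$ inside $(m-BA(r))^2$; because $L_0$ is even in $t$ and this term is odd, the first-order shift vanishes by parity, and the leading $\nu$-dependence in the eigenvalue is quadratic. Gathering the several $\Oh(1)$ contributions --- the direct $\nu^2$ term, the second-order shift from the odd $t^3$ perturbation, the $t^4$ piece of $B(A(r)-\Phi)$, and the geometric corrections from the expansion of $1/r^2$ and the centrifugal $\tfrac{1}{r}\partial_r$ --- produces the mode asymptotics
\[
\eigone{H_m(B)} \;=\; (k/2)^{1/2}\Xi B^{1/2} + \tfrac{c_0}{2}\bigl((\nu-C_1)^2+C_2\bigr)+o(1),
\]
uniformly for $\nu$ in bounded sets. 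The upper bound is realized by the trial state $e^{im\theta}\chi(r-1)\phi_0(B^{1/4}(r-1))$, with $\phi_0$ the normalized ground state of $L_0$ and $\chi$ a smooth cutoff. The matching lower bound is obtained via an IMS partition of unity combined with the Agmon estimate to discard the region $|r-1|\geq B^{-1/4+\epsilon}$ (where the positivity of $\beta$ forces an energy exceeding $B^{1/2}$), reducing the problem to the rescaled operator on $\R$. Minimizing over $m\in\mathbb{Z}$ finally replaces $(\nu-C_1)^2$ by $\Delta_B^2$, yielding the theorem.

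The main obstacle is the precise bookkeeping of the $\Oh(1)$ corrections that identify the constants $c_0$, $C_1$, $C_2$: the second-order contribution of the odd $t^3$ coupling, the cross-terms of $\nu$ with the even $t^4$ Taylor piece of $A(r)-\Phi$, and the geometric corrections from $1/r^2$ and $\tfrac{1}{r}\partial_r$ all contribute at the same order and must be combined correctly. Equally delicate is proving that the expansion is uniform in $m$ on the bounded range of $\nu$ where the minimum is attained, while modes with $|\nu|\to\infty$ lie strictly above the minimum by a positive amount, so that only finitely many $m$ need to be examined in the discrete optimization.
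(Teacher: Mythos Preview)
Your approach is the paper's approach: Fourier decomposition, Agmon localization at the $B^{-1/4}$ scale, the change of variable $t=B^{1/4}(r-1)$ leading to the Montgomery-type model, perturbative expansion in $B^{-1/4}$ with the first correction killed by parity, and discrete optimization over $m$.

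The one substantive point you leave unresolved is exactly the one you flag at the end. A priori the Agmon estimate only yields $|\nu|=\Oh(B^{1/4})$ (after one refinement from $\Oh(B^{3/4})$), not $\nu$ bounded; in the regime $\nu=\mu_3 B^{1/4}$ with $\mu_3$ of order one the rescaled leading operator is not $L_0=-\partial_t^2+\tfrac{k^2}{36}t^6$ but the full Montgomery operator $\M(\alpha)$ with $\alpha=(2/k)^{1/4}\mu_3$. Excluding these modes, and identifying the coefficient of $(\nu-C_1)^2$ as the strictly positive constant $c_0/2$, both rest on the nontrivial fact that $\alpha\mapsto\eigone{\M(\alpha)}$ has its \emph{unique}, non-degenerate minimum at $\alpha=0$; this is Theorem~\ref{thm:Non-degenerate}, imported from~\cite{fope3}, and is the missing ingredient in your outline. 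The paper makes this explicit by writing $m=\Phi B+\mu_3 B^{1/4}+\mu_4$, first optimising the leading Montgomery eigenvalue over $\mu_3$ to force $\mu_3=0$, and only then running the second-order perturbation in $\mu_4$.

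A minor methodological difference: for the lower bound the paper does not use an IMS comparison but instead shows that each $\Ham_m(B)$ carries at most one eigenvalue below the relevant threshold (Lemma~\ref{lem:uniqueeigenvalue3}); a sufficiently precise quasimode then automatically pins down the ground state energy. Your IMS route would also work.
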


\begin{remark}
In all of the results above the ground state has angular momentum 
$m \approx \Phi B$ (to leading order in $B$).
We recall that $\Phi B$ is the total flux through the unit 
disc---the bounded domain enclosed by the curve where we have localization.
The possibility to obtain non-monotonicity comes from the condition that $m$
must be an integer, which leads to frustration. 
This is similar to examples in~\cite{Er1}. 
\end{remark}

\begin{remark}
Theorem~\ref{thm:wholeplane} raises the question whether one can break strong 
diamagnetism with a strictly positive magnetic field on the whole plane.
\end{remark}

\subsection{Organization of the paper}
In the next section we define the operators involved and perform the Fourier
decomposition reducing the study to a family of ordinary differential operators.

In Section~\ref{sec:annulus} we prove a non-monotonicity result for an annulus
and use that to prove Theorem~\ref{thm:main3}. In Section~\ref{sec:section4} we
work in the exterior of the unit disc and prove Theorem~\ref{thm:extasymptot}.
We indicate in Section~\ref{sec:disc} how the proof of 
Theorem~\ref{thm:extasymptot} can be modified to give the proof of
Theorem~\ref{thm:intasymptot}. In Section~\ref{sec:nonmondiscanddiscext} we see
how Theorem~\ref{thm:extasymptot} and Theorem~\ref{thm:intasymptot} imply 
Theorem~\ref{thm:main}.

In Section~\ref{sec:planeg0} we prove Theorem~\ref{thm:R2deltapos} and in
Section~\ref{sec:plane0} we prove Theorem~\ref{thm:R2deltazero}. These two
results are used to prove Theorem~\ref{thm:wholeplane}.

\section{Preliminaries}
\label{sec:prel}

\subsection{Definition of the operator}
\label{sec:defs}

We consider the self-adjoint magnetic Neumann Schr\"odinger operator
\begin{equation}\label{eq:neumannop}
\Ham(B)=(-i\nabla+B\mathbf{F})^2
\end{equation}
with domain
\begin{align}\label{eq:neumanncond-New}
\dom(\Ham(B))=\bigl\{\psi\in L^2(\Omega)&\mid
(-i\nabla+B\mathbf{F})^2 \psi \in L^2(\Omega) \nonumber \\
&\quad \text{ and }
N(x)\cdot(-i\nabla+B\mathbf{F})\psi|_{\partial\Omega}=0\bigr\}.
\end{align}

Here $N(x)$ is the interior unit normal to $\partial\Omega$, 
\[
\beta(x)=\Bigl(\frac{\partial F_2}{\partial x_1}
-\frac{\partial F_1}{\partial x_2}\Bigr),\quad \mathbf{F}=(F_1,F_2),
\]
and $B\geq 0$ is the strength of the magnetic field. 

In general, for a self-adjoint operator $\Ham$ that is semi-bounded from below 
we will write
\begin{equation*}
\eigone{\Ham} = \inf\spec\bigl(\Ham\bigr)
\end{equation*}
for the lowest point of the spectrum of $\Ham$. 

In the case of the disc or if $\beta(x)\to+\infty$ as $|x|\to+\infty$
the operator has compact resolvent (see~\cite{avhesi}). If $\Omega$ is
unbounded and if $\beta(x)\not\to+\infty$, then the essential spectrum
will be bounded below by $\liminf_{r\to+\infty} B\tilde{\beta}(r)>B\delta$
(see~\cite{hemo88} for the case of $\R^2$ and \cite{kape} for the case
of the exterior of the disc). In any case, as it will follow by the results
below, $\eigone{\Ham(B)}$ will be an eigenvalue.

\subsection{Fourier decomposition}
We will work in domains $\Omega$ that are rotationally symmetric. For that 
reason, we will often work in polar coordinates
\begin{equation*}
\left\{
\begin{aligned}
x_1 &= r \cos\theta,\\
x_2 &= r \sin\theta,
\end{aligned}\right.\qquad r\in I,\ 0\leq\theta<2\pi.
\end{equation*}
Here $I\subset[0,+\infty)$ will be an interval. 

Moreover, we will work with magnetic fields that depends only on $r=|x|$.

For a radial magnetic field $\beta(x)=\tilde\beta(r)$ we will work
with the gauge
\[
\mathbf{F}(x)=a(r)(-\sin\theta,\cos\theta),
\]
where\footnote{Notice that $\int_0^r \tilde\beta(s)s\,ds 
= \frac{1}{2\pi} \int_{B(0,r)} \beta(x)\,dx$, so $ra(r)$ has an immediate 
interpretation in terms of the flux through the disc of radius $r$.}
\begin{align}
\label{eq:Def_a}
a(r)=\frac{1}{r}\int_0^r \tilde\beta(s) s\,ds.
\end{align}
In calculations, we will often meet the expression $(\frac{m}{r} - B a(r))^2$. 
This we can write as
\begin{align}\label{eq:Cancellation}
\Bigl(\frac{m}{r} - B a(r)\Bigr)^2 &= \frac{1}{r^2} \bigl(m - B r a(r)\bigr)^2,
\end{align}
where
\begin{align}
\label{eq:rar-split}
r a(r) = \int_0^1 \tilde{\beta}(s) s \,ds + \int_1^r \tilde{\beta}(s) s \,ds = \Phi + \int_0^{r-1} \tilde{\beta}(1+s) (1+s) \,ds.
\end{align}
Thus, under Assumption~\ref{ass:Magnetic}, as $r\to 1$,

\begin{align}\label{eq:rar_expansion}
ra(r)&=\Phi + \delta(r-1) + \frac{\delta}{2}(r-1)^2 + \frac{k}{6}(r-1)^3 
+\Bigl(\frac{c}{24}+\frac{k}{8}\Bigr)(r-1)^4+\mathcal{O}((r-1)^5).
\end{align}
with $c= \tilde{\beta}'''(1)$.

The expression for the operator $\Ham(B)$ in polar coordinates becomes
\[
\Ham(B)=-\frac{\partial^2}{\partial r^2}
-\frac{1}{r}\frac{\partial}{\partial r}
+\Bigl(\frac{i}{r}\frac{\partial}{\partial\theta}-Ba(r)\Bigr)^2.
\]
We decompose the Hilbert space as (Here $I$ denotes any of the intervals 
$(\Ri,\Ro)$, $(0,1)$, 
$(1,+\infty)$ or $(0,+\infty)$)
\begin{align*}
L^2(\Omega)\cong 
L^2\bigl(I, rdr \bigr) \otimes L^2(\mathbb{S}^1,d\theta)
\cong \bigoplus_{\mathclap{m=-\infty}}^\infty L^2\bigl(I, rdr \bigr) 
\otimes \frac{e^{-im\theta}}{\sqrt{2\pi}},
\end{align*}
that is, for a function $\psi\in L^2(\Omega)$, we write
\begin{equation*}
\psi(r,\theta)=\sum_{m\in\mathbb{Z}} \psi_m(r)
\frac{e^{-im\theta}}{\sqrt{2\pi}},
\end{equation*}
where $\psi_m\in L^2\bigl(I, r dr \bigr)$. Next, we write the operator 
$\Ham(B)$ corresponding to this decomposition as
\begin{equation*}
\Ham(B) = \bigoplus_{\mathclap{m=-\infty}}^\infty \Ham_m(B)\otimes 1,
\end{equation*}
where $\Ham_m(B)$ is the self-adjoint operator acting in 
$L^2\bigl(I, r\,dr\bigr)$, given by
\begin{equation*}
\Ham_m(B)
=-\frac{d^2}{dr^2}-\frac{1}{r}\frac{d}{dr}
+\Bigl(\frac{m}{r}-Ba(r)\Bigr)^2,
\end{equation*}
with Neumann boundary 
conditions at the endpoints of $I$. The quadratic form 
corresponding to $\Ham_m(B)$ is given by
\begin{equation}
\label{eq:quad}
\q_m[\psi]=\int_I \Bigl[|\psi'(r)|^2
+\Bigl(\frac{m}{r}-Ba(r)\Bigr)^2|\psi(r)|^2 
\Bigr]r\, dr.
\end{equation}
It holds that
\begin{equation}
\label{eq:infmeig}
\eigone{\Ham(B)} = \inf_{m\in\mathbb{Z}}\eigone{\Ham_m(B)}.
\end{equation}

\section{The analysis of the annulus}
\label{sec:annulus}
\subsection{Introduction}
In this section we will let
\[
\beta(x)=1 \quad \text{and}\quad 
\Omega=\bigl\{x\in\mathbb{R}^2~\mid~\Ri<|x|<\Ro\bigr\}.
\]
We aim to prove Theorem~\ref{thm:main3}.

\subsection{The linear result}
We first notice the non-monotonicity of the function 
$B\mapsto \eigone{\Ham(B)}$.

\begin{thm}
\label{thm:mainlin}
Let $\Ri=1$ and $1<\Ro<\sqrt{2}$. Then the operator $\Ham(B)$ in the annulus 
$\Omega$ satisfies
\[
\left.\frac{d}{d B}\eigone{\Ham(B)}\right|_{B=1}<0.
\]
In particular, the function $B\mapsto \eigone{\Ham(B)}$ is monotonically 
decreasing around ${B=1}$.
\end{thm}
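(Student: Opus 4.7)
The plan is to combine the Fourier decomposition from Section~\ref{sec:prel} with a Feynman--Hellmann calculation. With $\beta \equiv 1$, the gauge \eqref{eq:Def_a} gives $a(r) = r/2$, so after expanding the square the fiber operators on $I = (1, \Ro)$ with Neumann boundary conditions read
\[
\Ham_m(B) = -\frac{d^2}{dr^2} - \frac{1}{r}\frac{d}{dr} + \frac{m^2}{r^2} - Bm + \frac{B^2 r^2}{4}.
\]

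The first step, and the heart of the matter, is to show that at $B=1$ the ground state lies entirely in a single sector, namely $m = 1$. A direct computation gives
\[
\Bigl(\frac{m}{r} - \frac{r}{2}\Bigr)^2 - \Bigl(\frac{1}{r} - \frac{r}{2}\Bigr)^2 = \frac{m^2 - 1}{r^2} - (m - 1),
\]
and I would verify by an elementary case split that this quantity is strictly positive on $(1, \Ro)$ for every $m\in\mathbb{Z}\setminus\{1\}$: the cases $m=0$ and $m\leq -1$ are immediate, and for $m\geq 2$ the inequality reduces to $r^2 < m+1$, whose binding case $m=2$ requires $r^2 < 3$ and is comfortably implied by $\Ro<\sqrt{2}$. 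Hence $\Ham_m(1) > \Ham_1(1)$ in the quadratic form sense for all $m\neq 1$, and \eqref{eq:infmeig} shows that $\eigone{\Ham(1)} = \eigone{\Ham_1(1)}$ is attained uniquely in the $m=1$ sector.

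Since the sector minima depend continuously on $B$ and $\eigone{\Ham_m(B)} \to \infty$ as $|m|\to\infty$, this strict ordering persists on an open neighborhood of $B=1$, on which $\eigone{\Ham(B)} = \eigone{\Ham_1(B)}$ is a real-analytic simple eigenvalue (simplicity follows from Sturm--Liouville theory within the sector). Feynman--Hellmann, applied to the normalized ground state $\psi_1$ of $\Ham_1(1)$ in $L^2((1,\Ro), r\,dr)$, then gives
\[
\frac{d}{dB}\eigone{\Ham(B)}\bigg|_{B=1} = \int_1^{\Ro}\Bigl(\frac{r^2}{2} - 1\Bigr) |\psi_1(r)|^2 \, r\, dr,
\]
since $\partial_B \Ham_m(B) = 2(m/r - Br/2)(-r/2)$ evaluates at $m=B=1$ to $r^2/2 - 1$. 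This integral is strictly negative because $r<\Ro<\sqrt{2}$ makes the integrand pointwise negative, yielding the claim.

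The only delicate ingredient is the sector identification, since at the critical field $B=1$ the limiting circle problem of radius $\Ri=1$ has a degeneracy between $m=0$ and $m=1$; it is precisely the fact that the annulus lies on the outer side $r>1$ that breaks this degeneracy in favor of $m=1$. The sharpness of the upper bound $\Ro < \sqrt{2}$ is convenient because the single inequality $r^2<2$ then controls both the sector comparison and the sign of the Feynman--Hellmann integrand.
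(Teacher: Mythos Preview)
Your argument is correct and follows essentially the same route as the paper: identify the ground state sector at $B=1$ by a pointwise comparison of the potentials $(m/r - r/2)^2$ and $(1/r - r/2)^2$, then apply a Feynman--Hellmann/perturbation computation to the resulting simple eigenvalue in the $m=1$ fiber and observe that the integrand $r^2/2 - 1$ is negative on $(1,\Ro)$ since $\Ro<\sqrt{2}$. Your case split for the potential comparison is in fact a bit tidier than the paper's, and your closing remark that the single bound $r^2<2$ simultaneously handles the sector identification and the sign of the derivative is a nice observation not made explicit there.
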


One might suspect that some properties of $\Ham(B)$ are carried over to some
model problem on the circle, as $\Ro\searrow\Ri$. Let $\Ang(B)$ be the 
self-adjoint operator
\begin{equation}
\label{eq:angb}
\Ang(B)=\Bigl(\frac{i}{\Ri}\frac{d}{d\theta}-\frac{B\Ri}{2}\Bigr)^2
\end{equation}
in $L^2\bigl((0,2\pi)\bigr)$ with periodic boundary conditions. Its spectrum 
is easily seen to consist of eigenvalues
$\big\{\bigl(\frac{m}{\Ri}-\frac{B\Ri}{2}\bigr)^2\big\}_{m\in\mathbb{Z}}$. 
In particular
\[
\eigone{\Ang(B)}=\min_{m\in\mathbb{Z}}
\Bigl(\frac{m}{\Ri}-\frac{B\Ri}{2}\Bigr)^2.
\]

Our next theorem states that $\eigone{\Ham(B)}$ will tend to 
$\eigone{\Ang(B)}$ as $\Ro\searrow\Ri$.

\begin{thm}
\label{thm:main2}
Let $B>0$. Then
\[
\lim_{\Ro\searrow\Ri}\eigone{\Ham(B)} 
= \eigone{\Ang(B)}
= \min_{m\in\mathbb{Z}}\Bigl(\frac{m}{\Ri}-\frac{B\Ri}{2}\Bigr)^2.
\]
\end{thm}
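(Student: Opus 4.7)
The plan is to prove matching upper and lower bounds for the limit, using the Fourier decomposition \eqref{eq:infmeig}, together with the explicit form of $\q_m$ from \eqref{eq:quad} specialized to $\beta\equiv 1$ (so $a(r)=r/2$). Write $\mu:=\min_{m\in\mathbb{Z}}(m/\Ri-B\Ri/2)^2$ and fix $B>0$.

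For the upper bound I would use the constant $\psi_m(r)\equiv c$ on $(\Ri,\Ro)$, normalized in $L^2(r\,dr)$, as a trial function in $\q_m$. Since $\psi_m'\equiv 0$, the Rayleigh quotient is
\[
\q_m[\psi_m]=\frac{\int_{\Ri}^{\Ro}\bigl(\tfrac{m}{r}-\tfrac{Br}{2}\bigr)^{2}r\,dr}{\int_{\Ri}^{\Ro} r\,dr},
\]
which is a continuous function of $(\Ri,\Ro)$ and tends to $(m/\Ri-B\Ri/2)^2$ as $\Ro\searrow\Ri$. Minimizing over $m$ gives $\limsup_{\Ro\searrow\Ri}\eigone{\Ham(B)}\le\mu$.

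For the lower bound the key estimate is the pointwise form bound
\[
\q_m[\psi]\ \ge\ \Bigl(\inf_{r\in(\Ri,\Ro)}\bigl(\tfrac{m}{r}-\tfrac{Br}{2}\bigr)^{2}\Bigr)\,\|\psi\|^{2},
\]
so that $\eigone{\Ham_m(B)}\ge\inf_{r\in(\Ri,\Ro)}(m/r-Br/2)^2$. The main obstacle is that \eqref{eq:infmeig} takes an infimum over \emph{all} integers $m$, so pointwise-in-$m$ convergence is not by itself enough. To get around this I would exploit the growth of the centrifugal part for large $|m|$: shrink $\Ro$ so that $\Ro\le 2\Ri$, choose $M$ with $M^{2}/(4\Ro^{2})>\mu+1$, and observe that for $|m|>M$ one has $|m/r-Br/2|\ge|m|/(2\Ro)$ on $(\Ri,\Ro)$, giving $\eigone{\Ham_m(B)}\ge\mu+1$. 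For the finitely many remaining $m$ with $|m|\le M$, the pointwise convergence
\[
\inf_{r\in(\Ri,\Ro)}\bigl(\tfrac{m}{r}-\tfrac{Br}{2}\bigr)^{2}\ \xrightarrow[\Ro\searrow\Ri]{}\ \bigl(\tfrac{m}{\Ri}-\tfrac{B\Ri}{2}\bigr)^{2}
\]
is automatic, and a minimum of finitely many converging sequences converges to the minimum of the limits. Hence $\liminf_{\Ro\searrow\Ri}\eigone{\Ham(B)}\ge\min_{|m|\le M}(m/\Ri-B\Ri/2)^2=\mu$, since $M$ was chosen large enough to include the minimizer of $\mu$.

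Combining the two bounds gives the claim. The only real subtlety is the uniformity in $m$ for the lower bound, which is handled by the centrifugal trick above; the remaining manipulations are elementary continuity and normalization computations with the explicit gauge $a(r)=r/2$.
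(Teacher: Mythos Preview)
Your approach coincides with the paper's: constant trial state for the upper bound, and the potential bound $\q_m[\psi]\ge\min_{r\in[\Ri,\Ro]}(m/r-Br/2)^2$ for the lower bound. You are in fact more explicit than the paper about the uniformity-in-$m$ needed to pass from pointwise convergence of $\eigone{\Ham_m(B)}$ to convergence of $\inf_m\eigone{\Ham_m(B)}$; the paper simply asserts this step.

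One small correction to your centrifugal estimate: the inequality $|m/r-Br/2|\ge |m|/(2\Ro)$ is automatic for $m\le 0$, but for $m>0$ it is equivalent to $m/(2\Ro)\ge B\Ro/2$, i.e.\ $m\ge B\Ro^2$. Your stated condition $M^2/(4\Ro^2)>\mu+1$ does not by itself force this. Simply add the requirement $M\ge B\Ro^2$ (a fixed constant once $\Ro\le 2\Ri$) and the argument goes through unchanged.
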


\begin{remark}
As a direct consequence of Theorem~\ref{thm:main2} it is possible to find an 
annulus such that the function $B\mapsto \eigone{\Ham(B)}$ is monotonically 
increasing and decreasing alternatively as many times as desired.
\end{remark}

\begin{remark}
Another direct consequence of Theorem~\ref{thm:main2} is that, although the
diamagnetic inequality tells us that $\eigone{\Ham(B)}>\eigone{\Ham(0)}=0$
for all $B>0$ we can actually get $\eigone{\Ham(B)}$ to be arbitrary close
to zero if $B=2m$, $m=1,2,\ldots$ by choosing $\Ro$ close enough to $\Ri$.
\end{remark}

\begin{remark}
Theorem~\ref{thm:main2} can easily be extended to thin cylinders in three 
dimensions, since the third variable then separates. 
\end{remark}

\subsection{Nonmonotonicity in the annulus}

In this section we will prove the spectral 
results Theorem~\ref{thm:mainlin} and Theorem~\ref{thm:main2}. We will work in 
polar coordinates.

\begin{proof}[Proof of Theorem~\ref{thm:mainlin}]
We recall that here $\Ri=1$. Let
\begin{equation}
\label{eq:pot}
p_{m,B}(r)=\Bigl(\frac{m}{r}-\frac{Br}{2}\Bigr)^2
\end{equation}
denote the potential in the quadratic form $\q_m$ in~\eqref{eq:quad}.

We start by showing that if $\Ro>1$ and $m\in\mathbb{Z}\setminus\{1\}$ then 
\begin{equation}
\label{eq:misone}
\eigone{\Ham_m(1)}>\eigone{\Ham_1(1)}.
\end{equation}

The function $f(r)=p_{m,1}(r)-p_{1,1}(r)$ is positive for 
$r>1$. Indeed, $f(r)=1-m+(m^2-1)/r^2$. If $m\not\in\{0,1\}$ then $f$ is 
decreasing, and $f(r)\geq f(1)=m^2-m >0$. If $m=0$ then $f(r)=1-1/r^2$ which is 
clearly positive for all $r>1$. The inequality~\eqref{eq:misone} follows by a 
comparison of quadratic forms.

Next, we show that if $1<\Ro<\sqrt{2m/B}$ then 
\begin{equation}
\label{eq:misdec}
\frac{d}{d B}\eigone{\Ham_m(B)}<0.
\end{equation}

By perturbation theory it holds that
\begin{equation}
\label{eq:pert}
\frac{d}{d B}\eigone{\Ham_m(B)} 
= \int_{1}^{\Ro}\Bigl(\frac{Br^2}{2}-m\Bigr)u(r)^2 r\, dr,
\end{equation}
where $u$ denotes the eigenfunction corresponding to $\eigone{\Ham_m(B)}$.
Moreover the factor $\bigl(\frac{Br^2}{2}-m\bigr)$ is negative for all $1<r<\Ro$ 
if $\Ro<\sqrt{2m/B}$. Inserting this into~\eqref{eq:pert} gives~\eqref{eq:misdec}

It is now easy to finish the proof of Theorem~\ref{thm:mainlin}. Let 
$1<\Ro<\sqrt{2}$. Inequality~\eqref{eq:misone} and analytic perturbation theory 
imply that
\[
\eigone{\Ham(B)}=\eigone{\Ham_1(B)}
\]
for $B$ in a neighborhood of $1$. Since, by~\eqref{eq:misdec}, it holds
that the derivative of $\eigone{\Ham_1(B)}$ is negative at $B=1$ the same is
true for the derivative of $\eigone{\Ham(B)}$. By continuity of the derivative
this holds in a neighborhood of $B=1$. In particular we conclude that the 
function $B\mapsto\eigone{\Ham(B)}$ is strictly decreasing for these values 
of $B$.
\end{proof}

\begin{proof}[Proof of Theorem~\ref{thm:main2}]
Since
\[
\eigone{\Ham(B)}=\inf_m \eigone{\Ham_m(B)},
\]
Theorem~\ref{thm:main2} is a direct consequence of the fact that,
for $m\in\mathbb{Z}$ and $B\geq 0$,
\begin{equation}
\label{eq:limRoRi}
\lim_{\Ro\searrow\Ri} \eigone{\Ham_m(B)} 
= \Bigl(\frac{m}{\Ri}-\frac{B\Ri}{2}\Bigr)^2.
\end{equation}

To get an upper bound we use a trial state. In fact, we use the simplest possible
one. Let $u=\sqrt{2/(\Ro^2-\Ri^2)}$. Then $\|u\|_{L^2((\Ri,\Ro), rdr)}=1$. A 
simple calculation shows that
\[
\begin{aligned}
\lim_{\Ro\searrow\Ri}\q_m[u] 
&= \lim_{\Ro\searrow\Ri}
\biggl(
\frac{2m^2}{\Ro+\Ri}\frac{\log\Ro-\log\Ri}{\Ro-\Ri}
-Bm
+\frac{B^2}{8}\bigl(\Ri^2+\Ro^2\bigr)
\biggr)\\
& = \Bigl(\frac{m}{\Ri}-\frac{B\Ri}{2}\Bigr)^2.
\end{aligned}
\]
Hence $\lim_{\Ro\searrow\Ri} \eigone{\Ham_m(B)} 
\leq \bigl(\frac{m}{\Ri}-\frac{B\Ri}{2}\bigr)^2$.

The lower bound is obtained by using the potential $p_{m,B}(r)$.
Let $u$ be a normalized eigenfunction corresponding to $\eigone{\Ham_m(B)}$. 
then
\[
\eigone{\Ham_m(B)} = \q_m[u] \geq 
\int_{\Ri}^{\Ro} \Bigl(\frac{Br}{2}-\frac{m}{r}\Bigr)^2|u|^2 r\,dr
\geq \min_{\Ri\leq r\leq \Ro}\Bigl(\frac{Br}{2}-\frac{m}{r}\Bigr)^2.
\]
Since $\min_{\Ri\leq r\leq \Ro}\bigl(\frac{Br}{2}-\frac{m}{r}\bigr)^2
\to \bigl(\frac{m}{\Ri}-\frac{B\Ri}{2}\bigr)^2$ as 
$\Ro\searrow\Ri$ we conclude that
\[
\lim_{\Ro\searrow \Ri} \eigone{\Ham_m(B)} 
\geq \Bigl(\frac{m}{\Ri}-\frac{B\Ri}{2}\Bigr)^2.
\]
This completes the proof of~\eqref{eq:limRoRi}, and thus 
finishes the proof of Theorem~\ref{thm:main2}.
\end{proof}

\subsection{Application to the Ginzburg--Landau functional}

In this section we prove Theorem~\ref{thm:main3}. We recall the reader 
that $D$ below denotes the disc with radius $\Ro$, centered at the 
origin. We need the following lemma, and refer to~\cite{fohebook} for 
its proof. 

\begin{lemma}\label{lem:curl}
Let $\Ri$ be fixed and let $\Ri\leq \Ro \leq 2$. There exists a constant 
$\widehat{C}>0$ (independent of $\Ro$) such that for all 
$\mathbf{a}\in H^1_{\Div}(D)$ we have
\[
\|\mathbf{a}\|_{L^2(D)}\leq 
\widehat{C}\|\curl\mathbf{a}\|_{L^2(D)}.
\]
Combining this with the Sobolev embedding we get the existence of a constant 
$\widehat{C}_0$ (independent of 
$\Ro \in [\Ri,2]$) such that for all $\mathbf{a}\in H^1_{\Div}(D)$
\begin{equation}
\label{eq:Sobolev}
\|\mathbf{a}\|_{L^4(D)}\leq 
\widehat{C}_0\|\curl\mathbf{a}\|_{L^2(D)}.
\end{equation}
\end{lemma}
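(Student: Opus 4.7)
The plan is to reduce both inequalities to standard elliptic estimates on a fixed reference disc via the stream-function representation combined with a simple rescaling argument.

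First, since $D=B(0,\Ro)$ is simply connected and $\mathbf{a}\in H^1_{\Div}(D)$ is divergence-free with $\mathbf{a}\cdot N=0$ on $\partial D$, I would write $\mathbf{a}=\nabla^\perp\phi:=(-\partial_{x_2}\phi,\partial_{x_1}\phi)$ via a stream function $\phi$. The divergence-free condition is precisely the integrability condition producing such a $\phi\in H^2(D)$, and the no-flux boundary condition forces $\phi$ to be constant on $\partial D$, so we may normalize to $\phi\in H^2(D)\cap H^1_0(D)$. Since $|\mathbf{a}|=|\nabla\phi|$ and $\curl\mathbf{a}=\Delta\phi$, Lemma~\ref{lem:curl} becomes
\[
\|\nabla\phi\|_{L^2(D)}\le \widehat{C}\,\|\Delta\phi\|_{L^2(D)},\qquad \|\nabla\phi\|_{L^4(D)}\le \widehat{C}_0\,\|\Delta\phi\|_{L^2(D)},
\]
for $\phi\in H^2(D)\cap H^1_0(D)$, uniformly in $\Ro\in[\Ri,2]$.

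Next, I would transfer the problem to the unit disc $B(0,1)$ via $y=x/\Ro$, $\tilde\phi(y):=\phi(\Ro y)$, which again lies in $H^2(B(0,1))\cap H^1_0(B(0,1))$. A direct change of variables yields $\|\nabla_x\phi\|_{L^2(D)}=\|\nabla_y\tilde\phi\|_{L^2(B(0,1))}$, $\|\nabla_x\phi\|_{L^4(D)}=\Ro^{-1/2}\|\nabla_y\tilde\phi\|_{L^4(B(0,1))}$, and $\|\Delta_x\phi\|_{L^2(D)}=\Ro^{-1}\|\Delta_y\tilde\phi\|_{L^2(B(0,1))}$. This reduces both inequalities to the corresponding estimates on the fixed disc $B(0,1)$; the surviving $\Ro$-factors (a single $\Ro$ and a $\Ro^{1/2}$) are bounded by $2$ on $[\Ri,2]$, which is where the uniformity comes from.

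On $B(0,1)$ the $L^2$ estimate is immediate from Dirichlet--Poincar\'e and integration by parts:
\[
\|\nabla\tilde\phi\|_{L^2}^2=-\int_{B(0,1)}\tilde\phi\,\Delta\tilde\phi\,dy\le \|\tilde\phi\|_{L^2}\|\Delta\tilde\phi\|_{L^2}\le C_P\|\nabla\tilde\phi\|_{L^2}\|\Delta\tilde\phi\|_{L^2}.
\]
For the $L^4$ bound I would invoke standard elliptic $H^2$-regularity, $\|D^2\tilde\phi\|_{L^2}\le C\|\Delta\tilde\phi\|_{L^2}$ for $\tilde\phi\in H^1_0\cap H^2$ on the smooth domain $B(0,1)$, and then the two-dimensional Sobolev embedding $H^1(B(0,1))\hookrightarrow L^4(B(0,1))$ applied to $\nabla\tilde\phi$. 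There is no real obstacle beyond the routine book-keeping of the $\Ro$-powers produced by the rescaling, which confirms that the constants are indeed uniform on $[\Ri,2]$.
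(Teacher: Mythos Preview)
Your argument is correct. The stream-function representation is exactly the right tool here: on the simply connected disc $D$, the conditions $\Div\mathbf{a}=0$ and $\mathbf{a}\cdot N=0$ let you write $\mathbf{a}=\nabla^\perp\phi$ with $\phi\in H^2\cap H^1_0$, reducing the lemma to the standard Dirichlet estimate $\|\nabla\phi\|_{L^2}\le C\|\Delta\phi\|_{L^2}$ and its $L^4$ upgrade via $H^2$-regularity and the two-dimensional embedding $H^1\hookrightarrow L^4$. The rescaling to the unit disc is the clean way to make the uniformity in $\Ro\in[\Ri,2]$ explicit, and your bookkeeping of the $\Ro$-powers is accurate.

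As for comparison with the paper: the paper does not actually give a proof of this lemma. It states the result and refers the reader to \cite{fohebook} for the argument. So there is nothing to compare against here; your write-up simply supplies details the authors chose to outsource. The approach you take is in fact the standard one used in that reference (gauge/stream-function reduction followed by elliptic and Sobolev estimates), so there is no methodological divergence either.
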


\begin{proof}[Proof of Theorem~\ref{thm:main3}]
Given $0<\epsilon<1$, the Cauchy inequality implies that
\[
|(i\nabla+\kappa \sigma \mathbf{A})\psi|^2
\geq (1-\epsilon)|(i\nabla+\kappa \sigma \mathbf{F})\psi|^2
- \epsilon^{-1} (\kappa \sigma )^2|\mathbf{A}-\mathbf{F}|^2|\psi|^2,
\]
and so
\begin{align}
\mathcal{G}_{\kappa,\sigma }(\psi,\mathbf{A}) &\geq 
\int_{\Omega}(1-\epsilon)|(i\nabla+\kappa \sigma \mathbf{F})\psi|^2 
-\kappa^2|\psi|^2+\frac{\kappa^2}{2}|\psi|^4\, dx\nonumber \\
&\quad- \epsilon^{-1} (\kappa \sigma)^2
\int_{\Omega}|\mathbf{A}-\mathbf{F}|^2|\psi|^2\,dx 
+(\kappa \sigma)^2\int_{D}|\curl\mathbf{A}-1|^2\, dx \nonumber \\
&\geq \bigl((1-\epsilon) \eigone{\Ham(\kappa \sigma)} 
- \kappa^2\bigr) \|\psi \|_{L^2(\Omega)}^2\nonumber \\
&\quad
- \epsilon^{-1} (\kappa \sigma)^2 \| \mathbf{A} 
- \mathbf{F} \|_{L^4(D)}^2 \| \psi \|_{L^4(\Omega)}^2
+(\kappa \sigma)^2\int_{D}|\curl\mathbf{A}-1|^2\, dx \nonumber \\
&\geq
\bigl((1-\epsilon) \eigone{\Ham(\kappa \sigma)} 
- \kappa^2\bigr) \|\psi \|_{L^2(\Omega)}^2\nonumber \\
&\quad
+(\kappa \sigma)^2 
\Bigl( 1 - \widehat{C}_0^2 \epsilon^{-1} \sqrt{\pi} (\Ro^2-\Ri^2)^{1/2}\Bigr) 
\int_{D}|\curl\mathbf{A}-1|^2\, dx.
\end{align}
Here we used~\eqref{eq:Sobolev} and $\|\psi\|_\infty\leq 1$ to get the last 
inequality.

If we choose $\epsilon = (\Ro-\Ri)^{1/4}$, then we see that if 
$\eigone{\Ang(\kappa \sigma)} > \kappa^2$, then for all $\Ro$ sufficiently close 
to $\Ri$ and all $(\psi, \mathbf{A})$,
\begin{align}\label{eq:trivial}
\mathcal{G}_{\kappa,\sigma }(\psi,\mathbf{A}) &\geq 
0.
\end{align}

On the other hand, if $\eigone{\Ham(B = \sigma \kappa)}<\kappa^2$, then
we have (with $\mathbf{F}=1/2(-x_2,x_1)$ and $u$ the normalized eigenfunction
corresponding to $\eigone{\Ham(\sigma\kappa)}$)
\begin{align}\label{eq:Nontrivial}
\mathcal{G}_{\kappa,\sigma}(cu,\mathbf{F}) 
= c^2(\eigone{\Ham(\sigma \kappa)}-\kappa^2)
+c^4\frac{\kappa^2}{2}\int_\Omega |u|^4\,dx < 0
\end{align}
for sufficiently small values of $c$.

Therefore, by the explicit spectrum of $\Ang(B)$ we can choose $\kappa_0>0$ 
and $B_0<B_1<B_2$ such that
\[
\eigone{\Ang(B_j)} < \kappa_0^2, \qquad j=0,2,\qquad
\eigone{\Ang(B_1)} > \kappa_0^2.
\]
Define $\sigma_j := B_j/\kappa_0$.
By the convergence of the spectrum given in Theorem~\ref{thm:main2} 
and~\eqref{eq:Nontrivial} we find the existence of $\widetilde R>\Ri$ such that 
${\mathcal G}_{\kappa_0, \sigma_j}$ has a non-trivial minimizer for all 
$\Ri<\Ro \leq \widetilde R$ and $j\in \{0,2\}$.
On the other hand, it follows from \eqref{eq:trivial} that the minimizer of 
${\mathcal G}_{\kappa_0, \sigma_1}$ is trivial for all $\Ro>\Ri$ sufficiently 
close to $\Ri$.

We conclude the existence of $\Ro>\Ri$ such that there exist non-trivial minimizers 
when $\sigma=\sigma_0$ and $\sigma=\sigma_2$ but not when $\sigma=\sigma_1$. 
Since $\sigma_0<\sigma_1<\sigma_2$ it is clear that $\mathcal{N}(\kappa_0)$ is 
not an interval.
\end{proof}

\section{The case of the complement of the disc}
\label{sec:section4}
\subsection{Introduction}
In this section we consider the case $\Omega=\{x\in\R^2~:~|x|>1\}$ and assume that the magnetic field satisfies Assumption~\ref{ass:Magnetic} with $\delta>0$.
Our aim is to prove Theorem~\ref{thm:extasymptot}.

\subsection{Localization estimate}
Before continuing we give an Agmon estimate for the lowest eigenfunction.

\begin{prop}
\label{prop:Agmon_Outside}
Assume that $\beta$ satisfies Assumption~\ref{ass:Magnetic} with $\delta>0$.
Let $t \in (0,1)$. Then there exist positive 
constants $C$, $a$ and $B_0$ such that if $B>B_0$, and if $\psi$ is an 
eigenfunction of $\Ham(B)$ corresponding to an eigenvalue 
$\lambda \leq t \delta B$.  then 
\begin{equation}
\label{eq:agmon_surface}
\int_{\{|x|>1\}}\exp\bigl(a B^{1/2}\bigl||x|-1\bigr|\bigr)
\bigl(|\psi|^2 + B^{-1} |(-i\nabla + B\mathbf{F}) \psi|^2 \bigr)\,dx
\leq C \int_{\{|x|>1\}}|\psi|^2\,dx.
\end{equation}
\end{prop}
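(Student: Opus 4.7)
The plan is the standard magnetic Agmon technique. I would begin from the identity that for any bounded real Lipschitz weight $\Phi$ on $\Omega$ and any eigenfunction $\Ham(B)\psi=\lambda\psi$, the magnetic Neumann boundary condition kills the boundary terms in integration by parts, producing
\[
\int_{\Omega} \bigl|(-i\nabla + B\mathbf{F})(e^{\Phi}\psi)\bigr|^{2}\,dx
= \int_{\Omega} \bigl(\lambda + |\nabla\Phi|^{2}\bigr) e^{2\Phi}|\psi|^{2}\,dx.
\]
I would take $\Phi(x)=aB^{1/2}\varrho(|x|-1)$, with $\varrho$ a smooth bounded truncation equal to the identity on a large set; the truncation is removed at the very end by Fatou's lemma, and it produces $|\nabla\Phi|^{2}=a^{2}B$ on the targeted region, which matches the decay rate we aim to prove.

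The core step is to bound the left-hand side from below. I would introduce an IMS partition $\chi_{0}^{2}+\chi_{1}^{2}\equiv 1$ with $\chi_{0}$ supported in $\{1\leq|x|\leq 1+2\eta\}$ and $\chi_{1}$ in $\{|x|\geq 1+\eta\}$ for a small fixed $\eta>0$; the IMS commutator error is $O(1)$, negligible against the factor $B$ we will produce. On $\mathrm{supp}\,\chi_{1}$ the function $\chi_{1}e^{\Phi}\psi$ is compactly supported away from $\partial\Omega$, so extending by zero to $\R^{2}$ the standard magnetic bulk bound
\[
\int \bigl|(-i\nabla + B\mathbf{F})u\bigr|^{2}\,dx \geq B\int \beta(x)|u|^{2}\,dx,
\]
valid for $u\in H^{1}(\R^{2})$, is available and yields a lower bound $\geq B\inf_{|x|\geq 1+\eta}\beta\cdot\|\chi_{1}e^{\Phi}\psi\|^{2}$. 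Combining this with the Agmon identity, dropping the $\chi_{0}$ kinetic contribution, and using that $\Phi$ is bounded on $\mathrm{supp}\,\chi_{0}$, one obtains
\[
\bigl(B\inf_{|x|\geq 1+\eta}\beta - \lambda - a^{2}B - O(1)\bigr)\int |\chi_{1}e^{\Phi}\psi|^{2}\,dx
\leq C_{\eta}\int |\psi|^{2}\,dx.
\]
Since $\tilde\beta'(1)=0$ one has $\inf_{|x|\geq 1+\eta}\beta\to\delta$ as $\eta\downarrow 0$; choosing $\eta$ small, then $a^{2}<(1-t)\delta$ with a safety margin, and finally $B_{0}$ large, the prefactor becomes $\geq cB>0$, which is the weighted $L^{2}$ part of the estimate.

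The $H^{1}$-type bound with the $B^{-1}$ factor then drops out by applying the Agmon identity once more: its right-hand side is now $O(B)\|\psi\|^{2}$ thanks to the $L^{2}$ decay already proved, so the same holds for the left-hand side, and the pointwise inequality
\[
|(-i\nabla + B\mathbf{F})\psi|^{2} \leq 2 e^{-2\Phi}\bigl|(-i\nabla + B\mathbf{F})(e^{\Phi}\psi)\bigr|^{2} + 2|\nabla\Phi|^{2}|\psi|^{2}
\]
multiplied by $e^{2\Phi}$ and integrated closes the argument. The main obstacle is the coupled tuning of $\eta$ and $a$: the barrier $B\inf_{|x|\geq 1+\eta}\beta$ must strictly dominate $\lambda+a^{2}B$, which for $t$ close to $1$ relies on $\tilde\beta'(1)=0$ to keep $\inf_{|x|\geq 1+\eta}\beta$ close to $\delta$, while Assumption~\ref{ass:Magnetic} via $\Theta_{0}\delta<\inf_{\Omega}\beta$ rules out a distant dip of $\beta$ that would spoil the bulk estimate.
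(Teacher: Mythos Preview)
Your approach is the standard Agmon--IMS scheme and is precisely what the paper has in mind (the paper omits the proof, pointing to the book~\cite{fohebook} and to the analogous argument for $\R^2$ in Section~\ref{sec:planeg0}).

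There is, however, a real gap in your tuning step. You assert that $\inf_{|x|\geq 1+\eta}\beta\to\delta$ as $\eta\downarrow 0$, invoking $\tilde\beta'(1)=0$. That derivative condition is local and says nothing about the infimum over the \emph{unbounded} region $\{|x|\geq 1+\eta\}$; Assumption~\ref{ass:Magnetic} for the exterior domain only guarantees $\inf_{\Omega}\beta>\Theta_0\delta$, so $\tilde\beta$ is allowed to dip to, say, $(\Theta_0+\varepsilon)\delta$ somewhere far from the boundary and stay there. Your barrier inequality $B\inf_{|x|\geq 1+\eta}\beta-\lambda-a^{2}B>0$ then only closes when $t<\inf_{\Omega}\beta/\delta$, not for every $t\in(0,1)$. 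The remark at the end that the assumption ``rules out a distant dip'' overstates what $\Theta_0\delta<\inf_\Omega\beta$ buys you.

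This is not merely a defect of your method: for $t>\inf_{\Omega}\beta/\delta$ the proposition as stated can genuinely fail. If $\tilde\beta$ has a local well with value below $t\delta$ at some $r_0\gg 1$ (and rises again at infinity so the essential spectrum is high), there will be eigenvalues $\lambda\leq t\delta B$ whose eigenfunctions concentrate near $r_0$, and for those the weight $\exp(aB^{1/2}(|x|-1))$ centered at the boundary blows up. So your argument is essentially optimal; it proves the estimate for any fixed $t$ with $\Theta_0<t<\inf_{\Omega}\beta/\delta$, and such $t$ exist by the assumption. That range already covers every use of the proposition in the paper (Lemmas~\ref{lem:RestrictionOnM}--\ref{lem:FurtherRestrictionOnM} only need eigenvalues near $\Theta_0\delta B$). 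You should simply replace the incorrect limit claim by the straightforward bound $\inf_{|x|\geq 1+\eta}\beta\geq\inf_{\Omega}\beta>\Theta_0\delta$ and restrict $t$ accordingly.
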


Theorem~8.2.4 in~\cite{fohebook} gives the same estimate with the
restriction that the domain should be bounded.
However, since we give a similar Agmon estimate in Section~\ref{sec:planeg0} 
with proof we omit the proof here.

\subsection{A detailed expansion}
We recall that the quadratic form after decomposition is given by (with $a(r)$ from \eqref{eq:Def_a})
\[
\q_m[u]=\int_{1}^{+\infty}\Bigl(|u'(r)|^2+
\Bigl(\frac{m}{r}-B a(r)\Bigr)^2|u(r)|^2\Bigr)r\,dr.
\]
Notice that at $r=1$ the potential takes the value
\[
\Bigl(\frac{m}{r}-B a(r)\Bigr)^2\Big|_{r=1} 
= (m-\Phi B)^2.
\]
This suggests that we will find the lowest energy for 
$m\approx \Phi B$. That this is the case is the content of the following Lemma.

\begin{lemma}\label{lem:RestrictionOnM}
Let $t \in (0,1)$.
Suppose $\psi = u_m e^{-im \theta}$ is an eigenfunction of $\Ham(B)$ with 
eigenvalue $\lambda \leq t \delta B$. Then 
\[
m = \Phi B + {\mathcal O}(B^{1/2}).
\]
\end{lemma}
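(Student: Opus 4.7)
The plan is to combine the Agmon localization of Proposition~\ref{prop:Agmon_Outside} with a direct pointwise lower bound on the magnetic potential $V(r) := (m/r - Ba(r))^2$. The key point is that the quadratic form $\q_m$ controls $\int V|u_m|^2 r\,dr$, so if I can extract a lower bound of the form $c(m-\Phi B)^2 + (\text{lower order})$ on the support of $u_m$, then the assumption $\lambda \le t\delta B$ will force $(m-\Phi B)^2 = \Oh(B)$.

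First, I note that Proposition~\ref{prop:Agmon_Outside} descends to each Fourier sector, since the weight $e^{aB^{1/2}|r-1|}$ is radial and hence acts diagonally in the angular decomposition. Applied to $\psi = u_m(r)e^{-im\theta}/\sqrt{2\pi}$, this yields
\begin{equation*}
\int_1^{\infty} e^{aB^{1/2}(r-1)}|u_m(r)|^2 r\,dr \le C\|u_m\|^2,
\end{equation*}
from which I extract, for any fixed $\epsilon_0 > 0$ and all $B$ large, the two consequences $\int_1^{1+\epsilon_0}|u_m|^2 r\,dr \ge \tfrac{1}{2}\|u_m\|^2$ and $\int_1^{\infty}(r-1)^2|u_m|^2 r\,dr \le C' B^{-1}\|u_m\|^2$ (the latter by estimating $\sup_r (r-1)^2 e^{-aB^{1/2}(r-1)} = \Oh(B^{-1})$). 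Next, writing $\mu := m-\Phi B$ and $h(r) := Bra(r) - \Phi B$, we have $V(r) = (\mu - h(r))^2/r^2$, and the expansion~\eqref{eq:rar_expansion} gives $|h(r)| \le C_0 B|r-1|$ on a fixed neighborhood $[1, 1+\epsilon_0]$. The elementary inequality $(\mu-h)^2 \ge \tfrac{1}{2}\mu^2 - h^2$, combined with $1 \le r \le 1+\epsilon_0$, then produces
\begin{equation*}
V(r) \ge \frac{\mu^2}{2(1+\epsilon_0)^2} - C_0^2 B^2 (r-1)^2 \qquad \text{for } r \in [1, 1+\epsilon_0].
\end{equation*}

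Integrating this inequality against $|u_m|^2 r\,dr$ and invoking the two Agmon-based bounds yields
\begin{equation*}
\lambda \|u_m\|^2 = \q_m[u_m] \ge \int_1^{1+\epsilon_0} V(r)|u_m|^2 r\,dr \ge \frac{\mu^2}{4(1+\epsilon_0)^2}\|u_m\|^2 - C'' B\|u_m\|^2,
\end{equation*}
so $\mu^2 \le 4(1+\epsilon_0)^2(\lambda + C'' B) = \Oh(B)$, which is the claimed bound. The only real obstacle is controlling the error term $B^2\int(r-1)^2|u_m|^2 r\,dr$, but this is precisely where the $B^{-1/2}$ Agmon length-scale does its work: the $(r-1)^2$ factor gives a $B^{-1}$ gain that offsets the $B^2$ prefactor, leaving an $\Oh(B)$ remainder that is comfortably dominated by the leading $\mu^2$ term as soon as $|\mu| \gg B^{1/2}$.
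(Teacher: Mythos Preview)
Your proof is correct and follows essentially the same route as the paper: drop the kinetic term, rewrite the potential as $(m-Bra(r))^2/r^2$, use the linear bound $|Bra(r)-\Phi B|\le CB(r-1)$ together with $(\mu-h)^2\ge \tfrac12\mu^2-h^2$, and then invoke the Agmon estimate to control the $B^2\int(r-1)^2|u_m|^2\,r\,dr$ error by $\Oh(B)$. The paper's version is terser (it writes the chain of inequalities in one display and restricts to $r\in(1,2)$ rather than $(1,1+\epsilon_0)$), but the argument is the same.
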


\begin{proof}
We neglect the kinetic energy in the expression for $\q_m$. 
Recall the calculation~\eqref{eq:Cancellation}.
For $1<r<2$, we get
\begin{align}
\label{eq:intbetabound}
\Bigl| \int_0^{r-1} (1+s) \tilde\beta(1+s)\,ds \Bigr| \leq C (r-1),
\end{align}
so, estimating the quadratic form with the potential, 
combining~\eqref{eq:intbetabound} and~\eqref{eq:rar-split}, and using 
Proposition~\ref{prop:Agmon_Outside}, we get
\begin{align}\label{eq:quadratic}
 \q_m[u_m] & \geq
 \int_1^2 \frac{1}{r^2}\bigl(m-B r a(r)\bigr)^2 |u_m(r)|^2 r\, dr\\
 &\geq \int_1^2 \frac{1}{r^2}\Bigl[\frac{1}{2}(m-\Phi B)^2-(CB)^2(r-1)^2\Bigr]|u_m(r)|^2 r\, dr\\
 &\geq \frac{1}{8}(m-\Phi B)^2\bigl(1+\mathcal{O}(B^{-\infty})\bigr)
-\widetilde{C}B,
\end{align}
from which the lemma follows.
\end{proof}

\begin{lemma}
\label{lem:uniqueeigenvalue}
Let $t \in (0,1)$. There exists $B_0>0$ such that if $m \in {\mathbb Z}$ and 
$B\geq B_0$, then $\Ham_m(B)$ admits at most one eigenvalue below $t \delta B$.
\end{lemma}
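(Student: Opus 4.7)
I will argue by contradiction. Suppose $\Ham_m(B)$ has two $L^2((1,\infty),\,r\,dr)$-orthogonal eigenfunctions $u_1, u_2$ with eigenvalues $\leq t\delta B$. The functions $\psi_j := u_j(r)\,e^{-im\theta}/\sqrt{2\pi}$ are then two orthogonal eigenfunctions of $\Ham(B)$ to which Proposition~\ref{prop:Agmon_Outside} applies, so the $u_j$ are concentrated near $r=1$ on scale $B^{-1/2}$ with exponentially small tails. Lemma~\ref{lem:RestrictionOnM}, applied to either $\psi_j$, further forces the common angular momentum to satisfy $m = \Phi B + \mathcal{O}(B^{1/2})$.

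Next I would rescale by setting $\tau := \sqrt{\delta B}\,(r-1)$ and $v_j(\tau) := (\delta B)^{-1/4}\,u_j\!\bigl(1 + \tau/\sqrt{\delta B}\bigr)$, so that $v_1, v_2$ are $\mathcal{O}(B^{-1/2})$-close to orthonormal in $L^2(\R_+, d\tau)$. Using the Taylor expansion~\eqref{eq:rar_expansion} of $r a(r)$ at $r=1$, the fact that $r = 1 + \mathcal{O}(B^{-1/2})$ on the effective support of $v_j$, and the restriction on $m$, the fiber quadratic form transforms as
\[
\q_m[u_j] \;=\; \delta B \cdot \qD_\xi[v_j] \;+\; \mathcal{O}(\sqrt{B}) \, \|v_j\|_{L^2(\R_+)}^2,
\]
where $\xi := (m - \Phi B)/\sqrt{\delta B} = \mathcal{O}(1)$ and $\qD_\xi$ is the quadratic form of the de Gennes operator $\HamD(\xi) = -\partial_\tau^2 + (\tau - \xi)^2$ on $L^2(\R_+)$ with Neumann boundary condition at $\tau = 0$.

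Applying the min-max principle on $\Span\{v_1, v_2\}$ would then force the second eigenvalue $\mu_2(\xi)$ of $\HamD(\xi)$ to be bounded above by $t + o(1)$. This contradicts the classical spectral fact (recalled in Appendix~\ref{sec:dG}) that $\mu_2(\xi) \geq 1$ uniformly in $\xi \in \R$. Since $t < 1$, this yields the desired contradiction for $B$ sufficiently large, uniformly in the admissible range of $m$.

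The hardest part will be making the quadratic-form comparison bilateral and uniform in the bounded window of $\xi$ identified in Step~1, so that the min-max step really transfers the spectral information from the fiber operator to the de Gennes model. In particular, the $\mathcal{O}(\sqrt{B})$-error terms coming from~\eqref{eq:rar_expansion} and from the measure factor $r$ in $r\,dr$ must be controlled uniformly not just pointwise but on the entire two-dimensional span $\Span\{v_1, v_2\}$, with approximate orthogonality preserved up to negligible corrections.
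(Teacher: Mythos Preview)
Your proposal is correct and follows essentially the same route as the paper: assume two low-lying eigenfunctions, use the Agmon localization and the restriction $m=\Phi B+\mathcal{O}(B^{1/2})$, compare $\q_m$ on their span to the de~Gennes form $\qD_\xi$ up to an $\mathcal{O}(B^{1/2})$ error, and contradict the fact that only the first de~Gennes eigenvalue dips below $1$. The paper carries out the comparison without explicitly rescaling (it keeps the intermediate form $\widetilde{\q}_m$ in the $r$-variable and notes it is unitarily equivalent to a de~Gennes operator), whereas you perform the change of variables $\tau=\sqrt{\delta B}(r-1)$ explicitly; this is only a cosmetic difference. Your closing remark about needing the error and approximate orthonormality to hold uniformly on $\Span\{v_1,v_2\}$ is exactly the point the paper handles by writing the estimate for an arbitrary normalized $v$ in that span, so no new idea is required there --- a one-sided (lower) bound on $\q_m$ suffices for the min--max contradiction, and bilaterality is not needed.
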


\begin{proof}
Fix $\tilde{t}$ with $t<\tilde{t}<1$. By the lower bound 
\eqref{eq:quadratic}, we see that there exist $B_0, C_0 >0$ 
such that if $|m - \Phi B| \geq C_0 B^{1/2}$, then $\q_m 
\geq \tilde{t}\delta B$. 

So we will 
restrict attention to $m$'s such that $m = \Phi B + \Delta m$,
with $|\Delta m| \leq C_0 B^{1/2}$. Suppose, to get a contradiction, 
that $u_1, u_2$ are eigenfunctions of $\q_m$ corresponding to eigenvalues 
below $t \delta B$.

We write
\begin{align*}
\Bigl(\frac{m}{r}-B a(r)\Bigr)^2 = \frac{1}{r^2} 
\bigl( m - B r a(r) \bigr)^2,
\end{align*}
with 
\begin{align*}
r a(r) = \Phi + \delta (r-1) 
+ {\mathcal O}\bigl((r-1)^2\bigr),
\end{align*}
as $r \rightarrow 1$. So
\begin{align*}
|m - B r a(r)| \geq | m - \Phi B - B \delta (r-1) | 
+ {\mathcal O}\bigl(B (r-1)^2\bigr).
\end{align*}
Using the Agmon estimates, this yields the following bound on normalized 
functions $v$ in $\text{span}\{u_1, u_2\}$.
\begin{equation}
\label{eq:qformest}
\begin{aligned}
\q_m[v] &\geq\int_1^{\infty} \Bigl( |v'(r)|^2 + 
\frac{1}{r^2}\bigl(\Delta m - B \delta (r-1) \bigr)^2 |v(r)|^2\Bigr) r\,dr
+ {\mathcal O}(B^{1/2})\\
&=\widetilde{\q}_m[v] + {\mathcal O}(B^{1/2}),
\end{aligned}
\end{equation}
with
\begin{equation}
\label{eq:tildeq}
\widetilde{\q}_m[v] = \int_1^{\infty} 
 |v'(r)|^2 + \bigl(\Delta m - B \delta (r-1) \bigr)^2 |v(r)|^2\,dr.
\end{equation}
By translation and scaling $\widetilde{\q}_m$ is unitarily equivalent to 
(the quadratic form of) a de Gennes operator (see Appendix~\ref{sec:dG}) and 
therefore has spectrum given 
by
\[
B \delta 
\bigl\{ \eig{j}{\dG}(\Delta m/(\delta B)^{1/2})\bigr\}_{j=1}^{+\infty}
\]
Only the first of these $\eig{j}{\dG}$---counted with multiplicity---is below 
$1$ (for some values of $\Delta m/(\delta B)^{1/2}$), so we reach a 
contradiction if we have a subspace of dimension $2$ on which the quadratic 
form is small.
\end{proof}

\begin{lemma}\label{lem:FurtherRestrictionOnM}
Let $M>0$.
Suppose $\Ham_m(B)$ admits an eigenvalue below $\Theta_0 \delta B + M B^{1/2}$. 
Then there exists a constant $C>0$ such that
\begin{equation}
\label{eq:mcond}
\bigl|m - \bigl(\Phi B + \xi_0 (\delta B)^{1/2}\bigr)\bigr|
\leq CB^{1/4}.
\end{equation}
\end{lemma}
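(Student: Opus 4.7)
The plan is to sharpen the reduction to a scaled de Gennes operator that was already carried out inside the proof of Lemma~\ref{lem:uniqueeigenvalue}, but to extract from it quantitative information on where $m$ must sit rather than merely a multiplicity statement. Since $\Theta_0 < 1$, the threshold $\Theta_0 \delta B + M B^{1/2}$ lies below $t\delta B$ for any fixed $t \in (\Theta_0,1)$ and all $B$ large. So I would begin by invoking Lemma~\ref{lem:RestrictionOnM} to restrict attention to $m = \Phi B + \Delta m$ with $|\Delta m| \leq C_0 B^{1/2}$; in particular the rescaled parameter $\xi := \Delta m/(\delta B)^{1/2}$ will lie in a fixed bounded interval.

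Next, I would reuse the lower bound \eqref{eq:qformest}, namely $\q_m[u] \geq \widetilde{\q}_m[u] - C_1 B^{1/2}$, applied to a normalized eigenfunction $u$ associated with an eigenvalue $\lambda \leq \Theta_0 \delta B + M B^{1/2}$. Under the change of variable $t = (\delta B)^{1/2}(r-1)$ and the unitary rescaling of $L^2$, the form $\widetilde{\q}_m$ in \eqref{eq:tildeq} becomes exactly $\delta B$ times the quadratic form of the de Gennes operator on the half-line $(0,\infty)$ with parameter $\xi$, whose lowest eigenvalue is $\eig{1}{\dG}(\xi)$. Consequently
\[
\lambda \geq \delta B\, \eig{1}{\dG}(\xi) - C_1 B^{1/2}.
\]

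Combining this with the assumed upper bound $\lambda \leq \Theta_0 \delta B + M B^{1/2}$ yields
\[
\eig{1}{\dG}(\xi) - \Theta_0 \leq \frac{M+C_1}{\delta}\, B^{-1/2}.
\]
Since $\xi_0$ is the unique, non-degenerate minimum of $\eig{1}{\dG}$ with value $\Theta_0$, there is a uniform constant $c>0$ such that $\eig{1}{\dG}(\xi) - \Theta_0 \geq c(\xi - \xi_0)^2$ on the bounded range allowed for $\xi$ by the first step. This forces $|\xi-\xi_0| \leq C B^{-1/4}$, which unravels to $|\Delta m - \xi_0(\delta B)^{1/2}| \leq C' B^{1/4}$, i.e.\ the desired \eqref{eq:mcond}.

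The only delicate point, already handled internally in the proof of Lemma~\ref{lem:uniqueeigenvalue}, is to verify that the errors produced in passing from $\q_m$ to $\widetilde{\q}_m$ are genuinely $O(B^{1/2})$: one must expand $ra(r) = \Phi + \delta(r-1) + O((r-1)^2)$ via \eqref{eq:rar_expansion}, replace the measure $r\,dr$ by $dr$, and discard the $r^{-2}$ prefactor in \eqref{eq:Cancellation}. All of these corrections are controlled by Proposition~\ref{prop:Agmon_Outside}, which localizes $u$ within a layer of thickness $O(B^{-1/2})$ around $r=1$; so each correction contributes at most $O(B^{1/2})$ to the quadratic form. Once that reduction is granted, the remainder of the argument is purely about the quadratic minimum of the de Gennes dispersion $\eig{1}{\dG}(\cdot)$ at $\xi_0$.
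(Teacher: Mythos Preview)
Your proposal is correct and follows essentially the same route as the paper: invoke Lemma~\ref{lem:RestrictionOnM} to bound $\xi=(m-\Phi B)/(\delta B)^{1/2}$, use the lower bound \eqref{eq:qformest} together with the Agmon localization to reduce $\q_m$ to the de~Gennes form $\widetilde{\q}_m$ up to an $O(B^{1/2})$ error, and then exploit the non-degenerate quadratic minimum of $\eigone{\dG(\cdot)}$ at $\xi_0$ to force $|\xi-\xi_0|\leq CB^{-1/4}$. The paper's proof is identical in structure, only slightly more terse in its justification of the $O(B^{1/2})$ error.
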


\begin{proof}
By Lemma~\ref{lem:RestrictionOnM}, $|m - \Phi B| = {\mathcal O}(B^{1/2})$.
Assuming that $u$ is the eigenfunction corresponding to the unique 
(by Lemma~\ref{lem:uniqueeigenvalue}) eigenvalue
$\lambda$ below $\Theta_0 \delta B + M B^{1/2}$ we can use the estimate
in~\eqref{eq:qformest} to find that
\[
\q_m[u]\geq \widetilde{\q}_m[u] + \mathcal{O}(B^{1/2}),
\]
with $\widetilde{\q}_m$ as in~\eqref{eq:tildeq}. Implementing the change of
variable $r=1+(\delta B)^{-1/2}\rho$, we get (here we write 
$v(\rho)=(\delta B)^{-1/4}u(1+(\delta B)^{-1/2}\rho)$)
\[
\widetilde{\q}_m[v]=\delta B\int_0^{+\infty} 
|v'(\rho)|^2+
\biggl(\rho-\xi_0+\xi_0-\frac{m-\Phi B}{(\delta B)^{1/2}}\biggr)^2 
|v|^2\,d\rho.
\]
We recognize this as the quadratic form for the de~Gennes operator (see
Appendix~\ref{sec:dG}). By noticing that the first eigenvalue 
$\eigone{\dG}(\xi)$ has a quadratic minimum $\Theta_0$ at $\xi_0$ (and using 
the bound on $(m-\Phi B)/(\delta B)^{1/2}$) we find
that there exists a positive constant $C_0$ such that
\[
\widetilde{\q}_m[v]\geq \biggl[\Theta_0\delta B 
+ C_0\delta B\biggl(\xi_0-\frac{m-\Phi B}{(\delta B)^{1/2}}\biggr)^2\,\biggr] \| v \|^2.
\]
The second term above is bounded by some constant times $B^{1/2}$ according to
the assumption. This in turn gives the existence of a positive constant $C$ 
such that~\eqref{eq:mcond} holds.
\end{proof}

In the remainder of this section we will always restrict our attention to $m$'s 
satisfying the conclusion of Lemma~\ref{lem:FurtherRestrictionOnM}. 

The strategy of the rest of the proof is as follows. We will construct an 
explicit trial state for the operator $\h=\frac1B\Ham_m(B)$ 
(here we suppress the dependence on $m$ and $B$ for the simplicity of notation). 
This trial 
state will be constructed as the first terms of a formal expansion. By taking 
only finitely many terms (for our purposes $3$ terms suffice) and performing a 
localization one gets a well-defined trial state. 
In terms of the objects calculated below our explicit trial state will be as 
follows. Let 
\begin{align}
v(\rho) = v_0 + B^{-1/2} v_1 + B^{-1} v_2,\quad
\lambda = \lambda_0 + B^{-1/2} \lambda_1 + B^{-1} \lambda_2.
\end{align}
Let furthermore, $\chi\in C_0^{\infty}({\mathbb R})$, with $\chi(0) =1$, and 
define (with suitable $\epsilon$, say $\epsilon = (100)^{-1}$)
\begin{align}\label{eq:explicitQuasimode}
\tilde v(r) = (\delta B)^{1/4} \chi(B^{1/2-\epsilon} (r-1)) v( (\delta B)^{1/2} (r-1)).
\end{align}
Then $\| \tilde v \|_{L^2} = 1 + {\mathcal O}(B^{-1/2})$ and
\begin{align}
\| (\h - \lambda) \tilde v \| = {\mathcal O}(B^{-3/2}).
\end{align}
By self-adjointness of $\h$ we get that 
$\dist(\lambda, \sigma(\h)) = {\mathcal O}(B^{-3/2})$. Since we by
Lemma~\ref{lem:uniqueeigenvalue} know that $\h$ has at most
one eigenvalue near $\lambda_0=\delta\Theta_0$, we can conclude 
that $\lambda$ gives 
the first terms of the asymptotic expansion of that lowest eigenvalue of~$\h$.

We proceed to the termwise construction of the trial state.
Since (by Proposition~\ref{prop:Agmon_Outside}) we have localization around 
$r=1$, we implement unitarily the change of variables 
\[
\rho=(\delta B)^{1/2}(r-1),\quad r = 1 +(\delta B)^{-1/2}\rho.
\]
Here, the $\delta$ is included for convenience. Then

\begin{equation}
\label{eq:apot}
B r a(r) = \Phi B + (\delta B)^{1/2} \rho + \frac{1}{2} \rho^2 
+ \frac{k}{6 \delta^{3/2}} B^{-1/2} \rho^3 + {\mathcal O}(B^{-1}).
\end{equation}
Here the estimate on the remainder should be understood in the following sense: We will only act with our operator on the function $\tilde v$ from \eqref{eq:explicitQuasimode} which is localized near $r=1$ on the scale $B^{-1/2}$. So we may consider $\rho$ as a quantity of order $1$.

By Lemma~\ref{lem:FurtherRestrictionOnM} the constant term $m - \Phi B$ 
vanishes to leading order.
For reasons of expositions we will write
\[
m = \Phi B + \mu_1B^{1/2}+\mu_2,
\]
and not insert the choice $\mu_1 = \xi_0 \delta^{1/2}$ until later. Recall 
that $\mu_2 B^{-1/4}$ is bounded.
Integrating by parts, we find (with 
$v(\rho)=(\delta B)^{-1/4}u(1+(\delta B)^{-1/2}\rho)$)
\begin{equation}
\label{eq:diffexp}
\begin{multlined}
\frac{1}{B}\int_1^{+\infty}\Bigl|\frac{d u}{dr} \Bigr|^2\,r\,dr\\
=\delta \int_0^{+\infty} \overline{v} 
\Bigl( - \frac{d^2 v}{d\rho^2} - (\delta B)^{-1/2}  
( 1 + (\delta B)^{-1/2} \rho)^{-1} \frac{dv}{d\rho} \Bigr) 
(1+(\delta B)^{-1/2}\rho)\,d\rho.
\end{multlined}
\end{equation}
We expand our operator $\h$ as
\[
\h = \h_0+B^{-1/2}\h_1+B^{-1}\h_2+\ldots
\]
and obtain
\begin{equation}\label{eq:expanded}
\begin{aligned}
\h_0 &= \delta\Bigl(-\frac{d^2}{d\rho^2}+(\rho-\mu_1/\delta^{1/2})^2\Bigr),\\
\h_1 &= -\delta^{1/2}\frac{d}{d\rho}
-2\mu_2\delta^{1/2}(\rho-\mu_1/\delta^{1/2})
-\frac{2\mu_1^2}{\delta^{1/2}}\rho+3\mu_1\rho^2-\delta^{1/2}\rho^3,\\
\h_2 &= 
\rho\frac{d}{d\rho}
+\mu_2^2
-\frac{4\mu_1 \mu_2}{\delta^{1/2}}\rho 
+3 \mu_2 \rho ^2
+\frac{3 \mu_1^2}{\delta } \rho ^2
-\frac{k \mu_1}{3 \delta ^{3/2}} \rho^3
-\frac{4 \mu_1 }{\delta^{1/2}}\rho ^3
+\frac{k}{3 \delta }\rho ^4
+\frac{5}{4}\rho^4.
\end{aligned}
\end{equation}
We make the Ansatz 
\[
v = \sum_{j=0}^{+\infty} v_j B^{-j/2},\quad 
\lambda = \sum_{j=0}^{+\infty} \lambda_j B^{-j/2}.
\]
Equating order by order in the relation $(\h-\lambda)v=0$ gives:

\paragraph{{\bf Order $B^0$:}} To leading order we find
\[
\h_0 v_0=\lambda_0 v_0,
\]
which is the eigenvalue problem for the de Gennes operator discussed in 
Appendix~\ref{sec:dG}. The optimal eigenvalue $\lambda_0=\delta\Theta_0$ is 
attained for $v_0=\phi_{\xi_0}$ and $\mu_1=\delta^{1/2}\xi_0$.

\paragraph{{\bf Order $B^{-1/2}$:}} Here we get 
\[
(\h_0-\lambda_0)v_1 = (\lambda_1-\h_1)v_0.
\]
By taking scalar product (with measure $d\rho$), we find
\[
0=\langle v_0,(\h_0-\lambda_0)v_1\rangle 
= \lambda_1-\langle v_0,\h_1v_0\rangle.
\]
Via the formulas~\eqref{eq:momentone}--\eqref{eq:dphi} we find 
\begin{equation}
\label{eq:lambdaone}
\begin{aligned}
\lambda_1 &= \langle \phi_{\xi_0},\h_1\phi_{\xi_0}\rangle\\
& = \Bigl\langle \phi_{\xi_0},
\bigl(-\delta^{1/2}\frac{d}{d\rho}
-2\mu_2\delta^{1/2}(\rho-\mu_1/\delta^{1/2})
-\frac{2\mu_1^2}{\delta^{1/2}}\rho+3\mu_1\rho^2
-\delta^{1/2}\rho^3\bigr)\phi_{\xi_0}\Bigr\rangle\\
& = -\delta^{1/2}\langle \phi_{\xi_0},\phi_{\xi_0}'\rangle
-2\mu_2\delta^{1/2}\langle \phi_{\xi_0},(\rho-\xi_0)\phi_{\xi_0}\rangle
-2\xi_0^2\delta^{1/2}\langle \phi_{\xi_0},\rho\phi_{\xi_0}\rangle\\
&\quad+3\xi_0\delta^{1/2}\langle \phi_{\xi_0},\rho^2\phi_{\xi_0}\rangle
-\delta^{1/2}\langle\phi_{\xi_0},\rho^3\phi_{\xi_0}\rangle\\
& = \frac{1}{3}\phi_{\xi_0}(0)^2\delta^{1/2}.
\end{aligned}
\end{equation}

In particular $\lambda_1$ is independent of $\mu_2$. Moreover, since we can 
choose $v_1\perp v_0$, we can let $v_1$ be the regularized resolvent 
$(\h_0-\lambda_0)^{-1}_{\text{reg}}$ of $-\h_1v_0$.
This regularized resolvent is defined as the inverse of the operator 
$(\h_0-\lambda_0)$ restricted to the space $\{v_0\}^{\perp}$.
So we have,
\begin{align}\label{eq:Solv1}
v_1 = -(\h_0-\lambda_0)^{-1}_{\text{reg}}\bigl[\h_1v_0\bigr].
\end{align}

\paragraph{{\bf Order $B^{-1}$:}} We get
\begin{align}\label{eq:degree2}
(\h_0-\lambda_0)v_2 = (\lambda_2-\h_2)v_0+(\lambda_1-\h_1)v_1.
\end{align}
Taking scalar product with $v_0$ again gives
\[
\lambda_2 = \langle v_0,\h_2v_0\rangle 
+ \langle v_0,(\h_1-\lambda_1) v_1\rangle.
\]
We will not calculate this expression in all detail. We are only interested in the 
dependence on $\mu_2$. An inspection gives that it will be a polynomial of
degree two. We will calculate the coefficient in front of $\mu_2^2$ to see that
it is positive so that $\lambda_2$ has a unique minimum with respect to $\mu_2$.

The term $\langle v_0,\h_2v_0\rangle$ is easily calculated since $\h_2$ 
contains one $\mu_2^2$ only.

For the term $\langle v_0,(\h_1-\lambda_1) v_1\rangle$ we find one $\mu_2$ in
$\h_1$ and therefore also one in $v_1$. The coefficient in front of $\mu_2$ in
that term becomes
\begin{multline*}
\Bigl\langle v_0,-2\delta^{1/2}(\rho-\xi_0) 
\bigl(\h_0-\lambda_0\bigr)^{-1}_{\text{reg}}
\bigl[2\delta^{1/2}(\rho-\xi_0)v_0\bigr]\Bigr\rangle\\
=
-4\Bigl\langle (\rho-\xi_0)\phi_{\xi_0},
\bigl(\dG(\xi_0)-\Theta_0\bigr)^{-1}_{\text{reg}}
\bigl[(\rho-\xi_0)\phi_{\xi_0}\bigr]\Bigr\rangle.
\end{multline*}
So, the coefficient in front of $\mu_2^2$ in $\lambda_2$ will be
(see~\eqref{eq:seconddG})
\[
1-4\Bigl\langle (\rho-\xi_0)\phi_{\xi_0},
\bigl(\dG(\xi)-\Theta_0\bigr)^{-1}_{\text{reg}}
\bigl[(\rho-\xi_0)\phi_{\xi_0}\bigr]\Bigr\rangle 
= \xi_0\phi_{\xi_0}(0)^2>0.
\]
This means that we can write
\begin{align}\label{eq:lambda_2}
\lambda_2 = 
\xi_0\phi_{\xi_0}(0)^2
\Bigl(\bigl(\mu_2-C_0^{\text{ext}}\bigr)^2+C_1^{\text{ext}}\Bigr),
\end{align}
where $C_0^{\text{ext}}$ and $C_1^{\text{ext}}$ depend only on $k$, $\delta$,
$\xi_0$ and $\phi_{\xi_0}(0)$ (but not on $\Phi$).

We summarize these findings in a Lemma.

\begin{lemma}\label{lem:expansion}
Suppose 
\[
m = \Phi B+ \xi_0\,(\delta B)^{1/2}+\mu_2,
\]
with $\mu_2 = {\mathcal O}(B^{1/4})$. Then
\begin{align}
\eigone{\Ham_m(B)} &= \Theta_0\delta B 
+ \frac{1}{3}\phi_{\xi_0}(0)^2 (\delta B)^{1/2} 
+ \xi_0\phi_{\xi_0}(0)^2
\bigl((\mu_2 - C_0^{\rm ext})^2+C_1^{\text{ext}}\bigr) \nonumber \\
&\quad+\Oh((1+\mu_2^3)B^{-1/2}).
\end{align}
\end{lemma}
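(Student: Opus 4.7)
The plan is to upgrade the formal three-term expansion already computed in \eqref{eq:expanded}--\eqref{eq:lambda_2} into a genuine quasi-mode, then use the spectral theorem together with the uniqueness statement of Lemma~\ref{lem:uniqueeigenvalue} to identify the quasi-mode energy with the lowest eigenvalue $\eigone{\Ham_m(B)}$. Concretely, with $\lambda_0 = \Theta_0 \delta$, $\mu_1 = \xi_0 \delta^{1/2}$, $v_0 = \phi_{\xi_0}$, $\lambda_1$ given by \eqref{eq:lambdaone}, and $v_1$ given by \eqref{eq:Solv1}, I would set $\lambda_2$ as in the calculation leading to \eqref{eq:lambda_2} and take $v_2 = (\h_0 - \lambda_0)^{-1}_{\text{reg}}\bigl[(\lambda_2 - \h_2) v_0 + (\lambda_1 - \h_1) v_1\bigr]$. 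Because the Neumann condition at $\rho = 0$ is built into the domain of $\h_0$, it is preserved by the regularized resolvent, so $v_1$ and $v_2$ inherit $v_j'(0) = 0$; exponential decay of $\phi_{\xi_0}$ together with the polynomial nature of $\h_1, \h_2$ in $\rho$ then propagates to give exponential decay of $v_1, v_2$ as well.

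Setting $v = v_0 + B^{-1/2} v_1 + B^{-1} v_2$ and $\lambda = \lambda_0 + B^{-1/2} \lambda_1 + B^{-1} \lambda_2$, I would verify that the quasi-mode $\tilde v$ of \eqref{eq:explicitQuasimode} satisfies $\|\tilde v\|_{L^2((1,\infty), r\,dr)} = 1 + \mathcal{O}(B^{-\infty})$ and
\[
\bigl\| (\h - \lambda) \tilde v \bigr\|_{L^2} = \mathcal{O}\bigl((1 + |\mu_2|^3)\, B^{-3/2}\bigr).
\]
The normalization follows from $\|\phi_{\xi_0}\| = 1$ and the exponential smallness of the cut-off tail, since $1 - \chi(B^{1/2-\epsilon}(r-1))$ is supported where $v$ is exponentially small in $B^{\epsilon}$. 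The residual estimate is the technical core: by construction, the contributions at orders $B^0$, $B^{-1/2}$, $B^{-1}$ cancel identically, so the remainder comes only from (a) the $\mathcal{O}(B^{-3/2})$ tails in the Taylor expansions \eqref{eq:rar_expansion}, \eqref{eq:apot} and in the metric factor $(1 + (\delta B)^{-1/2} \rho)^{\pm 1}$ of \eqref{eq:diffexp}, which produce $\mathcal{O}(B^{-3/2})$ once integrated against the exponentially localized $v_j$'s, and (b) the commutator $[\h, \chi(B^{1/2-\epsilon}(r-1))]$, which is exponentially small in $B^{\epsilon}$. The polynomial factor $1 + |\mu_2|^3$ appears because $\h_1, \h_2$ are polynomials of degree at most $2$ in $\mu_2$ and $v_1, v_2$ inherit this polynomial dependence.

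With these two estimates the spectral theorem for the self-adjoint operator $\h = B^{-1} \Ham_m(B)$ yields
\[
\dist\bigl(\lambda, \spec(\h)\bigr) \leq \frac{\| (\h - \lambda) \tilde v \|}{\| \tilde v \|} = \mathcal{O}\bigl((1+|\mu_2|^3)\, B^{-3/2}\bigr),
\]
so $\Ham_m(B)$ admits an eigenvalue within $\mathcal{O}((1+|\mu_2|^3)\, B^{-1/2})$ of $B \lambda$. To identify this eigenvalue with the ground state energy, I would pick $t \in (\Theta_0, 1)$ and observe that $B\lambda = \Theta_0 \delta B + \mathcal{O}(B^{1/2}) < t \delta B$ for large $B$; since Lemma~\ref{lem:uniqueeigenvalue} forbids a second eigenvalue below $t \delta B$, the eigenvalue produced by the quasi-mode must coincide with $\eigone{\Ham_m(B)}$. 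Inserting the explicit form \eqref{eq:lambda_2} of $\lambda_2$ then gives the asymptotic formula in the lemma.

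The main obstacle is the $\mu_2$-bookkeeping in the residual estimate: since Lemma~\ref{lem:FurtherRestrictionOnM} only gives $|\mu_2| = \mathcal{O}(B^{1/4})$, one must check that the $\mu_2$-powers generated when $\h_1, \h_2$ act on the $\mu_2$-dependent parts of $v_1, v_2$ combine at most as $(1+|\mu_2|^3) B^{-3/2}$. This is routine but must be done carefully, since the resulting error $(1 + |\mu_2|^3)\, B^{-1/2} = \mathcal{O}(B^{1/4})$ is precisely what remains below the $\mu_2^2 = \mathcal{O}(B^{1/2})$ scale of the $\lambda_2$ term and thereby preserves the meaningful oscillatory content of the expansion. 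Everything else — preservation of the Neumann condition at each order, exponential decay of the correctors $v_j$, and the cut-off commutator bound — is standard.
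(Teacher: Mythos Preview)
Your proposal is correct and follows essentially the same route as the paper: build the three-term quasi-mode $v = v_0 + B^{-1/2}v_1 + B^{-1}v_2$, estimate $\|(\h-\lambda)\tilde v\| = \mathcal{O}((1+|\mu_2|^3)B^{-3/2})$ by splitting into the truncation error of the operator expansion and the uncancelled cross-terms $(\h_1-\lambda_1)v_2$, $(\h_2-\lambda_2)v_1$, $(\h_2-\lambda_2)v_2$, and then invoke Lemma~\ref{lem:uniqueeigenvalue} to identify the resulting spectral point with the ground state. One small slip: the normalization is not $1+\mathcal{O}(B^{-\infty})$ but only $1+\mathcal{O}((1+\mu_2^2)B^{-1})$, since $v_1$ and $v_2$ carry $\mu_2$-dependence and are merely orthogonal to $v_0$ rather than absent---this is still bounded away from zero under $|\mu_2|=\mathcal{O}(B^{1/4})$ and does not affect the argument.
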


\begin{proof}
We have to control the asymptotic expansion in $\mu_2$ subject to the bound 
$|\mu_2| \leq C B^{1/4}$. Define 
\begin{align}
\lambda^{\rm app} =  \lambda_0  + \lambda_1 B^{-1/2} + \lambda_2 B^{-1},
\end{align}
with $\lambda_0, \lambda_1$ being the constants from above and $\lambda_2$ 
being the quadratic function of $\mu_2$ from \eqref{eq:lambda_2}.
We also define an approximate eigenfunction by 
\begin{align}
v = v_0 + B^{-1/2} v_1 + B^{-1} v_2,
\end{align}
with $v_0 = \varphi_{\xi_0}$, $v_1$ given by \eqref{eq:Solv1}
and $v_2$ being given by solving \eqref{eq:degree2}, i.e.
\begin{align}
v_2 = (\h_0-\lambda_0)^{-1}_{\text{reg}}\bigl[ (\lambda_2-\h_2)v_0
+(\lambda_1-\h_1)v_1 \bigr].
\end{align}
Notice from the explicit form of the operators that $v_1$ depends linearly on 
$\mu_2$ and $v_2$ depends quadratically, so $v$ is normalized to leading order.
Also, by the mapping properties of $(\h_0-\lambda_0)^{-1}_{\text{reg}}$ each 
$v_i$ is a smooth, rapidly decreasing function (see 
Lemma~3.2.9 in~\cite{fohebook}).

We can now estimate as follows
\begin{align}
\| ( \h - \lambda^{\rm app} )v \| &\leq \| ( \h_0 + B^{-1/2} \h_1 + 
B^{-1}\h_2 - \lambda^{\rm app} )v \| \nonumber \\
&\quad+ \| (\h - [\h_0 + B^{-1/2} \h_1 + B^{-1}\h_2]) v \|.
\end{align}
By the decay properties of $v$, the last term is bounded by 
$C (1 + \mu_2^2) B^{-3/2}$.
Our choice of $v$ gives that the first term is equal to
\begin{align}
\| B^{-3/2}[(\h_1-\lambda_1) v_2 +(\h_2-\lambda_2) v_1 ]
+ B^{-2} (\h_2 - \lambda_2) v_2 \|, 
\end{align}
which is easily seen to be bounded by ${\mathcal O}(B^{-3/2}(1+ |\mu_2|^3))$.
\end{proof}

\begin{proof}[Proof of Theorem~\ref{thm:extasymptot}]
Using Lemma~\ref{lem:FurtherRestrictionOnM}, Theorem~\ref{thm:extasymptot} 
follows from Lemma~\ref{lem:expansion} by 
the following argument. Notice that the positive quadratic term in 
$(\mu_2 - C_0^{\rm ext})$ dominates the error term $\mu_2^3 B^{-1/2}$ unless 
$\mu_2$ is bounded in which case the dependence on $\mu_2$ in the error term 
disappears. This finishes the proof of Theorem~\ref{thm:extasymptot}.
\end{proof}

\section{The case of the disc}
\label{sec:disc}

In this section we will indicate a similar calculation of the ground state 
eigenvalue in the case of the unit disc, thereby proving 
Theorem~\ref{thm:intasymptot}, i.e. we work on $\Omega=\{x\in\R^2~:~|x|<1\}$ and for a magnetic field satisfying Assumption~\ref{ass:Magnetic}.

We mainly give the results of the calculations referring to the exterior case 
for details. We will 
have exponential localization estimate like the one of 
Proposition~\ref{prop:Agmon_Outside} (with domain of integration being 
$\{|x|< 1\}$, of course). Therefore, also the rough `localization' of the 
relevant angular momenta---Lemma~\ref{lem:RestrictionOnM}---will hold in this 
case as well. So we can proceed to make a change of variable to the region near 
(on the scale $(\delta B)^{-1/2}$ as before) the boundary.

The leading order terms in the expansion of the operator become very similar 
to the case of the exterior of the disc:

\begin{equation*}
\begin{aligned}
\h_0 &= \delta\Bigl(-\frac{d^2}{d\rho^2}
+(\rho+\mu_1/\delta^{1/2})^2\Bigr),\\
\h_1 &= \delta^{1/2}\frac{d}{d\rho}
+2\mu_2\delta^{1/2}(\rho+\mu_1/\delta^{1/2})
+\frac{2\mu_1^2}{\delta^{1/2}}\rho+3\mu_1\rho^2+\delta^{1/2}\rho^3,\\
\h_2 &= 
\rho\frac{d}{d\rho}
+\mu_2^2
+\frac{4\mu_1 \mu_2}{\delta^{1/2}}\rho
+3 \mu_2 \rho ^2
+\frac{3 \mu_1^2}{\delta } \rho ^2
+\frac{k \mu_1}{3 \delta ^{3/2}}\rho^3
+\frac{4 \mu_1 \rho ^3}{\delta^{1/2}}
+\frac{k}{3 \delta }\rho ^4
+\frac{5}{4}\rho^4.
\end{aligned}
\end{equation*}

The same calculations (using the same Ansatz) as in the previous section
show that (with $\mu_1=-\xi_0/\delta^{1/2}$)
\[
\lambda_0 = \Theta_0,\quad 
\lambda_1 = -\frac{1}{3}\phi_{\xi_0}(0)^2\delta^{1/2},\quad
\lambda_2 = 
\xi_0\phi_{\xi_0}(0)^2\Bigl(\bigl(\mu_2-C_0^{\text{int}}\bigr)^2
+C_1^{\text{int}}\Bigr),
\]
for some constants $C_0^{\text{int}}$ and $C_1^{\text{int}}$, depending only
on the spectral parameters and $\delta$.

Thus, Theorem~\ref{thm:intasymptot} follows from calculations/arguments 
completely analogous to the ones in Section~\ref{sec:section4} and we omit the 
details.

\section{(Non)-monotonicity in the disc and its complement}
\label{sec:nonmondiscanddiscext}
Using the results of Theorem~\ref{thm:extasymptot} and~\ref{thm:intasymptot} 
it is now easy to prove Theorem~\ref{thm:main}.

\begin{proof}[Proof of Theorem~\ref{thm:main}]
We only consider the case of the disc, the complement of the disc being 
similar (using Theorem~\ref{thm:extasymptot} instead of 
Theorem~\ref{thm:intasymptot}).

Assume first that
\begin{align}
\Phi > \frac{\Theta_0}{\xi_0 \phi_{\xi_0}(0)^2} \delta.
\end{align}
Denote by $f$ the function
\[
f(B) = \Phi B-\xi_0(\delta B)^{1/2}+C_0^{\text{int}}.
\]
Notice that $B \mapsto f(B)$ is increasing for all large vaues of $B$.
Choose a sequence $\{B_1^{(n)}\}$ such that 
$f(B_1^{(n)})=n+1/2$, i.e. is a half-integer. 
Let $\varepsilon \in (0,\frac{1}{2\Phi})$.
Choose $B_2^{(n)} = B_1^{(n)}+ \varepsilon$.
Then, for all sufficiently large $n$, $n+1/2 < f(B_2^{(n)}) < n+1$.
So $\Delta_{B_1^{(n)}}^{\text{int}}=1/2$ and
\begin{align}
\lim_{n\to +\infty} \Delta_{B_2^{(n)}}^{\text{int}} 
= \lim_{n\to +\infty}  \bigl(n+1 - f(B_2^{(n)})\bigr) 
= \frac{1}{2} - \Phi \varepsilon.
\end{align}
So we get from the eigenvalue asymptotics 
that
\begin{align}
\eigone{\Ham(B_2^{(n)})} - \eigone{\Ham(B_1^{(n)})} 
&= \Theta_0 \delta \bigl(B_2^{(n)}- B_1^{(n)}\bigr)
-\frac{1}{3}\phi_{\xi_0}(0)^2 \delta^{1/2} 
\bigl[(B_2^{(n)})^{1/2}- (B_1^{(n)})^{1/2}\bigr] \nonumber \\
&\quad+ \xi_0\phi_{\xi_0}(0)^2[(1/2 - \Phi \varepsilon)^2-1/4] + o(1) \nonumber \\
&= \Theta_0 \delta \varepsilon - \xi_0\phi_{\xi_0}(0)^2[\Phi \varepsilon - \Phi^2 \varepsilon^2] + o(1),
\end{align}
which is negative for small $\varepsilon$ (and for all sufficiently 
large $n$) since $\Phi > \frac{ \Theta_0}{\xi_0 \phi_{\xi_0}(0)^2}\delta$ 
by assumption.

Suppose now that
\begin{align}\label{eq:LowFluxGivesMonotone}
\Phi < \frac{\Theta_0}{\xi_0 \phi_{\xi_0}(0)^2} \delta.
\end{align}
We restrict attention to the interval near infinity on which $f(B)$ is increasing. Here we can calculate the right-hand derivative
\begin{align}
\frac{d}{dB}_{+} ( \Delta_{B}^{\text{int}})^2 = \begin{cases} 2 \Delta_B^{\text{int}} f'(B), & \text{if }f(B) \in {\mathbb Z} + [0,1/2),\\
- 2\Delta_B^{\text{int}} f'(B),& \text{if }f(B) \in {\mathbb Z} + [1/2,1).
\end{cases}
\end{align}
So we see that for any $\eta>0$ there exists $B_0>0$ such that for all $\varepsilon>0$ and all $B > B_0$,
\begin{align}\label{eq:RHD_Delta}
( \Delta_{B+\varepsilon}^{\text{int}})^2- ( \Delta_{B}^{\text{int}})^2 
\geq
-2\int_B^{B+\varepsilon}\Delta_{b}^{\text{int}} f'(b)\, db
\geq -(\Phi+\eta) \varepsilon.
\end{align}
We aim to prove monotonicity of $\eigone{\Ham(B)}$, so it suffices to prove 
a positive lower bound on its right hand derivative 
$\frac{d}{dB}_{+}\eigone{\Ham(B)}$, 
which exists by perturbation theory.
Perturbation theory yields, for any $\epsilon>0$,
\begin{align}
\frac{d}{dB}_{+}\eigone{\Ham(B)} 
&= 2 \Re \langle \psi_B, {\bf A} \cdot (-i\nabla + B {\bf A}) \psi \rangle\\
&\geq \frac{\lambda_1(B+\varepsilon) - \lambda_1(B)}{\varepsilon} 
- \varepsilon \int_{\{|x|<1\}} {\bf A}^2 |\psi|^2 \,dx.
\end{align}
Here we completed the square and used the variational characterization of the eigenvalue in order to get the inequality.

Since $\int_{\{|x|<1\}} {\bf A}^2 |\psi|^2 \,dx \leq K$, for some constant 
$K$ independent of $B$, we can estimate, using the eigenvalue asymptotics 
and \eqref{eq:RHD_Delta}
\begin{align} 
\liminf_{B\rightarrow +\infty} \frac{d}{dB}_{+}\eigone{\Ham(B)} 
\geq \Theta_0 \delta - \xi_0\phi_{\xi_0}(0)^2 (\Phi+\eta)
- \varepsilon K.
\end{align}
Since $\varepsilon, \eta$ were arbitrary, we get that
\begin{align} 
\liminf_{B\rightarrow +\infty} \frac{d}{dB}_{+}\eigone{\Ham(B)} 
\geq \Theta_0 \delta - \xi_0\phi_{\xi_0}(0)^2 \Phi.
\end{align}
In particular, $\eigone{\Ham(B)}$ is monotone increasing for large value 
of $B$ if~\eqref{eq:LowFluxGivesMonotone} is satisfied.
\end{proof}

\section{The case of the whole plane with $\delta>0$}
\label{sec:planeg0}

\subsection{Introduction}
In this section we will consider the case $\Omega=\R^2$ and a magnetic field $\beta$ satisfying Assumption~\ref{ass:Magnetic} with $\delta>0$.
We aim to prove Theorem~\ref{thm:wholeplane} for $\delta>0$. This, however,
follows directly once the asymptotic expansion in Theorem~\ref{thm:R2deltapos} 
is obtained, since then it follows that (see~\cite[Section~2.3]{fohebook})
\[
\lim_{B\to+\infty}\frac{d}{dB} \eigone{\Ham(B)} = \delta.
\]
The proof of Theorem~\ref{thm:R2deltapos} follows the same idea as the proof
of Theorem~\ref{thm:extasymptot}. We use a localization of the ground
state to restrict the situation to certain values of the angular momentum. Then
we show that if we find a trial state with low enough energy, it must be related
to the ground state energy. Finally we expand our operator formally and 
construct a trial state that has the correct energy.

\subsection{Agmon estimate for $\delta\geq 0$}
\label{sec:agmonge0}
We start with a localization estimate valid for $\delta \geq 0$. For 
$\delta=0$ it gives the right length scale of the localization. 

\begin{prop}\label{prop:firstagmon}
Suppose $\beta$ satisfies Assumption~\ref{ass:Magnetic} with $\delta \geq 0$.
Let $\psi$ be an eigenfunction of $\Ham(B)$ corresponding to an eigenvalue 
$\lambda \leq \delta B+\omega B^{1/2}$ for some $\omega>0$. Then there exist 
positive constants $C$ and $B_0$ such that
\begin{equation}
\label{eq:agmon}
\int_{\R^2}\exp\bigl(2B^{1/4}\bigl|1-|x|\bigr|\bigr)|\psi|^2\,dx
\leq C \int_{\R^2}|\psi|^2\,dx
\end{equation}
and
\begin{align}\label{eq:AgmonGradient}
\int_{\R^2}\exp\bigl(2B^{1/4}\bigl|1-|x|\bigr|\bigr)
|(-i\nabla + B {\mathbf F})\psi|^2\,dx
\leq C (\delta B + B^{1/2})\int_{\R^2}|\psi|^2\,dx
\end{align}
if $B>B_0$.
\end{prop}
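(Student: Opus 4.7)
The approach is a standard magnetic Agmon estimate, using the pointwise lower bound $\Ham(B)\geq B\beta(x)$ (Bochner bound, valid on $\R^2$ for $\beta\geq 0$) together with a radial exponential weight tuned to the scale $B^{-1/4}$ coming from the expected width of the quadratic well. For a Lipschitz real-valued weight $\phi$ with bounded gradient, the magnetic Agmon identity
\begin{equation*}
\mathrm{Re}\,\langle e^{2\phi}\psi,\Ham(B)\psi\rangle = \|(-i\nabla+B\mathbf{F})(e^\phi\psi)\|^2 - \||\nabla\phi|\,e^\phi\psi\|^2
\end{equation*}
combined with the Bochner bound yields, for $\psi$ an eigenfunction with eigenvalue $\lambda\leq \delta B + \omega B^{1/2}$,
\begin{equation*}
\int_{\R^2}\bigl(B\beta(x) - \lambda - |\nabla\phi|^2\bigr)\,e^{2\phi}|\psi|^2\,dx \leq 0.
\end{equation*}
As usual this is made rigorous by first applying it to $\chi_n\psi$ for a radial cutoff $\chi_n$ supported in $\{|x|\leq n\}$ and passing to the limit.

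I take $\phi(x) = B^{1/4}\sqrt{\bigl(|x|-1\bigr)^2 + B^{-1/2}}$, a smooth approximation of $B^{1/4}\bigl||x|-1\bigr|$ with $|\nabla\phi|^2\leq B^{1/2}$. Fix a large constant $C_0>0$ and put $W=\{x: \bigl||x|-1\bigr|\leq C_0 B^{-1/4}\}$. Using Assumption~\ref{ass:Magnetic}, which gives the expansion $\beta(x) = \delta + \tfrac{k}{2}(|x|-1)^2 + \Oh\bigl((|x|-1)^3\bigr)$ near the unit circle (with $k>0$ since $\Omega=\R^2$) and $\inf_{\R^2\setminus U}\beta > \delta$ outside the neighbourhood $U$, one checks that for $C_0$ sufficiently large and $B\geq B_0$,
\begin{equation*}
B\beta(x) - \lambda - |\nabla\phi|^2 \geq c\,B^{1/2},\qquad x\in \R^2\setminus W,
\end{equation*}
for some $c>0$. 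Inside $U\setminus W$ this relies on $\tfrac{Bk}{2}(|x|-1)^2 \geq \tfrac{k C_0^2}{2}B^{1/2}$, which beats $(\omega+1)B^{1/2}$ once $C_0$ is large; outside $U$ the bound $B\bigl(\inf\beta-\delta\bigr)$ of order $B$ provides ample room.

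Inserting this into the Agmon inequality and transposing the $W$-contribution to the right, I use $e^{2\phi}\leq e^{2C_0+O(1)}$ on $W$ together with $\lambda - B\beta\leq \omega B^{1/2}$ (from $\beta\geq\delta$) to get
\begin{equation*}
c\,B^{1/2}\int_{\R^2\setminus W}e^{2\phi}|\psi|^2\,dx \leq C' B^{1/2}\|\psi\|^2,
\end{equation*}
whence \eqref{eq:agmon} follows after adding the trivial bound on $W$ and noting $\phi(x)\geq B^{1/4}\bigl||x|-1\bigr|$. For \eqref{eq:AgmonGradient} I feed \eqref{eq:agmon} back into the Agmon identity to obtain $\|(-i\nabla+B\mathbf{F})(e^\phi\psi)\|^2\leq (\lambda + B^{1/2})\,C\|\psi\|^2\leq C(\delta B + B^{1/2})\|\psi\|^2$, and then conclude via the triangle inequality
\begin{equation*}
\|e^\phi(-i\nabla+B\mathbf{F})\psi\|\leq \|(-i\nabla+B\mathbf{F})(e^\phi\psi)\|+\||\nabla\phi|\,e^\phi\psi\|
\end{equation*}
together with $|\nabla\phi|^2\leq B^{1/2}$. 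The only genuinely delicate point is the calibration of $C_0$: it must simultaneously dominate the loss $\omega B^{1/2}$ from the upper bound on $\lambda$ inside the quadratic well (where positivity is controlled by $k$) and be consistent with the exterior estimate coming from $\inf_{\R^2\setminus U}\beta>\delta$. Once $C_0$ is fixed, the rest is bookkeeping.
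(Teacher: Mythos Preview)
Your proof is correct and follows essentially the same Agmon-estimate strategy as the paper: both use the pointwise Bochner bound $\Ham(B)\geq B\beta(x)$ together with an exponential weight at the length scale $B^{-1/4}$, and both derive the gradient estimate by feeding the $L^2$ bound back into the basic identity. The only cosmetic differences are in packaging---the paper uses an explicit IMS partition $\chi_1^2+\chi_2^2=1$ and makes the identity rigorous by truncating the \emph{weight} at infinity (writing $\Phi_\ell(x)=B^{1/4}\bigl|1-|x|\bigr|\chi(|x|/\ell)$ and letting $\ell\to\infty$ by monotone convergence) rather than truncating the eigenfunction, which avoids the commutator terms your cutoff $\chi_n\psi$ would introduce.
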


By the localization estimates of Proposition~\ref{prop:firstagmon}, the 
quadratic forms $\q_m$ are well approximated by harmonic oscillators, whose 
ground state eigenvalues are simple. This implies simplicity of the low-lying 
eigenvalues of $\Ham_m(B)$.

\begin{lemma}
\label{lem:uniqueeigenvalue2}
Let $\delta >0$.
Let $\omega > 0$. There exists $B_0>0$ such that if $m \in {\mathbb Z}$ and 
$B\geq B_0$, then $\Ham_m(B)$ 
admits at most one eigenvalue 
below $\delta B + \omega B^{1/2}$.
\end{lemma}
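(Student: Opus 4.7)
The plan is to mimic the strategy of Lemma~\ref{lem:uniqueeigenvalue}, with the essential difference that the effective model operator is a harmonic oscillator (centered at the interior minimum of $\tilde\beta$) rather than a de~Gennes operator. Since the relevant harmonic oscillator has a gap of order $\delta B$ between its first and second eigenvalues, while we only need to exclude a second eigenvalue at scale $\omega B^{1/2}$ above the ground state, the argument will have significant slack.

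First, by Proposition~\ref{prop:firstagmon} any eigenfunction $\psi=u_m(r)e^{-im\theta}$ of $\Ham_m(B)$ with eigenvalue $\lambda\le \delta B+\omega B^{1/2}$ is exponentially concentrated near $r=1$ on the scale $B^{-1/4}$, and an analogous bound \eqref{eq:AgmonGradient} controls the gradient. Second, I would prove a rough restriction on $m$ analogous to Lemma~\ref{lem:RestrictionOnM}: using the identity \eqref{eq:Cancellation} and the expansion \eqref{eq:rar_expansion}, the potential takes the value $(m-\Phi B)^2$ at $r=1$, and combining this with the exponential localization yields
\[
\q_m[u_m]\ge \tfrac{1}{8}(m-\Phi B)^2\bigl(1+\Oh(B^{-\infty})\bigr)-\widetilde C B,
\]
which forces $m=\Phi B+\Oh(B^{1/2})$ for any $m$ supporting an eigenvalue below $\delta B+\omega B^{1/2}$.

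Third, suppose by contradiction that for such $m$ there exist two orthogonal normalized eigenfunctions $u_1,u_2$ of $\Ham_m(B)$ with eigenvalues at most $\delta B+\omega B^{1/2}$. Write $m=\Phi B+\mu$ with $\mu=\Oh(B^{1/2})$ and perform the unitary change of variable $r=1+B^{-1/2}\rho$, $v(\rho)= B^{-1/4}u(r)$, as in Section~\ref{sec:section4}. Expanding $Bra(r)$ via \eqref{eq:rar_expansion} gives $Bra(r)=\Phi B+\delta B^{1/2}\rho+\tfrac{\delta}{2}\rho^2+\Oh(B^{-1/2}\rho^3)$. Using the Agmon estimates (both for $|\psi|^2$ and for $|(-i\nabla+B\mathbf F)\psi|^2$) to absorb the Jacobian factors $(1+B^{-1/2}\rho)^{\pm1}$ and the cubic/quartic tails, I would show that for every normalized $v$ in $\mathrm{span}\{v_1,v_2\}$,
\[
\tfrac{1}{B}\q_m[u]\ge \widetilde{\q}[v]+\Oh(B^{-1/2}),\qquad \widetilde{\q}[v]=\int_{\mathbb R}\bigl(|v'(\rho)|^2+(\delta\rho-\mu B^{-1/2})^2|v(\rho)|^2\bigr)d\rho,
\]
where the integration can be extended to all of $\mathbb R$ with exponentially small error.

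Finally, $\widetilde{\q}$ is the quadratic form of a shifted harmonic oscillator whose spectrum is $\{(2n+1)\delta\}_{n\ge0}$, independent of the shift $\mu B^{-1/2}$. In particular its second eigenvalue equals $3\delta$, with a gap of $2\delta$ above the ground state. By min--max there exists a unit vector in the two-dimensional (rescaled) span of $v_1,v_2$ on which $\widetilde{\q}[v]\ge 3\delta$, so that $\q_m[u]\ge 3\delta B-\Oh(B^{1/2})$, contradicting $\q_m[u]\le \delta B+\omega B^{1/2}$ for $B$ large enough. The main technical obstacle is justifying the passage from $\q_m$ to $\widetilde{\q}$ uniformly on a two-dimensional subspace; the two Agmon bounds of Proposition~\ref{prop:firstagmon} are tailored precisely to control the polynomial-in-$\rho$ errors coming from the Taylor expansion of $ra(r)$ and from the polar measure.
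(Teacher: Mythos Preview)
Your approach is essentially the same as the paper's intended proof: reduce to a harmonic-oscillator comparison using the Agmon estimates of Proposition~\ref{prop:firstagmon}, exactly parallel to how Lemma~\ref{lem:uniqueeigenvalue} reduces to the de~Gennes operator. The paper in fact omits the proof with the remark that it is ``similar to that of Lemma~\ref{lem:uniqueeigenvalue}'', after observing that $\q_m$ is well approximated by a harmonic oscillator.

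Two small inaccuracies in your sketch are worth flagging. First, Proposition~\ref{prop:firstagmon} gives localization only on the scale $B^{-1/4}$, so the argument leading to your displayed inequality $\q_m[u_m]\ge\tfrac18(m-\Phi B)^2-\widetilde C B$ actually yields $-\widetilde C B^{3/2}$ on the right (since $\int (r-1)^2|u|^2\,r\,dr=\mathcal O(B^{-1/2})$, not $\mathcal O(B^{-1})$), and hence only $m=\Phi B+\mathcal O(B^{3/4})$; this is exactly Lemma~\ref{lem:LocAngMomFirst}. It is harmless here because the harmonic-oscillator spectrum is translation invariant. Second, with only $B^{-1/4}$ localization the variable $\rho=B^{1/2}(r-1)$ ranges up to order $B^{1/4}$, and the cross terms involving $\tfrac{\delta}{2}\rho^2$ in the Taylor expansion are not obviously $\mathcal O(B^{-1/2})$ as you assert. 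The clean fix is to bound the potential from below \emph{directly} rather than via a Cauchy split: writing $m-Bra(r)=-B\int_{r_0}^r\tilde\beta(s)s\,ds$ with $r_0a(r_0)=m/B$, one has $\tilde\beta(s)s\ge\delta(1-\mathcal O(B^{-1/4}))$ on the Agmon support, so $V(r)\ge(1-o(1))\,\delta^2 B^2(r-r_0)^2$ there. The harmonic oscillator with second eigenvalue $3\delta B$ then gives the contradiction with ample slack, as you correctly anticipate.
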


The proof of Lemma~\ref{lem:uniqueeigenvalue2} is similar to that of 
Lemma~\ref{lem:uniqueeigenvalue} and will be omitted.

\begin{proof}[Proof of Prop.~\ref{prop:firstagmon}]
Let $\chi(s)$ be a smooth cut-off function of the real variable $s$ satisfying
\begin{equation}
\label{eq:cutofffunction}
\chi(s)=
\begin{cases}
1, & |s|\leq 1/2,\\
0, & |s|\geq 1,
\end{cases}
\end{equation}
and such that $|\chi'(s)|\leq 3$ for all $s$, and 
$\bigl(1-\chi^2\bigr)^{1/2} \in C^1({\mathbb R})$. Next, let $M$ and $\alpha$ be
positive (to determined below) real numbers and define in $\R^2$ the
functions $\chi_1$ and $\chi_2$ via 
$\chi_1(x)=\chi\bigl(MB^{\alpha}(1-|x|)\bigr)$ and 
$\chi_1(x)^2+\chi_2(x)^2=1$. Then there exists a constant $C_1$ such that
\begin{equation}
\label{eq:nablachi}
\|\nabla \chi_j\|_{\infty} \leq C_1 M B^{\alpha},\quad j\in\{1,2\}.
\end{equation}
Next, for $\ell>0$, let $\Phi_\ell(x)=B^{\sigma}\bigl|1-|x|\bigr|\chi(|x|/\ell)$. 
Then, pointwise in $\R$, it holds that $\Phi_\ell(x)\to B^{\sigma}\bigl|1-|x|\bigr|$ 
as $\ell\to+\infty$. Moreover, $\Phi_\ell$ is differentiable almost everywhere and 
if $\ell\geq 2$ its gradient 
satisfies
\begin{equation}
\label{eq:nablaPhi}
\|\nabla\Phi_\ell\|_{\infty} \leq 4B^{\sigma}.
\end{equation}
Moreover, $\Phi_\ell$ is bounded for all $\ell>0$, so the function 
$\tpsi=\psi e^{\Phi_\ell}$ belongs to the form-domain of $\Ham(B)$.

With the IMS formula, we find that
\begin{equation}
\label{eq:estone}
\begin{aligned}
\q[\chi_1\tpsi]+\q[\chi_2\tpsi]&
\leq \bigl(2C_1M^2B^{2\alpha}+\lambda+16B^{2\sigma}\bigr)\|\tpsi\|^2\\
&\leq \bigl(2C_1M^2B^{2\alpha}
+\delta B+\omega B^{1/2}+16B^{2\sigma}\bigr)\|\tpsi\|^2.
\end{aligned}
\end{equation}
Using that the smallest Dirichlet eigenvalue is greater than the smallest value 
of the magnetic field (again, see~\cite{avhesi}), we find that
\begin{equation}
\q[\chi_1\tpsi]\geq \delta B\|\chi_1\tpsi\|^2
\end{equation}
and
\begin{equation}
\label{eq:qtwoest}
\q[\chi_2\tpsi]\geq 
\Bigl(\delta B+\frac{kB^{1-2\alpha}}{4M^2}\Bigr)\|\chi_2\tpsi\|^2.
\end{equation}
Inserting this into~\eqref{eq:estone} we find that
\begin{equation*}
\delta B\|\tpsi\|^2+\frac{kB^{1-2\alpha}}{4M^2}\|\chi_2\tpsi\|^2
\leq  \bigl(2C_1M^2B^{2\alpha}+\delta B+\omega B^{1/2}
+16B^{2\sigma}\bigr)\|\tpsi\|^2,
\end{equation*}
which can be written
\begin{multline*}
\Bigl(\frac{kB^{1-2\alpha}}{4M^2}-2C_1M^2B^{2\alpha}
-\omega B^{1/2}-16B^{2\sigma}\Bigr)\|\chi_2\tpsi\|^2 \\
\leq 
\bigl(2C_1M^2B^{2\alpha}+\omega B^{1/2}
+16B^{2\sigma}\bigr)\|\chi_1\tpsi\|^2.
\end{multline*}
Choosing $\alpha=\sigma=\frac{1}{4},$ we find that all $B$s factor out, and
hence
\begin{equation*}
\Bigl(\frac{k}{4M^2}-2C_1M^2-\omega-16\Bigr)\|\chi_2\tpsi\|^2
\leq 
\bigl(2C_1M^2+\omega+16\bigr)\|\chi_1\tpsi\|^2.
\end{equation*}
With $M$ so small that the left parenthesis above becomes positive, we find that 
there exists a constant $C_2$ such that
\begin{equation}
\label{eq:chitwochione}
\|\chi_2\tpsi\|^2\leq C_2 \|\chi_1\tpsi\|^2.
\end{equation}
On the support of $\chi_1$ it holds that $MB^{1/4}\bigl|1-|x|\bigr|\leq 1$,
and hence
\[
\exp(\Phi_\ell)=\exp\bigl(B^{1/4}\bigl|1-|x|\bigr|\chi(|x|/\ell)\bigr)
\leq \exp\bigl(\chi(|x|/\ell)/M\bigr)\leq \exp(1/M).
\]
Inserting this in~\eqref{eq:chitwochione} above yields
\[
\|\chi_2\tpsi\|^2\leq 
C_2 \exp(2/M)\|\chi_1\psi\|^2\leq  C_2 \exp(2/M)\|\psi\|^2.
\]
Using monotone convergence we find that
\[
\bigl\|\chi_2\exp\bigl(B^{1/4}\bigl|1-|x|\bigr|\bigr)\psi\bigr\|^2\leq 
C_2 \exp(2/M)\|\chi_1\psi\|^2\leq  C_2 \exp(2/M)\|\psi\|^2.
\]
On the other hand, since $MB^{1/4}\bigl|1-|x|\bigr|\leq 1$ on the support 
of $\chi_1$ it is clear that
\[
\bigl\|\chi_1\exp\bigl(B^{1/4}\bigl|1-|x|\bigr|\bigr)\psi\bigr\|^2
\leq \exp(2/M)\|\psi\|^2.
\]
Combining these two last inequalities we find~\eqref{eq:agmon} with
$C=(1+C_2)\exp(1/M)$.

To prove \eqref{eq:AgmonGradient} we essentially only have to reinsert the 
$L^2$-estimate in the previous calculations. By monotone convergence and the 
IMS-formula, we have
\begin{align}
\int_{\R^2}\exp\bigl(2B^{1/4}\bigl|1-|x|\bigr|\bigr)
&|(-i\nabla + B {\mathbf F})\psi|^2\,dx \nonumber \\
&= \lim_{\ell \rightarrow \infty} 
\int_{\R^2}\exp\bigl(2\Phi_\ell\bigr)|(-i\nabla + B {\mathbf F})\psi|^2\,dx 
\nonumber\\
&=\lim_{\ell \rightarrow \infty}\q[\Psi] 
- \int |\nabla \Phi_\ell|^2 |\Psi|^2 \,dx.
\end{align}
The last term is negative, and we can estimate the first term using again the 
IMS-formula and \eqref{eq:estone} as
\begin{align}
\q[\Psi] \leq \q[\chi_1\Psi] + \q[\chi_2 \Psi] 
\leq (\delta B + C_2 B^{1/2}) \| \Psi \|^2
\end{align}
(with $C_2 = 2 C_1 M^2 + \omega + 16$ and using $\alpha = \sigma =1/4$). 
Now~\eqref{eq:AgmonGradient} follows from~\eqref{eq:agmon}.
\end{proof}

With the help of Proposition~\ref{prop:firstagmon}, we now get a first 
control of the involved angular momenta.

\begin{lemma}\label{lem:LocAngMomFirst}
Let $\delta \geq 0$.
Suppose $\psi = u_m e^{-im\theta}$ is an eigenfunction of $\Ham(B)$ 
with eigenvalue below $\delta B + \omega B^{1/2}$. Then
\begin{align}
m = \Phi B + {\mathcal O}(B^{3/4}).
\end{align}
\end{lemma}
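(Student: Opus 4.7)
The plan is to mimic the proof of Lemma~\ref{lem:RestrictionOnM}, but exploiting the weaker (scale $B^{-1/4}$) Agmon localization from Proposition~\ref{prop:firstagmon}, which is what is available when $\delta$ is allowed to vanish. As before, I would drop the (non-negative) kinetic term in $\q_m[u_m]$ and work with the potential
\[
\Bigl(\frac{m}{r} - B a(r)\Bigr)^2 = \frac{1}{r^2}\bigl(m - B r a(r)\bigr)^2,
\]
using~\eqref{eq:rar_expansion} to write $B r a(r) = \Phi B + \mathcal{O}(B|r-1|)$ on a fixed neighborhood $|r-1|\leq 1/2$ of the unit circle. The contribution of the complementary region $|r-1|>1/2$ to every relevant integral is exponentially small in $B^{1/4}$ by~\eqref{eq:agmon}, so I can safely restrict to a neighborhood of $r=1$.

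On this neighborhood I would apply the elementary inequality $(a-b)^2\geq \tfrac12 a^2 - b^2$ with $a = m-\Phi B$ and $b = \mathcal{O}(B|r-1|)$, giving
\[
\bigl(m - B r a(r)\bigr)^2 \;\geq\; \tfrac{1}{2}(m-\Phi B)^2 \;-\; C B^2 (r-1)^2.
\]
Integrating against $|u_m|^2\, r\,dr$, the first term produces $\tfrac12(m-\Phi B)^2\,\|u_m\|^2(1+o(1))$. The second term is controlled via the Agmon weight: since
\[
\sup_{s\geq 0} s^2 e^{-2 B^{1/4} s} \;\leq\; C B^{-1/2},
\]
inserting the factor $e^{-2B^{1/4}|r-1|}e^{2B^{1/4}|r-1|}$ and using~\eqref{eq:agmon} gives
\[
\int_{\R^2} (r-1)^2 |u_m|^2\,r\,dr \;\leq\; C B^{-1/2} \|u_m\|^2.
\]

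Combining these two estimates yields $\q_m[u_m] \geq \tfrac14(m-\Phi B)^2\|u_m\|^2 - C B^{3/2}\|u_m\|^2$. On the other hand $\q_m[u_m] = \lambda\|u_m\|^2 \leq (\delta B + \omega B^{1/2})\|u_m\|^2 = \mathcal{O}(B)\|u_m\|^2$, so $(m-\Phi B)^2 \leq C B^{3/2}$, which is precisely the claim $m = \Phi B + \mathcal{O}(B^{3/4})$.

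The only delicate point, and what sets the exponent $3/4$ (rather than $1/2$ as in Lemma~\ref{lem:RestrictionOnM}), is the $B^{-1/2}$ factor extracted from $\int (r-1)^2 |u_m|^2\,r\,dr$; this is forced by the coarser localization scale $B^{-1/4}$ of Proposition~\ref{prop:firstagmon} valid in the $\delta\geq 0$ regime, compared to the scale $B^{-1/2}$ available when $\delta>0$ was strict. Once this Agmon input is combined with the Cauchy splitting of the potential, everything else is routine.
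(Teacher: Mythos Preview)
Your argument is correct and is precisely the approach the paper indicates: it omits the proof but states that it follows that of Lemma~\ref{lem:RestrictionOnM}, replacing the $B^{-1/2}$ Agmon localization by the coarser $B^{-1/4}$ scale of Proposition~\ref{prop:firstagmon}. Your explicit accounting of how the $B^{-1/2}$ bound on $\int (r-1)^2|u_m|^2\,r\,dr$ produces the $B^{3/2}$ error term, and hence the $B^{3/4}$ exponent, fills in exactly the details the authors left to the reader.
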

 
 The proof of Lemma~\ref{lem:LocAngMomFirst} is similar to the one of 
Lemma~\ref{lem:RestrictionOnM}---taking into account the weaker localization 
given by Proposition~\ref{prop:firstagmon}---and will be omitted.

\subsection{A detailed expansion for $m- \Phi B = {\mathcal O}(B^{1/2})$}
By Lemma~\ref{lem:uniqueeigenvalue2} there is at most one eigenvalue of 
$\Ham_m(B)$ for sufficiently low energy. So it suffices to construct a trial 
state. The trial function (and all its derivatives) will be localized on the 
length scale $B^{-1/2}$ near $r=1$ (see \eqref{eq:TrialStateFinalHorrible} for 
the explicit choice of trial state). Also the function has support away from 
$r=0$. The calculation is slightly different in different regimes of angular 
momenta $m$. In this subsection, we consider angular momenta satisfying that
\begin{align}\label{eq:GoodRestrictionOnm}
| m - \Phi B | \leq M B^{1/2},
\end{align}
(for some fixed $M>0$). The other case, where $M B^{1/2} \leq | m - \Phi B | 
\leq M' B^{3/4}$ is the object of the next subsection.

We will start by doing a formal expansion of the operator 
$\h=\frac{1}{B}\Ham_m(B)$. We write
\begin{equation}
\label{eq:mexp}
m=\Phi B+\mu_1B^{1/2}+\mu_2.
\end{equation}
With the localization of the trial state in mind, we introduce the new variable 
\[
\rho=(\delta B)^{1/2}(r-1).
\]
This leads to the expansion of our operator as in \eqref{eq:expanded} but as operators on $L^2({\mathbb R})$. Since in the present situation we do not have a boundary, we make the further translation $s:= \rho - \mu_1/\sqrt{\delta}$ to find
\[
\h = \h_0+B^{-1/2}\h_1+B^{-1}\h_2+\ldots
\]
where 
\begin{equation*}
\begin{aligned}
\h_0 &= \delta\Bigl(-\frac{d^2}{ds^2}+s^2\Bigr),\\
\h_1 &= -\delta^{1/2}\frac{d}{ds}+
s\delta^{-1/2}\bigl(\mu_1^2-\delta  \bigl(2\mu_2+s^2\bigr)\bigr),\\
\h_2 &=(s+\mu_1\delta^{-1/2})\frac{d}{ds}
+
\mu_2^2
+\frac{\left(-\mu_1^2+3\delta  s^2
+2 \delta^{1/2} \mu_1 s\right)}{\delta }\mu_2 \\
&+\frac{(\mu_1+\delta^{1/2}s)^2\bigl(4ks(\mu_1+\delta^{1/2}s)
+3\delta^{1/2}(\mu_1^2-6\delta^{1/2}s\mu_1+5\delta s^2)\bigr)}
{12\delta^{5/2}}.
\end{aligned}
\end{equation*}
We do the same Ansatz as above and compare order by order:

\paragraph{\bf Order $B^0$:} To leading order we find
\[
\h_0 v_0=\lambda_0 v_0.
\]
Thus, we choose 
\begin{equation}
\label{eq:gaussian}
v_0=\frac{1}{\pi^{1/4}}\exp(-s^2/2)
\end{equation}
as the normalized ground 
state of the harmonic oscillator, and $\lambda_0=\delta$.

\paragraph{\bf Order $B^{-1/2}$:} Here we get 
\[
(\h_0-\lambda_0)v_1 = (\lambda_1-\h_1)v_0.
\]
By taking scalar product (with measure $ds$), we find
\[
0=\langle v_0,(\h_0-\lambda_0)v_1\rangle 
= \lambda_1-\langle v_0,\h_1v_0\rangle.
\]
Since $v_0$ is an even function it holds that $\langle v_0,\h_1v_0\rangle=0$
and thus $\lambda_1=0$. Moreover, since we can choose $v_1\perp v_0$, we can
let $v_1$ be the regularized resolvent 
$(\h_0-\lambda_0)^{-1}_{\text{reg}}$ of $-\h_1v_0$,
\begin{equation}
\label{eq:gaussianv1}
v_1 = -(\h_0-\lambda_0)^{-1}_{\text{reg}}\bigl[\h_1v_0\bigr].
\end{equation}

\paragraph{\bf Order $B^{-1}$:} We get
\[
(\h_0-\lambda_0)v_2 = (\lambda_2-\h_2)v_0+(\lambda_1-\h_1)v_1.
\]
Taking scalar product with $v_0$ again and using the fact that $\lambda_1=0$, 
gives
\[
\lambda_2 = \langle v_0,\h_2v_0\rangle + \langle v_0,\h_1 v_1\rangle.
\]
Now it holds that (remember: $v_0=\frac{1}{\pi^{1/4}}\exp(-s^2/2)$)
\begin{align*}
\langle sv_0'(s),v_0(s)\rangle&=-\frac{1}{2},&
\langle s^2 v_0(s),v_0(s)\rangle & = \frac{1}{2},\\
\langle v_0'(s),v_0(s)\rangle & = 0,&
\langle s^4 v_0(s),v_0(s)\rangle & = \frac{3}{4},\\
\langle s^j v_0(s),v_0(s)\rangle & = 0,\quad j\text{ odd},&
\langle s^6 v_0(s),v_0(s)\rangle & = \frac{15}{8},\\
\end{align*}
and so 
\begin{equation}
\label{eq:oBm11}
\langle v_0,\h_2v_0\rangle = \frac{1}{4 \delta ^2}\mu_1^4
+\frac{2k-3\delta-4\delta\mu_2}{4\delta^2}\mu_1^2
+\mu_2^2+\frac{3}{2}\mu_2+\frac{7}{16}+\frac{k}{4\delta}.
\end{equation}
The term $\langle v_0,\h_1 v_1\rangle$ is more difficult do calculate. 
But noting that
\begin{equation*}
\begin{aligned}
(\h_0-\lambda_0)\frac{1}{2\delta} sv_0&=sv_0,\\
(\h_0-\lambda_0)\Bigl(-\frac{1}{2\delta}sv_0\Bigr) &= v_0',\quad\text{and}\\
(\h_0-\lambda_0)\frac{s(s^2+3)}{6\delta}v_0 &= s^3 v_0,
\end{aligned}
\end{equation*}
we find that
\[
\begin{aligned}
v_1(s) &= -(\h_0-\lambda_0)^{-1}_{\text{reg}}\bigl(\h_1v_0)\\ 
&= (\h_0-\lambda_0)^{-1}_{\text{reg}}\Bigl(\delta^{1/2}v_0'(s)
-\delta^{-1/2}\mu_1^2 sv_0(s)+2\delta^{1/2}\mu_2sv_0(s)
+\delta^{1/2}s^3v_0(s)\Bigr)\\
& = -\frac{1}{2\delta^{1/2}}sv_0-\frac{\mu_1^2}{2\delta^{3/2}}sv_0
+\frac{\mu_2}{\delta^{1/2}}sv_0+\frac{s(s^2+3)}{6\delta^{1/2}}v_0. 
\end{aligned}
\]
A direct calculation shows that
\[
\begin{aligned}
\h_1v_1 &= -\frac{s^2}{2 \delta ^2}\mu_1^4
+\frac{\left(4 s^4+3 (4 \mu_2-1) s^2+3\right)}{6 \delta}\mu_1^2\\ 
&\quad+\frac{1}{6} \left(-6 \mu_2-s^6+(1-8 \mu_2) s^4
-3 \left(4 \mu_2^2-2 \mu_2+1\right)s^2\right)v_0,
\end{aligned}
\]
so, using the relations above, we find that
\begin{equation}
\label{eq:oBm12}
\langle v_0,\h_1 v_1\rangle = -\frac{1}{4 \delta ^2}\mu_1^4
+\frac{(4 \mu_2+3)}{4 \delta }\mu_1^2
-\frac{1}{16} (8 \mu_2 (2 \mu_2 + 3)+7).
\end{equation}
Combining~\eqref{eq:oBm11} and~\eqref{eq:oBm12} we get  
\[
\lambda_2 = \langle v_0,\h_2v_0\rangle + \langle v_0,\h_1 v_1\rangle
= k\Bigl(\frac{1}{2\delta^2}\mu_1^2 + \frac{1}{4\delta}\Bigr).
\]
We see that $\lambda_2$ is minimal when $\mu_1=0$.

\begin{proof}[Proof of Theorem~\ref{thm:R2deltapos}]
Using Proposition~\ref{prop:BetterLocInm} below it suffices to consider angular momenta satisfying \eqref{eq:GoodRestrictionOnm}.

To finish the proof, based on the calculations above, it is sufficient to provide the trial state that 
gives the right energy. This is done as in the case of the exterior of the disc,
see Section~\ref{sec:section4} for the details. 

We write down the trial state (and $\lambda$) for the sake of completeness. From
the calculations above it follows that (here $\mu_1=0$ and $\mu_2$ is bounded)
\[
\lambda = \lambda_0+\lambda_1 B^{-1/2}+\lambda_2 B^{-1} = \delta
+\frac{k}{4\delta}B^{-1}
\]
Let $v_0$ be the gaussian 
given in~\eqref{eq:gaussian}, $v_1$ the function given in~\eqref{eq:gaussianv1} 
and
\[
v_2(s)=(\h_0-\lambda_0)^{-1}_{\text{reg}}\bigl[(\lambda_2-\h_2)v_0 
+ (\lambda_1-\h_1)v_1\bigr].
\]
Next, let
\[
v(s)=v_0 + v_1 B^{-1/2} + v_2 B^{-1}.
\]
With $\chi\in C_0^{\infty}(\R)$ satisfying $\chi(0)=1$ and $\epsilon=1/100$ we
define our trial state $\tilde{v}(r)$ as
\begin{align}\label{eq:TrialStateFinalHorrible}
\tilde{v}(r)
=B^{1/4}\chi(B^{1/2-\epsilon}(r-1))v\bigl((\delta B)^{1/2}(r-1)\bigr).
\end{align}
\end{proof}

\subsection{Exluding large values of $m - \Phi B$}
In this subsection we will make a preliminary calculation to show that the ground state energy of $\Ham(B)$ restricted to angular momentum $m$ is too large, unless $m - \Phi B = {\mathcal O}(B^{-1/2})$.

\begin{lemma}\label{lem:shooting}
Let $C_0>0$, then there exists $C_1>0$ such that
if $|m - \Phi B| \leq C_0 B^{3/4}$, then 
\begin{align}
\dist(\sigma(\Ham_m(B), \delta B + f(\eta)\sqrt{B}) \leq C_1(|\eta| B^{-1/4} + B^{-1/2}).
\end{align}
Here $\eta := \frac{B \Phi - m}{\delta B^{3/4}}$, and
\begin{align}
f(\eta) = 
\frac{1}{2}k\eta^2.
\end{align}
\end{lemma}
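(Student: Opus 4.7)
The plan is to construct an explicit quasi-mode $\phi$ for $\Ham_m(B)$ at the value $\mu_B := \delta B + f(\eta)\sqrt{B}$ and then appeal to the spectral theorem: if $\|(\Ham_m(B) - \mu_B)\phi\|_{L^2(r\,dr)} \leq C\|\phi\|_{L^2(r\,dr)}$, then $\dist(\sigma(\Ham_m(B)), \mu_B) \leq C$. First I would pinpoint the localization center. Since $\tilde\beta > 0$, the function $r \mapsto ra(r)$ is strictly increasing, so there is a unique $r^{\ast} = r^{\ast}(m,B)$ near $r = 1$ satisfying $r^{\ast} a(r^{\ast}) = m/B$; at this point the classical potential $V_m(r) = (m/r - Ba(r))^2$ vanishes. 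Inverting \eqref{eq:rar_expansion} under the hypothesis $|\eta| \leq C_0/\delta$ gives
\[
r^{\ast} = 1 - \eta B^{-1/4} - \tfrac{\eta^2}{2} B^{-1/2} + O(|\eta|^3 B^{-3/4}).
\]
In the rescaled variable $\rho = (\delta B)^{1/2}(r - r^{\ast})$, the local harmonic approximation of $V_m$ at $r^{\ast}$ is $\delta B\,\rho^2$, so $\Ham_m(B)$ has leading part $\delta B(-\partial_\rho^2 + \rho^2)$ whose normalized ground state is the Gaussian $v_0(\rho) = \pi^{-1/4} e^{-\rho^2/2}$ with eigenvalue $\delta B$.

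The $\sqrt{B}$-correction $f(\eta)\sqrt{B} = \tfrac{k\eta^2}{2}\sqrt{B}$ arises from the variation of the magnetic field near $r^{\ast}$: using $\tilde\beta'(1) = 0$ and $\tilde\beta''(1) = k$, one finds $\tilde\beta(r^{\ast}) = \delta + \tfrac{k\eta^2}{2}B^{-1/2} + O(B^{-3/4})$, and this $O(B^{-1/2})$ shift of the effective harmonic frequency translates into the announced correction. A potentially dangerous $B^{3/4}$-order term in the expansion of $(m - Bra(r))^2$, coming from the cross-term with the linear piece $-B\delta(r-1)$, is exactly cancelled by the $O(B^{-1/4})$ correction to the factor $1/r^2$ (since $1/(r^{\ast})^2 = 1 + 2\eta B^{-1/4} + O(B^{-1/2})$), leaving only even-in-$\rho$ terms that combine to give the $\tfrac{k\eta^2}{2}\sqrt{B}$ shift plus odd-in-$\rho$ residues whose Gaussian expectations vanish. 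I would then take as quasi-mode
\[
\phi(r) = \chi(B^{1/2-\epsilon}(r - r^{\ast}))\, v_0((\delta B)^{1/2}(r - r^{\ast}))
\]
with a smooth cut-off $\chi$ (refined, if necessary, by a subleading corrector $B^{-1/2}v_1$ constructed as in Lemma~\ref{lem:expansion}) and estimate the residual $\|(\Ham_m(B) - \mu_B)\phi\|_{L^2(r\,dr)}/\|\phi\|_{L^2(r\,dr)}$ directly.

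The main obstacle is the bookkeeping of all subleading contributions: the Taylor expansions of $\tilde\beta(r)$ and $ra(r)$ around $r^{\ast}$, the factors $r$ (from the measure $r\,dr$) and $1/r^2$ (inside $V_m$) with $r^{\ast} = 1 + O(|\eta|B^{-1/4})$, and the additional first-order term $-r^{-1}\partial_r$ in $\Ham_m(B)$ must all be tracked at orders $B^{3/4}$, $B^{1/2}$, $B^{1/4}$ to verify the cancellations that bring the residual down to the announced $|\eta|B^{-1/4} + B^{-1/2}$ scale. The $|\eta|B^{-1/4}$ term reflects the shift of center (so it vanishes when $m = \Phi B$), while the $B^{-1/2}$ term reflects the intrinsic width of the Gaussian against the remaining subleading structure of the potential. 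Once this direct estimate is in hand, the spectral theorem applied to the self-adjoint operator $\Ham_m(B)$ gives the claimed distance bound.
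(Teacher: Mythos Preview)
Your plan is sound and is essentially the paper's own strategy: build a Gaussian quasi-mode localized at the minimum of the effective potential and invoke the spectral theorem. There is one genuine difference worth highlighting. You center at the \emph{exact} zero $r^{\ast}$ of $V_m$ and read off $f(\eta)=\tfrac{k}{2}\eta^2$ directly as the shift of the harmonic frequency, via $\tilde\beta(r^{\ast})=\delta+\tfrac{k}{2}\eta^2 B^{-1/2}+\ldots$. The paper instead centers at the first-order approximation $r=1-\eta B^{-1/4}$, writes $\rho=(r-1+\eta B^{-1/4})\sqrt{B}$, and expands $\Ham_m(B)=B(\h_{m,0}+B^{-1/4}\h_{m,1}+B^{-1/2}\h_{m,2})+\ldots$; the value $\tfrac{k}{2}\eta^2$ then emerges only from the second-order combination $\langle v_0,\h_{m,2}v_0\rangle+\langle v_0,\h_{m,1}v_1\rangle$, after a cancellation of several $\delta$- and $c$-dependent pieces. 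Your centering makes this identification transparent and spares you that cancellation.

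One point to tighten: the natural small parameter here is $B^{-1/4}$, not $B^{-1/2}$, because $r^{\ast}-1$ and the coefficients $g^{(j)}(r^{\ast})$, $1/r^{\ast}$, $\tilde\beta(r^{\ast})$ all carry $\eta B^{-1/4}$ corrections. Thus the operator expansion around $r^{\ast}$ has nontrivial contributions at orders $B^{3/4}$ and $B^{1/4}$ (for instance $-\eta\,\delta^{1/2}B^{1/4}\partial_\rho$ from the $-r^{-1}\partial_r$ term), not only at integer powers of $B^{1/2}$. Accordingly the paper takes a three-term trial state $v_0+B^{-1/4}v_1+B^{-1/2}v_2$; a single corrector at order $B^{-1/2}$, as you suggest, would leave an uncontrolled $B^{1/4}$ residual. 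With that adjustment your bookkeeping program goes through exactly as in the paper.
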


From Lemma~\ref{lem:shooting} we can improve the localization in angular momentum.

\begin{prop}\label{prop:BetterLocInm}
Let $\omega>0$. Then there exists $M, B_0>0$ such that if $B\geq B_0$ and $\Ham_m(B)$ has an eigenvalue below $\delta B + \omega$, then
\begin{align}
| m - \Phi B | \leq M B^{1/2}.
\end{align}
\end{prop}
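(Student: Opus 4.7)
The plan is to combine the rough angular-momentum localization of Lemma~\ref{lem:LocAngMomFirst} with the quasi-mode estimate of Lemma~\ref{lem:shooting} and the spectral uniqueness of Lemma~\ref{lem:uniqueeigenvalue2}. The point of the argument is to force the spectral point produced by Lemma~\ref{lem:shooting} to coincide with the low eigenvalue that is assumed to exist; once this identification is made, the formula $f(\eta)=\tfrac{1}{2}k\eta^2$ controls $|\eta|$, hence $|m-\Phi B|$.

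Concretely, suppose $\Ham_m(B)$ has an eigenvalue $\lambda\le\delta B+\omega$. I would first invoke Lemma~\ref{lem:LocAngMomFirst} (applied to the corresponding eigenfunction, viewed as an angular-momentum-$m$ eigenfunction of $\Ham(B)$) to deduce the coarse bound $|m-\Phi B|\le C_0 B^{3/4}$, equivalently $|\eta|\le C_0/\delta$ uniformly in $B$. Lemma~\ref{lem:shooting} then supplies a point $\mu\in\spec(\Ham_m(B))$ with
\[
\bigl|\mu-\bigl(\delta B+\tfrac{k}{2}\eta^2 B^{1/2}\bigr)\bigr|
\le C_1\bigl(|\eta|B^{-1/4}+B^{-1/2}\bigr),
\]
and the coarse bound on $|\eta|$ yields $\mu\le \delta B+C_2 B^{1/2}$ for a constant $C_2$ depending only on $C_0$, $k$ and $\delta$. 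Fixing $\omega':=C_2+1$ and applying Lemma~\ref{lem:uniqueeigenvalue2} with this parameter, we know that at most one eigenvalue of $\Ham_m(B)$ lies below $\delta B+\omega' B^{1/2}$; for $B$ sufficiently large both $\lambda$ and $\mu$ sit in this window, so uniqueness forces $\lambda=\mu$.

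The bound $\lambda\le\delta B+\omega$ then reads $\tfrac{k}{2}\eta^2 B^{1/2}\le\omega+C_1(|\eta|B^{-1/4}+B^{-1/2})$. Substituting $y:=\eta B^{1/4}$ this becomes $\tfrac{k}{2}y^2\le\omega+C_1|y|B^{-1/2}+C_1 B^{-1/2}$, which for $B$ sufficiently large forces $|y|\le M_0$, with $M_0$ depending only on $\omega$ and $k$. Undoing the substitution gives $|\eta|\le M_0 B^{-1/4}$, i.e.\ $|m-\Phi B|\le \delta M_0 B^{1/2}$, which is the claim.

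The main obstacle is justifying that the spectral point $\mu$ delivered by Lemma~\ref{lem:shooting} really is the same eigenvalue as $\lambda$: a priori it could correspond to an excited state of $\Ham_m(B)$, in which case $\lambda$ would not be controlled by the quasi-mode energy at all. The resolution is precisely the interplay just described: the coarse $O(B^{3/4})$ bound on $|m-\Phi B|$ pins the quasi-mode energy into the $O(B^{1/2})$-wide spectral window above $\delta B$ to which Lemma~\ref{lem:uniqueeigenvalue2} applies, and within that window there is room for at most one eigenvalue, giving $\lambda=\mu$.
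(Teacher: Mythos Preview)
Your proof is correct and follows the same route as the paper, whose proof is the single sentence ``This follows by combining Lemma~\ref{lem:uniqueeigenvalue2} and~\ref{lem:shooting}.'' You have spelled out precisely the mechanism the paper leaves implicit, including the preliminary appeal to Lemma~\ref{lem:LocAngMomFirst} needed to put $m$ in the range $|m-\Phi B|\le C_0 B^{3/4}$ where Lemma~\ref{lem:shooting} applies, and the identification $\lambda=\mu$ via the uniqueness window of Lemma~\ref{lem:uniqueeigenvalue2}.
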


\begin{proof}[Proof of Proposition~\ref{prop:BetterLocInm}]
This follows by combing Lemma~\ref{lem:uniqueeigenvalue2} and \ref{lem:shooting}.
\end{proof}

\begin{proof}[Proof of Lemma~\ref{lem:shooting}]
The proof is by trial state. We will construct a function (see specific choice in \eqref{eq:TrialStateChoice} below) $\phi \in \dom(\Ham_m(B))$ such that $\| \phi \| \approx 1$ and 
\begin{align}\label{eq:TrialStateCalcAgain}
\| (\Ham_m(B) - [ \delta B + f(\eta)\sqrt{B}]) \phi\| \leq C_1 (|\eta| B^{-1/4} + B^{-1/2}).
\end{align}
By the Spectral Theorem, this implies the Lemma, like in Section~\ref{sec:section4}.
The function that we construct will be localized near $r=1$ on the length scale $B^{-1/2}$ (again this is exactly as in Section~\ref{sec:section4}).

We recall that
\begin{align}
\Ham_m(B) & = - \frac{d^2}{dr^2} - \frac{1}{r}\frac{d}{dr} 
+ \frac{1}{r^2}\bigl(m-B ra(r)\bigr)^2.
\end{align}
Here we will need to expand $\tilde{\beta}$ further than the second
derivative, so we use the full expansion of $r a(r)$ from~\eqref{eq:rar_expansion}.

Introducing $\eta$ as in the lemma and $\rho = (r-1+ B^{-1/4} \eta) \sqrt{B}$, 
we find

\begin{align}
\Ham_m(B) &= - B \frac{d^2}{d\rho^2} - \frac{\sqrt{B}}{1- B^{-1/4} \eta + B^{-1/2} \rho} \frac{d}{d\rho} \nonumber \\
&\quad + \frac{1}{(1- B^{-1/4} \eta + B^{-1/2} \rho)^2}\times \\
&\qquad \biggl[ m - B (1- B^{-1/4} \eta + B^{-1/2} \rho) a(1- B^{-1/4} \eta + B^{-1/2} \rho) \biggr]^2.
\end{align}
Since we will only act with $\Ham_m(B)$ on functions which in the $\rho$ variable are Schwartz functions (see specific choice in \eqref{eq:TrialStateChoice} below), we can treat $\rho$ as a quantity of order $1$ (in terms of powers of $B$), and expand

\begin{align}
\Ham_m(B) = B \left( \mathfrak{h}_{m,0} + B^{-1/4} \mathfrak{h}_{m,1} + B^{-1/2} \mathfrak{h}_{m,2} \right) + {\mathcal O}( |\eta| B^{1/2}) + {\mathcal O}(1),
\end{align}
where
\begin{align}
\mathfrak{h}_{m,0} &= - \frac{d^2}{d\rho^2} + \delta^2 \Bigl(\rho + \frac{\eta^2}{2}\Bigr)^2,\\
\mathfrak{h}_{m,1}&= 
\frac{1}{3}\delta\eta^3(3\delta-k)\Bigl(\rho + \frac{\eta^2}{2}\Bigr),\quad\text{and}\\
\mathfrak{h}_{m,2} &= - \frac{d}{d\rho} 
-\delta^2\Bigl(\rho + \frac{\eta^2}{2}\Bigr)^3
+k\delta\eta^2\Bigl(\rho + \frac{\eta^2}{2}\Bigr)^2\\
&\quad+\frac{1}{12}\Bigl(\delta\eta^4(c-7k+15\delta)\Bigr)\Bigl(\rho + \frac{\eta^2}{2}\Bigr)
+\frac{1}{36}(k-3\delta)^2\eta^6.
\end{align}
We choose 
\begin{align}
v_0 &= \Bigl(\frac{\delta}{\pi}\Bigr)^{1/4} \exp\Bigl( - \frac{\delta}{2}\bigl(\rho + \eta^2/2\bigr)^2\Bigr),
\end{align}
which is the normalized ground state eigenfunction of $\mathfrak{h}_{m,0}$ with eigenvalue $\delta$. 

Next, 
\[
\mathfrak{h}_{m,1} v_0 = \frac{1}{3}\delta\eta^3(3\delta-k)(\rho+\eta^2/2)v_0.
\]
Thus, we want to solve
\[
(\mathfrak{h}_{m,0}-\delta) v_1	= -\mathfrak{h}_{m,1} v_0=-\frac{1}{3}\delta\eta^3(3\delta-k)(\rho+\eta^2/2)v_0,
\]
for $v_1$. A calculation shows that (note that 
$\mathfrak{h}_{m,1} v_0$ is the first exited state of $\mathfrak{h}_{m,1}$ with eigenvalue $3\delta$, in particular orthogonal to $v_0$)
\[
v_1 = -\frac{1}{2\delta}\mathfrak{h}_{m,1} v_0=-\frac{1}{6}\eta^3(3\delta-k)(\rho+\eta^2/2)v_0
\]
gives a solution.

Further calculations yields (the $0$ is there since 
$\langle v_0,\mathfrak{h}_{m,1}v_0\rangle = 0$)
\[
\langle v_0,\mathfrak{h}_{m,2} v_0\rangle + \langle v_0,(\mathfrak{h}_{m,1}-0)v_1\rangle
=\frac{1}{2}k\eta^2.
\]

We further choose
\begin{align}
v_2 &= - (\mathfrak{h}_{m,0} - \delta)_{\text{reg}}^{-1} [ (\mathfrak{h}_{m,2} - f(\eta))v_0 + \mathfrak{h}_{m,1} v_0].
\end{align}
With $\phi$ in \eqref{eq:TrialStateCalcAgain} being chosen as
\begin{align}\label{eq:TrialStateChoice}
\phi =( v_0 + B^{-1/4} v_1 + B^{-1/2} v_2) \times \chi(B^{1/2-\varepsilon}(r-1)),
\end{align}
(similarly to \eqref{eq:explicitQuasimode}), it is immediate to verify \eqref{eq:TrialStateCalcAgain}.
\end{proof}

\section{The case of the whole plane with $\delta=0$}
\label{sec:plane0}

Here we will consider the case $\Omega=\R^2$ and a magnetic field $\beta$
satisfying Assumption~\ref{ass:Magnetic}, with $\delta=0$. We recall that
in this section, $k>0$.

By Proposition~\ref{prop:firstagmon} and Lemma~\ref{lem:LocAngMomFirst} we have 
localization of eigenfunctions corresponding to low-lying eigenvalues on the 
length scale $B^{-1/4}$ and to angular momenta 
$m = \frac{B}{12} + {\mathcal O}(B^{3/4})$.

By~\eqref{eq:Cancellation} and~\eqref{eq:rar-split}, for $|r-1|\leq 1$,
\begin{align}\label{eq:TaylorAgain}
\Bigl( \frac{m}{r} - B a(r)\Bigr)^2 &= \frac{1}{r^2}\biggl(m-\Phi B - \frac{Bk}{6}(r-1)^3+B\mathcal{O}(|r-1|^4)\biggr)^2 \nonumber \\
&\geq \frac{1}{2} \frac{(m-B \Phi)^2}{r^2} - C B^2 r^{-2} (r-1)^6.
\end{align}
Invoking the localization estimates we get the following 
strengthening of Lemma~\ref{lem:LocAngMomFirst}.

\begin{lemma}
Let $\delta = 0$.
Suppose $\psi = u_m e^{-im\theta}$ is an eigenfunction of $\Ham(B)$ 
with eigenvalue below $ \omega B^{1/2}$. Then
\begin{align}
m = \Phi B + {\mathcal O}(B^{1/4}).
\end{align}
\end{lemma}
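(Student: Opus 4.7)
My plan is to bootstrap the crude angular-momentum localization $m = \Phi B + \mathcal{O}(B^{3/4})$ from Lemma~\ref{lem:LocAngMomFirst} using the sharper potential estimate \eqref{eq:TaylorAgain}, which is available precisely because $\delta = 0$ cancels the linear and quadratic terms in the Taylor expansion \eqref{eq:rar_expansion} of $r a(r)$ near $r = 1$. The key mechanism is that the leading departure of $(m - Bra(r))^2$ from the constant $(m-\Phi B)^2$ is of size $B^2(r-1)^6$, and under Agmon localization at scale $B^{-1/4}$, sixth moments of $|r-1|$ are small enough to be absorbed into the energy budget $\omega B^{1/2}$.

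Concretely, I would start from the identity
\[
\eigone{\Ham_m(B)} = \q_m[u_m] \geq \int_I \Bigl(\frac{m}{r} - B a(r)\Bigr)^2 |u_m(r)|^2 \, r\, dr,
\]
and split the integration region into $|r-1| \leq 1$ and $|r-1| > 1$. On the latter region I would use the Agmon bound \eqref{eq:agmon} from Proposition~\ref{prop:firstagmon} (valid for $\delta = 0$) to show that the contribution is exponentially small in $B^{1/4}$, and in particular $o(1)$. On $|r-1| \leq 1$ I would plug in \eqref{eq:TaylorAgain} to get
\[
\q_m[u_m] \geq \frac{(m-\Phi B)^2}{2} \int_{|r-1|\leq 1} \frac{|u_m|^2}{r}\, r\,dr \;-\; C B^2 \int_{|r-1|\leq 1} (r-1)^6 |u_m|^2 \, r\,dr + o(1).
\]

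The first integral is bounded below by a constant (again by Proposition~\ref{prop:firstagmon}, the mass of $u_m$ is essentially concentrated in $|r-1| \leq 1$, and $1/r \geq 1/2$ there). For the second, I would apply the Agmon estimate to gain decay: the weight $\exp(2B^{1/4}|1-|x||)$ in \eqref{eq:agmon} dominates any polynomial in $B^{1/4}|r-1|$, so
\[
\int (r-1)^6 |u_m|^2 \, r\, dr \;\leq\; C B^{-3/2} \int \exp(2B^{1/4}|1-|x||)|u_m|^2 \, r\, dr \;\leq\; C' B^{-3/2} \|u_m\|^2.
\]
Multiplying by $B^2$ turns this into an $\mathcal{O}(B^{1/2})$ error, which matches the assumed eigenvalue size.

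Assembling these estimates and using $\q_m[u_m] \leq \omega B^{1/2} \|u_m\|^2$ gives
\[
\frac{(m-\Phi B)^2}{4} \|u_m\|^2 \;\leq\; (\omega + C') B^{1/2} \|u_m\|^2,
\]
from which $|m - \Phi B| = \mathcal{O}(B^{1/4})$ follows at once. The main subtlety I expect is making sure the polynomial-weight-vs-exponential-weight trade in step 3 is clean; once the Agmon estimate \eqref{eq:agmon} is in hand this is just the observation $t^6 e^{-2t} \leq \mathrm{const}$, applied with $t = B^{1/4}|r-1|$, so it is essentially routine. No new spectral input is needed beyond Proposition~\ref{prop:firstagmon} and the Taylor cancellation peculiar to $\delta = 0$.
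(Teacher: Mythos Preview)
Your proposal is correct and is precisely the argument the paper sketches in one line: insert the Taylor estimate \eqref{eq:TaylorAgain} into $\q_m$ and use the Agmon decay of Proposition~\ref{prop:firstagmon} to control the $(r-1)^6$ error as $\mathcal{O}(B^{1/2})$, yielding $(m-\Phi B)^2 = \mathcal{O}(B^{1/2})$. The only cosmetic slip is a stray factor of $r$ in the first displayed integral (from \eqref{eq:TaylorAgain} the weight is $r^{-2}\cdot r\,dr = r^{-1}\,dr$), but since Agmon localizes to $r\approx 1$ this is harmless.
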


\begin{proof}
The proof follows from inserting \eqref{eq:TaylorAgain} in the formula for the 
quadratic form $\q_m$ and using the decay estimates in 
Proposition~\ref{prop:firstagmon}.
\end{proof}

We also get a similar result to Lemma~\ref{lem:uniqueeigenvalue}.

\begin{lemma}
\label{lem:uniqueeigenvalue3}
Let $\omega < \inf_{\alpha \in {\mathbb R}}\eig{2}{\M(\alpha)}$, with $\M(\alpha)$ 
the operator from Appendix~\ref{sec:Mont}. There exists $B_0>0$ such that if 
$m \in {\mathbb Z}$ and 
$B\geq B_0$, then $\q_m$ admits at most one eigenvalue below 
$(k/2)^{1/2}\omega B^{1/2}$.
\end{lemma}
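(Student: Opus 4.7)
The plan is to mirror the proof of Lemma~\ref{lem:uniqueeigenvalue2}, replacing the de~Gennes model by the Montgomery-type operator $\M(\alpha)$ from Appendix~\ref{sec:Mont} and the length scale $B^{-1/2}$ by $B^{-1/4}$, which (as in Proposition~\ref{prop:firstagmon}) is dictated by the cubic vanishing of the effective potential along $r=1$ when $\delta=0$.

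First I would combine Proposition~\ref{prop:firstagmon} with the preceding lemma and a lower bound on $\q_m$ derived from \eqref{eq:TaylorAgain} to show that $\q_m$ has no eigenvalue below $(k/2)^{1/2}\omega B^{1/2}$ unless $|m - \Phi B| \leq M B^{1/4}$ for some $M>0$. For such $m$, write $m - \Phi B = \alpha B^{1/4}$ and implement the change of variable $\rho = B^{1/4}(r-1)$. Using \eqref{eq:TaylorAgain} together with the exponential Agmon decay, a short computation yields
\[
\frac{1}{(k/2)^{1/2} B^{1/2}}\,\q_m[v] \;=\; \langle \tilde v,\M(\tilde\alpha)\tilde v\rangle + o(1),
\]
where $\tilde v$ is a rescaling of $v$ (with a constant absorbing the factor $k/6$) and $\tilde\alpha$ is a fixed linear function of $\alpha$; the $o(1)$ is uniform on bounded sets of $\tilde\alpha$ and on any family of normalized functions with uniform exponential localization on the scale $B^{-1/4}$.

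Next I would argue by contradiction, in direct analogy with Lemma~\ref{lem:uniqueeigenvalue2}. Suppose $u_1,u_2$ are two orthogonal eigenfunctions of $\q_m$ with eigenvalues at most $(k/2)^{1/2}\omega B^{1/2}$. By Proposition~\ref{prop:firstagmon}, both are exponentially localized on scale $B^{-1/4}$, hence so is every normalized $v \in \Span\{u_1,u_2\}$. The display above then yields $\langle \tilde v,\M(\tilde\alpha)\tilde v\rangle \leq \omega + o(1)$ on a two-dimensional subspace of $L^2(\mathbb{R})$. Since $\inf_{\tilde\alpha}\eig{2}{\M(\tilde\alpha)} > \omega$ by hypothesis, $\M(\tilde\alpha)$ admits at most one eigenvalue below $\omega$ for every $\tilde\alpha$; thus on any two-dimensional subspace the Rayleigh quotient of $\M(\tilde\alpha)$ takes a value at least $\omega + c$ with some $c>0$ on some normalized vector, giving the desired contradiction for $B$ sufficiently large.

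The main obstacle is uniformity of the approximation in $\tilde\alpha$, since $\alpha$ is only known to be bounded by $M$ via the preceding lemma but not necessarily small. This is handled by noting that $\eigone{\M(\tilde\alpha)} \to \infty$ as $|\tilde\alpha|\to\infty$: for $|\tilde\alpha|$ beyond some explicit threshold even the first eigenvalue of the model exceeds $\omega$, so a direct quadratic-form comparison excludes any eigenvalue of $\q_m$ below the stated level in that regime outright; on the complementary compact range of $\tilde\alpha$ the convergence of the rescaled form to $\M(\tilde\alpha)$ is uniform, and the contradiction argument above applies.
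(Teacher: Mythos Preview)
Your proposal is correct and follows essentially the same route as the paper. The paper's own proof is a two-line sketch saying the argument is analogous to that of Lemma~\ref{lem:uniqueeigenvalue}, with the localization estimates reducing $\q_m$ (up to lower order) to the quadratic form of the Montgomery operator $\h_0$ from~\eqref{eq:h_0_Section8}; you have simply written out the details of that sketch, including the min--max contradiction on a two-dimensional subspace and the handling of the range of $\tilde\alpha$.
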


\begin{proof}
The proof is analogous to that of Lemma~\ref{lem:uniqueeigenvalue}.
By the localization estimates already obtained, we can see that $\q_m$ is 
given---up to a lower order error---by the quadratic form of the operator 
$\h_0$ from \eqref{eq:h_0_Section8} below which can be recognized as the `Montgomery' operator reviewed in 
Appendix~\ref{sec:Mont}.
\end{proof}

So now we are again in a situation where we know that a sufficiently precise 
trial state must give the asymptotics of the ground state energy.
We write
\begin{equation}
\label{eq:mexputandelta}
m=\Phi B+\mu_3B^{1/4}+\mu_4
\end{equation}
where we will keep $\mu_3$ and $\mu_4$ bounded.
We perform the change of variables
\[
\rho=B^{1/4}(r-1).
\]

Integrating by parts, we find (with $v(\rho)=B^{-1/8}u(1+B^{-1/4}\rho)$ and 
assuming that $u$ is supported away from $0$) that
\begin{equation}
\label{eq:diffexputandelta}
\frac{1}{B^{1/2}}\int_{0}^{+\infty}\Bigl|\frac{d u}{dr} \Bigr|^2\,r\,dr
=\int_{- \sqrt[4]{B}}^{+\infty} \overline{v} 
\Bigl( - \frac{d^2 v}{d\rho^2} - B^{-1/4}  
( 1 + B^{-1/4} \rho)^{-1} \frac{dv}{d\rho} \Bigr) (1+B^{-1/4}\rho)\,d\rho.
\end{equation}
We let $\h=\frac{1}{B^{1/2}}\Ham_m(B)$ and make the Ansatz
\[
\h = \sum_{j=0}^{+\infty}\h_jB^{-j/4},
\quad 
\lambda=\sum_{j=0}^{+\infty} \lambda_j B^{-j/4},
\quad\text{and}\quad 
v = \sum_{j=0}^{+\infty} v_j B^{-j/4},
\]
and get (with notation from \eqref{eq:rar_expansion} and where 
$d= \tilde{\beta}^{(4)}(1)$)

\begin{align}
\label{eq:h_0_Section8}
\h_0 & = -\frac{d^2}{d\rho^2}+\Bigl(\frac{k\rho^3}{6}-\mu_3\Bigr)^2,\\
\h_1 & = -\frac{d}{d\rho}
- \Bigl(\frac{k\rho^3}{6}-\mu_3\Bigr)
\Bigl(\frac{(k-c)\rho^4}{12}-2\mu_3\rho+2\mu_4\Bigr) \\
\h_2 & = \rho\frac{d}{d\rho}
+ \mu_4^2-4\mu_3\mu_4\rho+3\mu_3^2\rho^2
+ \frac{1}{12}(5k-c)\mu_4\rho^4\\
&\qquad+\frac{1}{60}(6c-d-30k)\mu_3\rho^5
+\frac{1}{2880}(5c^2-18ck+8dk+45k^2)\rho^8.
\end{align}

Next we compare the powers of $B$.
\paragraph{\bf Order $B^0$:}
We note that, after a scaling, $\h_0$ becomes 
\[
\Bigl(\frac{k}{2}\Bigr)^{1/2}\Bigl[-\frac{d^2}{d\rho^2}
+\Bigl(\frac{\rho^3}{3}-(2/k)^{1/4}\mu_3\Bigr)^2\Bigr]
=\Bigl(\frac{k}{2}\Bigr)^{1/2} \M\bigl((2/k)^{1/4}\mu_3\bigr),
\]
with the notation from Appendix~\ref{sec:Mont}. By the results of the appendix,
the ground state eigenvalue 
$\eigone{\M(\alpha)}$ has a unique 
non-degenerate minimum $\Xi$ at $\alpha=0$. So we take $\mu_3=0$ and 
find that $\lambda_0= (k/2)^{1/2}\eigone{\M(0)}=(k/2)^{1/2}\Xi$. 
We furthermore take $v_0$ to be the ground state eigenfunction of $\h_0$ 
(with $\mu_3=0$).

\paragraph{\bf Order $B^{-1/4}$:}
Here the equation becomes
\begin{align}
(\h_1-\lambda_1) v_0 + (\h_0 - \lambda_0)v_1 =0.
\end{align}
Taking scalar products with $v_0$, we get
\begin{align}
\lambda_1 = \langle v_0, \h_1 v_0\rangle = 0,
\end{align}
where we used that $\mu_3 = 0$ and that $v_0$ is an even function.
We also determine the function $v_1$ as 
\begin{align}
v_1 = - (\h_0 - \lambda_0)^{-1}_{\text{reg}} (\h_1 v_0) 
\end{align}

\paragraph{\bf Order $B^{-1/2}$:}
At this order, we consider the equation
\begin{align}
(\h_2-\lambda_2) v_0 + \h_1  v_1 + (\h_0 - \lambda_0) v_2=0.
\end{align}
Taking scalar products with $v_0$ determines $\lambda_2$,
\begin{align}
\lambda_2= \langle v_0, \h_2 v_0 \rangle + \langle v_0, \h_1 v_1\rangle.
\end{align}
As a function of $\mu_4$ we see that $\lambda_2$ is a polynomial of degree $2$. 
We determine the coefficient to $\mu_4^2$ as
\[
1 - 4 \Bigl\langle \frac{k\rho^3}{6} v_0, 
(\h_0 - \lambda_0)^{-1}_{\text{reg}} \frac{k\rho^3}{6} v_0 
\Bigr\rangle.
\]
From perturbation theory, we recognize this expression as  
$\tfrac{1}{2}\frac{d^2}{d\alpha^2}\eigone{\M(\alpha)}|_{\alpha=0}$, 
which is positive (by 
Theorem~\ref{thm:Non-degenerate} and Proposition~\ref{prop:secdiffzeropos}).

Thus 
\[
\lambda_2(\mu_4)= \frac{c_0}{2} (\mu_4 - C_1)^2 + C_2,
\]
with $c_0>0$ and for suitable constants $C_1, C_2$. We fix
\begin{align}
v_2 = - (\h_0 - \lambda_0)^{-1}_{\text{reg}} \Bigl[ (\h_2-\lambda_2) v_0 + \h_1  v_1 \Bigr].
\end{align}

\begin{proof}[Proof of Theorem~\ref{thm:R2deltazero}]
To complete the proof of Theorem~\ref{thm:R2deltazero} we only need to give the
trial state that gives the right energy. This is done in the same way as it was
done for the complement of the disc in Lemma~\ref{lem:expansion}.
We omit the details, but mention that the trial state is given by (here
$\epsilon=1/100$ and $\chi\in C_0^\infty$ with $\chi(0)=1$)
\[
\tilde{v}(r) = B^{1/8} \chi(B^{\frac{1}{4}-\epsilon}(r-1)) v(B^{1/4}(r-1)),
\]
with $v= v_0 + B^{-1/4} v_1 + B^{-1/2} v_2$ from the calculations above.
\end{proof}

\begin{proof}[Proof of Theorem~\ref{thm:wholeplane}]
From Theorem~\ref{thm:R2deltazero} it follows exactly like in the proof of 
Theorem~\ref{thm:main} that $B \mapsto \eigone{\Ham(B)}$ is not monotone 
increasing on any half-interval of the form $[B_0, \infty)$. This finishes
the proof of Theorem~\ref{thm:wholeplane}.
\end{proof}

\appendix

\section{The de Gennes operator}
\label{sec:dG}
In this section we have collected some known results on the one-dimensional 
self-adjoint operator
\[
\dG(\xi) = -\frac{d^2}{d\rho^2}+(\rho-\xi)^2
\]
in $L^2((0,+\infty))$ with Neumann condition at $\rho=0$.

We denote by $\eigone{\dG(\xi)}$ the lowest eigenvalue of $\dG(\xi)$ and let
$\phi_{\xi}$ denote the (positive, normalized) ground state. 

It is well-known (see for example~\cite{fohebook}) that this eigenvalue has a 
unique minimum 
\[
\Theta_0 = \min_{\xi\in\R}\eigone{\dG(\xi)},
\]
attained at the unique positive 
\[
\xi_0=(\Theta_0)^{1/2}. 
\]
Moreover, this minimum is non-degenerate; its second derivative at this point 
equals $2\xi_0\phi_{\xi_0}(0)^2$. The following momentum formulas hold:

\begin{equation}
\label{eq:momentone}
\begin{aligned}
\langle \phi_{\xi_0},\phi_{\xi_0}\rangle & = 1,
&
\langle \phi_{\xi_0},(\rho-\xi_0)\phi_{\xi_0}\rangle &= 0,\\
\langle \phi_{\xi_0},(\rho-\xi_0)^2\phi_{\xi_0}\rangle 
&= \frac{1}{2}\xi_0^2,\qquad
&
\langle \phi_{\xi_0},(\rho-\xi_0)^3\phi_{\xi_0}\rangle
&= \frac{1}{6}\phi_{\xi_0}(0)^2.
\end{aligned}
\end{equation}
From these formulas we also find
\begin{equation}
\label{eq:momenttwo}
\begin{gathered}
\langle \phi_{\xi_0},\rho\phi_{\xi_0}\rangle = \xi_0,\quad
\langle \phi_{\xi_0},\rho^2\phi_{\xi_0}\rangle = \frac{3}{2}\xi_0^2,
\quad\text{and}\\
\langle \phi_{\xi_0},\rho^3\phi_{\xi_0}\rangle
= \frac{1}{6}\phi_{\xi_0}(0)^2+\frac{5}{2}\xi_0^3.
\end{gathered}
\end{equation}
Moreover, it holds that
\begin{equation}
\label{eq:dphi}
\langle \phi_{\xi_0},\phi_{\xi_0}'\rangle = -\frac{1}{2}\phi_{\xi_0}(0)^2.
\end{equation}

If we denote by $(\dG(\xi_0)-\Theta_0)^{-1}_{\text{reg}}$ the 
regularized resolvent, then a straight forward calculation shows that
\[
(\dG(\xi_0)-\Theta_0)^{-1}_{\text{reg}}
\bigl[(\rho-\xi_0)\phi_{\xi_0}\bigr] = 
-\frac{1}{2}\phi_{\xi_0}'-\frac{1}{4}\phi_{\xi_0}(0)^2\phi_{\xi_0},
\]
and hence (here we use one of the momentum relations above and integration
by parts)
\begin{equation}
\label{eq:seconddG}
\begin{aligned}
1-4\Big\langle (\rho-\xi_0)\phi_{\xi_0},
&(\dG(\xi_0)-\Theta_0)^{-1}_{\text{reg}}
\bigl[(\rho-\xi_0)\phi_{\xi_0}\bigr]\Big\rangle\\
&=
1-4\Big\langle (\rho-\xi_0)\phi_{\xi_0},
-\frac{1}{2}\phi_{\xi_0}'-\frac{1}{4}\phi_{\xi_0}(0)^2\phi_{\xi_0}
\Big\rangle\\
&= 1-4\Big\langle (\rho-\xi_0)\phi_{\xi_0},
-\frac{1}{2}\phi_{\xi_0}'\Big\rangle\\
&=\xi_0\phi_{\xi_0}(0)^2.
\end{aligned}
\end{equation}
In particular this expression is positive.

\section{A Montgomery operator}
\label{sec:Mont}
For $\alpha\in\R$, we define the Montgomery type operator
\[
\M(\alpha)=-\frac{d^2}{d\rho^2}+\Bigl(\frac{\rho^3}{3}-\alpha\Bigr)^2
\]
as a self-adjoint operator on $L^2(\R)$. Let us denote by 
\[
\eig{1}{\M(\alpha)}<\eig{2}{\M(\alpha)}\leq \ldots
\]
the eigenvalues of $\M(\alpha)$, with corresponding eigenfunctions $u_1$, $u_2$ 
and so on.

\begin{thm}[\cite{fope3}]
\label{thm:Non-degenerate}
The function $\alpha\mapsto\eigone{\M(\alpha)}$ has a unique minimum at $\alpha=0$.
Furthermore, the minimum is non-degenerate, i.e. 
\begin{equation}
\label{eq:c0}
c_0:=\frac{d^2}{d\alpha^2}\eigone{\M(\alpha)}\bigr|_{\alpha=0} > 0.
\end{equation}
\end{thm}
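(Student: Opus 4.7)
The plan is to combine a symmetry argument, basic semiclassical coercivity, and perturbative analysis to pin down the minimum at $\alpha = 0$ and its non-degenerate character. First, I would observe that the parity operator $V : u(\rho) \mapsto u(-\rho)$ on $L^2(\R)$ implements $V\,\M(-\alpha)\,V^{-1} = \M(\alpha)$, so $\alpha \mapsto \eigone{\M(\alpha)}$ is even. In particular, the ground state $u_0$ of $\M(0)$ is even, and $\alpha = 0$ is automatically a critical point of $\eigone{\M(\alpha)}$ by the Feynman--Hellmann formula
\[
\frac{d}{d\alpha}\eigone{\M(\alpha)} = -2\Bigl\langle u_\alpha,\Bigl(\frac{\rho^3}{3}-\alpha\Bigr)u_\alpha\Bigr\rangle,
\]
since the integrand at $\alpha = 0$ is odd in $\rho$.

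Next, I would establish coercivity: the potential $(\rho^3/3 - \alpha)^2$ has its minimum at $\rho_\alpha = (3\alpha)^{1/3}$, and a harmonic approximation centered there yields a quadratic well of frequency of order $|\alpha|^{2/3}$. Standard semiclassical estimates give $\eigone{\M(\alpha)} \geq c\,|\alpha|^{2/3}$ for $|\alpha|$ large, so a (global) minimum exists and lies in a bounded interval. Combined with real-analyticity of the simple ground state eigenvalue in $\alpha$, this reduces the uniqueness statement to showing that $\eigone{\M(\alpha)}$ is strictly increasing on $(0,\infty)$. Using the Feynman--Hellmann identity, this is equivalent to $\langle u_\alpha, \rho^3/3 \, u_\alpha\rangle < \alpha$ for all $\alpha > 0$, i.e.\ the ground-state mass must lag behind the classical minimum of the potential. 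One way to proceed is to track the motion of $u_\alpha$ as $\alpha$ increases by differentiating the eigenvalue equation and comparing with the even reference $u_0$, exploiting that tunnelling to the region $\rho < \rho_\alpha$ is energetically favored by the curvature asymmetry of $(\rho^3/3-\alpha)^2$.

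For the non-degeneracy, the main computation is second-order Rayleigh--Schrödinger perturbation theory at $\alpha = 0$. With $\M'(0) = -2\rho^3/3$ and $\M''(0) = 2$,
\[
c_0 = \frac{d^2}{d\alpha^2}\eigone{\M(\alpha)}\Bigr|_{\alpha=0} = 2 - \frac{8}{9}\,\Bigl\langle \rho^3 u_0, \bigl(\M(0) - \eigone{\M(0)}\bigr)^{-1}_{\text{reg}}\,\rho^3 u_0 \Bigr\rangle.
\]
Strict positivity here is the substantive analytic step: the resolvent term is manifestly non-negative (acting on the orthogonal complement of $u_0$), so a delicate cancellation has to be controlled. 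The plan is to introduce $\phi := (\M(0) - \eigone{\M(0)})^{-1}_{\text{reg}}[\rho^3 u_0]$, translate the perturbative expression into an estimate on $\phi$ by using the eigenvalue equation $\M(0) u_0 = \eigone{\M(0)}\,u_0$, and integrate by parts to reveal a positive quadratic form. Since $u_0$ is not explicitly available, the hard part will be producing a \emph{quantitative} lower bound on $c_0$; this is where a numerical evaluation (combined with rigorous interval enclosures on $u_0$ built from the ODE) or an auxiliary variational identity peculiar to the Montgomery operator is typically invoked, and is indeed the content of the proof given in \cite{fope3}.

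The main obstacle is thus the positivity of $c_0$ (and the closely related monotonicity of $\eigone{\M(\alpha)}$ on $(0,\infty)$): everything else is soft (symmetry, coercivity, analyticity), but producing an analytic certificate for the sign of this single combination of matrix elements without explicit formulas for $u_0$ is genuinely non-trivial and is the heart of the argument.
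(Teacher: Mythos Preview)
The paper does not contain a proof of this theorem: it is stated in the appendix as a citation of \cite{fope3}, and nothing beyond the perturbative formula for $c_0$ (Proposition~B.3, recorded without a positivity argument) appears here. So there is no proof in the present paper to compare your proposal against.

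As for the proposal itself: the soft ingredients you list are correct and standard. Parity gives evenness of $\alpha\mapsto\eigone{\M(\alpha)}$ and forces $\alpha=0$ to be critical; the harmonic approximation at $\rho_\alpha=(3\alpha)^{1/3}$ does produce a well of frequency $\sim|\alpha|^{2/3}$, hence coercivity; and your second-order formula for $c_0$ matches exactly what the paper records. But, as you explicitly concede, the two claims that carry all the weight---strict monotonicity of $\eigone{\M(\alpha)}$ on $(0,\infty)$ and the strict inequality $c_0>0$---are not actually argued. The phrases ``tunnelling to the region $\rho<\rho_\alpha$ is energetically favored by the curvature asymmetry'' and ``integrate by parts to reveal a positive quadratic form'' are suggestive but not proofs; in particular, the resolvent term in $c_0$ is non-negative and can in principle be close to $2$, so positivity of the difference genuinely needs a quantitative mechanism. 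What you have written is therefore an accurate roadmap pointing to \cite{fope3} rather than an independent proof, which is consistent with how the present paper treats the result.
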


We introduce the following notation,
\begin{equation}
\label{eq:DefXi}
\Xi:=\eigone{\M(0)}.
\end{equation}

\begin{remark}
By the estimates in~\cite{fope3} we know that
\[
0.618\approx\frac{\sqrt{5}-1}{2}<\Xi<
\frac{2^{3/2}}{9}
\Bigl(\frac{4\pi^6-210\pi^4+4410\pi^2-26775}{7}\Bigr)^{1/4}\approx 0.664. 
\]
A numerical value of $\Xi$, calculated by V. Bonnaillie-No\"el, is $0.66$.
\end{remark}

\begin{prop}\label{prop:secdiffzeropos}
It holds that
\[
\frac{d^2}{d\alpha^2}\eigone{\M(\alpha)}\bigr|_{\alpha=0} 
= 2-8\Bigl\langle \frac{\rho^3}{3}u_1,
\bigl(\M(0)-\Xi\bigr)^{-1}_{\text{reg}}\frac{\rho^3}{3}u_1\Bigr\rangle.
\]
\end{prop}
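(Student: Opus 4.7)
The plan is a direct Rayleigh--Schr\"{o}dinger perturbation calculation, treating $\alpha$ as the perturbation parameter. Expanding the potential, we can write
\[
\M(\alpha) = \M(0) + \alpha V_1 + \alpha^2 V_2, \qquad V_1 = -\tfrac{2\rho^3}{3}, \qquad V_2 = 1,
\]
as quadratic forms on the common form domain. Since $\eigone{\M(\alpha)}$ is a simple, isolated eigenvalue (Theorem~\ref{thm:Non-degenerate}) and the family depends analytically on $\alpha$ (a standard analytic family, since the form domain is $\alpha$-independent and the $\alpha$-dependence is polynomial), the eigenvalue $\alpha \mapsto \eigone{\M(\alpha)}$ and a choice of normalized eigenfunction $\alpha \mapsto u(\alpha)$ are real-analytic near $\alpha = 0$.

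Next, I would compute $\eigone{\M(\alpha)}'|_{\alpha=0}$ by the Feynman--Hellmann formula:
\[
\frac{d}{d\alpha}\eigone{\M(\alpha)}\Big|_{\alpha=0}
= \langle u_1, V_1 u_1 \rangle
= -2 \Big\langle u_1, \tfrac{\rho^3}{3} u_1\Big\rangle.
\]
Since the potential $(\rho^3/3)^2 = \rho^6/9$ of $\M(0)$ is even and the ground state of a one-dimensional Schr\"{o}dinger operator with even potential is even (it is the unique positive normalized eigenfunction), the integrand $\rho^3 u_1^2$ is odd, so this first derivative vanishes. This is consistent with Theorem~\ref{thm:Non-degenerate}.

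For the second derivative, differentiate the eigenvalue equation $(\M(\alpha)-\eigone{\M(\alpha)}) u(\alpha) = 0$ twice at $\alpha=0$, take the inner product with $u_1$, and use that $(\M(0)-\Xi) u_1 = 0$. The standard second-order Rayleigh--Schr\"{o}dinger formula yields
\[
\frac{d^2}{d\alpha^2}\eigone{\M(\alpha)}\Big|_{\alpha=0}
= 2 \langle u_1, V_2 u_1 \rangle
- 2 \bigl\langle V_1 u_1,\, (\M(0)-\Xi)^{-1}_{\text{reg}}\, V_1 u_1\bigr\rangle,
\]
where the regularized resolvent is well-defined on $\{u_1\}^{\perp}$ and $V_1 u_1 = -\tfrac{2\rho^3}{3}u_1$ lies in $\{u_1\}^{\perp}$ by the same parity argument. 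Substituting $V_2 = 1$ and $V_1 = -2\rho^3/3$ and using $\|u_1\|=1$ gives the asserted formula
\[
c_0 = 2 - 8 \Bigl\langle \tfrac{\rho^3}{3}u_1,\, (\M(0)-\Xi)^{-1}_{\text{reg}}\, \tfrac{\rho^3}{3}u_1\Bigr\rangle.
\]

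The only genuinely delicate point is the justification of analytic perturbation theory: $V_1 = -2\rho^3/3$ is unbounded, so one cannot invoke a bounded-perturbation result directly. The cleanest way is to note that the quadratic form of $\M(\alpha)$ has the $\alpha$-independent form domain $\{v \in H^1(\mathbb{R}) : \rho^3 v \in L^2(\mathbb{R})\}$ and depends polynomially on $\alpha$, so this is an analytic family of type~(B) in Kato's sense; the simplicity of $\Xi$ then yields analyticity of the ground state branch on a neighborhood of $0$ and legitimizes the formal manipulations above. With that in hand, the computation itself is purely algebraic.
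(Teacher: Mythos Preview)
Your proof is correct and is exactly the approach the paper intends: the paper's entire proof is the two words ``Perturbation theory,'' and you have spelled out precisely that standard second-order Rayleigh--Schr\"{o}dinger computation, including the justification via analytic families of type~(B) and the parity argument for the vanishing first derivative.
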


\begin{proof}
Perturbation theory.
\end{proof}

\section{Numerical calculations}

The eigenvalues of $\Ham_m(B)$ can be solved explicitly in terms of confluent
hypergeometric functions, and plotted by the computer. Below we include 
a figure with the lowest eigenvalue of the limit operator $\Ang(B)$ and
the lowest eigenvalue of the annulus of inner radius $\Ri=1$ and outer 
radius $\Ro=3/2$.

\begin{figure}[ht]
\centering
\begin{minipage}{.5\textwidth}
  \centering
  \includegraphics[width=.9\linewidth]{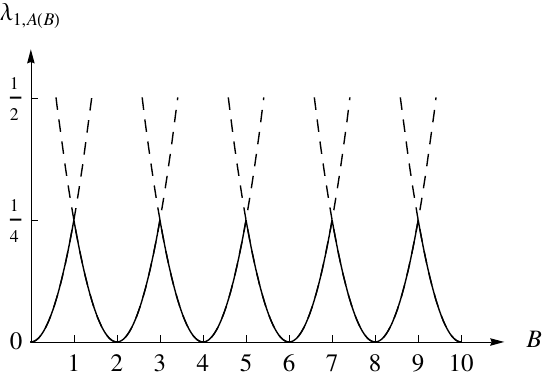}
\end{minipage}%
\begin{minipage}{.5\textwidth}
  \centering
  \includegraphics[width=.9\linewidth]{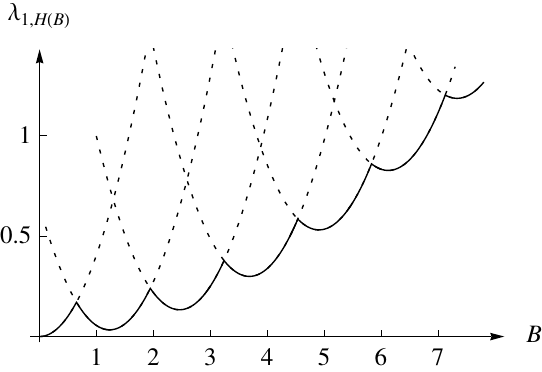}
\end{minipage}
\caption{Left: The eigenvalues of $\Ang(B)$ (dotted) and the lowest eigenvalue 
$\eigone{\Ang(B)}$ (solid) for $0<B<10$ and $\Ri=1$. Right: The lowest eigenvalue $\eigone{\Ham(B)}$ plotted 
for $0<B<8$. The dotted lines are the lowest eigenvalue of $\Ham_m(B)$ for 
$0\leq m\leq 6$, $\Ri=1$ and $\Ro=3/2$.}
\label{fig:angannex}
\end{figure}

\section*{Acknowledgements}
SF was partially supported by the Lundbeck
  Foundation, the Danish Natural Science Research Council and the European 
Research Council under the
 European Community's Seventh Framework Program (FP7/2007--2013)/ERC grant
 agreement  202859.

\def\cprime{$'$}

\end{document}